\theoremstyle{plain}
\newtheorem*{theorem*}{Theorem}
\newtheorem*{remark*}{Remark}
\newtheorem*{example*}{Example}
\newtheorem{lemma}{Lemma}[section]
\newtheorem{proposition}[lemma]{Proposition}
\newtheorem{corollary}[lemma]{Corollary}
\newtheorem{theorem}[lemma]{Theorem}
\newtheorem*{conjecture*}{Conjecture}
\newtheorem{prop}[lemma]{Proposition}
\theoremstyle{definition}
\newtheorem{definition}[lemma]{Definition}
\newtheorem{example}[lemma]{Example}
\theoremstyle{remark}
\newtheorem{remark}[lemma]{Remark}
\newtheorem{notation}[lemma]{Notation}
 \newcommand{\paren}[1]{\left({#1}\right)}
\renewcommand{\Im}{\operatorname{Im}}
\newcommand{\bC}{{\mathbb C}}
\newcommand{\bR}{{\mathbb R}}
\newcommand{\abs}[1]{\left|{#1}\right|}
\def\quotient#1#2{%
    \raise1ex\hbox{$#1$}\Big/\lower1ex\hbox{$#2$}%
}
\newcommand{\floor}[1]{\lfloor #1 \rfloor}
\begin{document}

\date{July 28, 2021.}
\setcounter{tocdepth}{1}
\title[On Schwartz Equivalence of Quasidiscs and Other Planar Domains]{On Schwartz Equivalence of Quasidiscs and Other Planar Domains}
 \author{Eden Prywes and Ary Shaviv}
\address{Department of Mathematics, Princeton University, Princeton, New Jersey 08544}
\email{eprywes@princeton.edu; ashaviv@math.princeton.edu}
\subjclass[2010]{Primary 46A11, Secondary 30C62}

\maketitle

\begin{abstract}
Two open subsets of $\bR^n$ are called Schwartz equivalent if there exists a diffeomorphism between them that induces an isomorphism of Fr\'echet spaces between their spaces of Schwartz functions. In this paper we use tools from quasiconformal geometry in order to prove the Schwartz equivalence of a few families of planar domains. We prove that all quasidiscs are Schwartz equivalent and that any two non-simply-connected planar domains whose boundaries are quasicircles are Schwartz equivalent.  We classify the two Schwartz equivalence classes of domains that consist of the entire plane minus a quasiarc and prove a Koebe-type theorem, stating that any planar domain whose connected components of its boundary are finitely many quasicircles is Schwartz equivalent to a circle domain. We also prove that the notion of Schwartz equivalence is strictly finer than the notion of $C^\infty$-diffeomorphism by constructing examples of open subsets of $\bR^n$ that are $C^\infty$-diffeomorphic and are not Schwartz equivalent.
\end{abstract}

\tableofcontents

\section{Introduction} A real valued $C^\infty$-smooth function on an open subset of $\bR^n$ is called a Schwartz function, if it and all of its partial derivatives rapidly decay when approaching any boundary point of the subset, including $\infty$ if the subset is unbounded.
The space of all Schwartz functions on a given subset $U\subset\bR^n$ is a Fr\'echet space denoted by $\mathcal{S}(U)$, and is called the Schwartz space of $U$.
Schwartz spaces were first introduced on $\bR^n$ by Laurent Schwartz (see \cite{schwartz}) and throughout the years were defined and studied in various contexts on various objects, e.g., semi-simple/reductive Lie groups \cite{harishchandra,arthur,arthur2,casselman,casselmanhechtmili}, Nash ($C^\infty$-smooth semi-algebraic) manifolds \cite{dC,AG},
smooth semi-algebraic stacks \cite{sakellaridis}, (possibly singular) algebraic varieties \cite{ES} and $C^\infty$-smooth manifolds definable in polynomially bounded o-minimal structures \cite{Shaviv}. First introduced in the first half of the $20^\text{th}$ century, Schwartz spaces still play an important role in many fields of mathematics, such as Harmonic Analysis, Representation Theory \cite{GSS} and Number Theory \cite{Ge,GHL,CG}.

\subsection*{Historic motivation.} Originally, Schwartz spaces arose in Functional Analysis, in the context of the Fourier transform. Let us briefly recall the simplest application. Consider the Fourier transform on the real line, given by the formula 
 \[
 \mathcal{F}(f)(\omega):=\int\limits_{-\infty}^{\infty}f(x)e^{-2\pi i x \omega}dx.
 \]
 A straightforward calculation shows that 
 \[
 \mathcal{F}(\frac{d}{dx}f(x))(\omega)=2\pi i \omega\mathcal{F}(f(x))(\omega)
 \]
 and
 \[
 \mathcal{F}(x
 f(x))(\omega)=\frac{-1}{2\pi i} \frac{d}{d\omega}\mathcal{F}(f(x))(\omega).
 \]
 
Intuitively, these relations imply that if $\mathcal{F}(f)(\omega)$ is differentiable $k$ times, then $f(x)$ is integrable even after being multiplied by the monomial $x^k$. So $f$ decays at $\infty$ faster than $x^{-k}$, and vise-versa. We thus think of the Fourier transform as interchanging the property of being ``differentiable to a high order" and the property of ``fast decay at $\infty$". Rigorously, one can show that the Fourier transform given by the formula above is an automorphism on the space of (complex valued) Schwartz functions on the real line.

A tempered distribution on the real line is a continuous linear functional on the Schwartz space of the real line. By duality the Fourier transform is defined on the space of tempered distributions. If $\xi$ is a tempered distribution, then its Fourier transform is defined by $\mathcal{F}(\xi)(s):=\xi(\mathcal{F}(s))$, for any Schwartz function $s$. 
Many functions naturally define tempered distributions by integration, i.e., for any Schwartz function $s$, $(f,s):=\int\limits_{-\infty}^{\infty}f(x)s(x)dx$, whenever this integral makes sense. In particular, Schwartz functions, polynomials, compactly supported continuous functions and trigonometric polynomials define tempered distributions. Not all tempered distributions arise by this integration process, e.g., the Dirac delta distribution at the origin defined by $(\delta_0,s):=s(0)$, for any Schwartz function $s$. Having said that, Schwartz's theorem asserts that every tempered distribution is a derivative  of finite order (in the distributional sense) of some continuous function of polynomial growth (see \cite[Theorem 8.3.1]{Fr}). This approach gives a wide rigorous framework to understand the notion of conjugate variables in Quantum Mechanics. The Fourier transform of the Dirac delta distribution, for instance, is a constant function. 
\subsection*{The problem of Schwartz equivalence.}Two open subsets of $\bR^n$ are called Schwartz equivalent if there exists a $C^\infty$-diffeomorphism between them that induces, by composition, an isomorphism of Fr\'echet spaces between their Schwartz spaces.
Not every $C^\infty$-diffeomorphism of open subsets of $\bR^n$ induces such an isomorphism, as illustrated by the following example.

\begin{example}\label{example-of-all-evil}
The map $\operatorname{exp}\colon\mathbb{R}\to \bR_{>0}$ is a $C^\infty$-diffeomorphism whose inverse is the natural logarithm.
Take a $C^\infty$-smooth function $f\colon\mathbb{R}\to\bR$ such that $f(x)=0$
for any $x<0$ and $f(x)=e^{-x}$ for any $x>1$.
Such an $f$ clearly exists and $f\in\mathcal{S}(\mathbb{R})$.
Then, for any $y>e$, we have $$f(\log(y))=e^{-\log(y)}=\frac{1}{y},$$ and so $f\circ \log\notin\mathcal{S}(\bR_{>0})$. 
\end{example}

Thus, a natural problem is to determine under what conditions two open subsets $U,V\subset \bR^n$ are Schwartz equivalent.  A necessary condition is that $U$ and $V$ are isomorphic as $C^\infty$-manifolds. It was not clear whether this condition is also sufficient and in this paper we prove it is not. In fact, the notion of Schwartz equivalence is implicitly used in most of the theories mentioned above, e.g., the space of tempered distributions on a Nash manifold is well defined and so may be studied, only due to the fact that any semi-algebraic $C^\infty$-diffeomorphism induces a Schwartz equivalence. For other instances in which the question of Schwartz equivalence is implicitly addressed see \cite[p. 18]{CG},\cite[Corollary 4.9, Lemma 6.1]{Shaviv} and \cite[Lemmas 3.6(i), 5.1]{ES}.

In the main part of this paper we use tools from quasiconformal geometry, where the main objects of study are quasiconformal and quasisymmetric maps, to prove the Schwartz equivalence of a few families of open subsets of the plane (we naturally identify $\bC$ with $\bR^2$).
Quasisymmetries are generalizations of conformal maps on $\bC$.  Recall that a conformal map is a diffeomorphism that infinitesimally maps circles to circles.
Quasisymmetries (on $\bC$ these are the same as quasiconformal maps) can be defined as maps that infinitesimally send circles to ellipses that have bounded eccentricity.
While the only conformal maps from $\bC$ to $\bC$ are affine linear maps, there are many more quasisymmetries.
In fact, any bi-Lipschitz map is a quasisymmetry.

The classes of domains that we will study below will often be defined as having boundaries that are quasicircles.
A quasicircle is a closed subset of $\bC$ that is an  image of either the unit circle or $\bR$ under a quasisymmetric map from $\bC$ to itself. Equivalently, one may define a quasicircle as a closed set in $\bC$ whose closure inside $\widehat{\mathbb{C}}$ is an image of the unit circle under a quasisymmetric map from $\widehat{\mathbb{C}}$ to itself. In this paper we prefer the former approach and so whenever possible we will work in the complex plane rather than in the Riemann sphere. A quasidisc is a simply-connected domain in $\bC$ whose boundary is a quasicircle.
In \cite{ahlfors63}, Ahlfors gave an entirely geometric condition that characterizes the planar topological circles that are quasicircles.
This, in particular, showed that bounded quasicircles coincide with planar topological circles that have no zero-angle cusps.
He also showed that a conformal map between two simply-connected domains whose boundaries are quasicircles extends to a quasisymmetry of $\bC$ (we will use this fact often).
This contrasts sharply with the conformal case where if the boundary of a domain is not analytic, then a conformal map from the domain to a disc cannot be extended to a conformal map beyond the boundary.

We do not attempt to give a thorough introduction to the theory of quasiconformal maps.
In Section \ref{chapter-prelim}, we provide all the necessary definitions, most importantly, that of a quasisymmetry, a quasicircle and a quasidisc.  We also review all the results from the theory that we will use.  For references regarding quasiconformal maps and quasicircles see \cite{lehtovirtanen, ahlfors,AIM} and \cite{GH}. 

In the second part of this paper we explore examples outside the quasiconformal realm. In particular we show the Schwartz equivalence of some exponential cusp domains to the unit disc. We also give sufficient conditions for countable unions of intervals in the real line to be Schwartz equivalent and sufficient conditions for countable unions of intervals in the real line to be \emph{non}-Schwartz equivalent. By doing so we prove that the Schwartz equivalence relation is strictly finer than the $C^\infty$-diffeomorphic equivalence relation, already in dimension 1. In the plane we construct an example of \emph{simply-connected} sets that are not Schwartz equivalent, despite being $C^\infty$-diffeomorphic by the Riemann mapping theorem.  

\subsection*{ Main results.} 
The main results of this paper are as follows:
\begin{enumerate}
\item Any two quasidiscs are Schwartz equivalent (Theorem \ref{theorem-quasidiscs-are-schwartz-equivalent}).
\item If $U\subset\bC$ is an unbounded domain whose boundary is a bounded quasicircle, then $U$ is Schwartz equivalent to $\bC\setminus\overline {\mathbb D}$ (Theorem \ref{thm-unbouded-domains-whose-boundaries-are-bounded-quasicircles}).
\item If $U\subset \bC$ is a domain whose boundary is a quasiarc, then $U$ is Schwartz equivalent to $\mathbb D$ when $\partial U$ is unbounded (Theorem \ref{thm-infinite-slit-domains}) and to $\bC\setminus \overline{\mathbb D}$ when $\partial U$ is bounded (Theorem \ref{thm-slit-domains}).
\item If $U \subset \bC$ is a domain whose boundary consists of connected components that are at most countably many points and finitely many quasicircles, out of which at most one quasicircle is unbounded, then $U$ is Schwartz equivalent to a circle domain (Theorem \ref{schwartzkoebe}).
\item There exist pairs of open subsets of $\bR^n$ that are $C^\infty$-diffeomorphic but not Schwartz equivalent. This holds already in $\bR^1$ (Example \ref{example_countable_union_non_equiv}), and in $\bR^2$ such simply-connected pairs exist (Section \ref{nazarov}).  
\end{enumerate}

In \cite[Appendix C]{Shaviv}, the
second author conjectured that $C^\infty$-diffeomorphic open subsets are not necessarily Schwartz equivalent.
More precisely, the author conjectured that if $U,V\subset\bR^n$ are Schwartz equivalent and the Hausdorff dimension of $\partial U$ is strictly greater than $n-1$, then the Hausdorff dimension of $\partial V$ equals the Hausdorff dimension of $\partial U$.
The motivation for this conjecture was the following theorem.
\begin{theorem}[{\cite[Theorem 7.3]{Shaviv}}] For any open $U\subset\bR^n$ there exists $f\in\mathcal{S}(U)$
such that $f(x)>0$ for any $x\in U$.
\end{theorem}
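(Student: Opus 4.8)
The plan is to exhibit an explicit positive element of $\mathcal S(U)$ as a product of two ``flat'' Gaussian-type factors: one that forces all derivatives to decay rapidly as $x\to\infty$, and one that forces all derivatives to decay rapidly as $x$ approaches the finite part of $\partial U$. We may assume $\emptyset\neq U\subsetneq\bR^n$, since $U=\emptyset$ is vacuous and for $U=\bR^n$ one takes $f(x)=e^{-\|x\|^2}$. Let $\rho(x):=\operatorname{dist}(x,\bR^n\setminus U)$, which is positive and $1$-Lipschitz on $U$ but only Lipschitz, not smooth; so first I would pass to a \emph{regularized distance} $\delta\in C^\infty(U)$ — a classical construction via a Whitney decomposition of $U$ into dyadic cubes together with a subordinate partition of unity — satisfying $c_1\rho(x)\le\delta(x)\le c_2\rho(x)$ and $|\partial^\alpha\delta(x)|\le C_\alpha\,\rho(x)^{1-|\alpha|}$ for every multi-index $\alpha$. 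Then I would set
\[
f(x):=\exp\!\Big(-\tfrac1{\delta(x)}\Big)\cdot\exp\big(-\|x\|^2\big),\qquad x\in U .
\]
This $f$ is positive and $C^\infty$ on $U$ by construction, so the whole content is the verification that $f\in\mathcal S(U)$.

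For that verification, write $g=e^{-1/\delta}$ and $h=e^{-\|x\|^2}$, so that $\partial^\alpha f=\sum_{\beta\le\alpha}\binom\alpha\beta\,\partial^\beta g\,\partial^{\alpha-\beta}h$. Every derivative of $h$ is a polynomial times $h$, hence lies in $\mathcal S(\bR^n)$. For $g$ I would run a Faà di Bruno expansion: from the regularized-distance estimates one gets $|\partial^\gamma(1/\delta)|\le C_\gamma\,\rho^{-1-|\gamma|}$, and combining these in the Bell-polynomial expansion of $\partial^\beta(e^{-1/\delta})$ yields a bound of the shape $|\partial^\beta g(x)|\le C_\beta\,P_\beta\!\big(\rho(x)^{-1}\big)\,e^{-1/(c_2\rho(x))}$ for a polynomial $P_\beta$. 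Since $t\mapsto e^{-1/(c_2 t)}$ vanishes to infinite order at $t=0^+$ and is bounded for $t$ bounded away from $0$, it follows that each $\partial^\beta g$ is bounded on $U$ and, more precisely, that $(1+\rho(x)^{-1})^N\,|\partial^\beta g(x)|$ is bounded for every $N$. Multiplying this family of bounds with the Schwartz bounds for the $\partial^{\alpha-\beta}h$ (and using $(1+a+b)^N\le(1+a)^N(1+b)^N$) shows that all Schwartz seminorms of $f$ on $U$ are finite; the same estimates show $\partial^\alpha f(x)=o(\operatorname{dist}(x,\partial U)^k)$ for all $k,\alpha$, so $f$ extended by $0$ is genuinely $C^\infty$ on $\bR^n$, lies in $\mathcal S(\bR^n)$, and vanishes to infinite order on $\bR^n\setminus U$.

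The only real work is the derivative estimate for $g=e^{-1/\delta}$, and I expect the main obstacle to be purely organizational: arranging the Faà di Bruno / Bell-polynomial expansion so that the bounds $|\partial^\gamma\delta|\lesssim\rho^{1-|\gamma|}$ collapse to $|\partial^\beta g|\lesssim (\text{polynomial in }\rho^{-1})\,e^{-1/(c_2\rho)}$ — after which flatness of $t\mapsto e^{-1/t}$ at $0$ does all the rest. (An essentially equivalent and perhaps cleaner packaging avoids naming the regularized distance: use Whitney's theorem to produce $\psi\in C^\infty_b(\bR^n)$ with $\psi\ge 0$ and $\psi^{-1}(0)=\bR^n\setminus U$; since $\psi$ is Lipschitz and vanishes on $\bR^n\setminus U$ one has $\psi\le L\rho$, which is enough to run the identical argument with $g=e^{-1/\psi}$, after which $f=g\cdot e^{-\|x\|^2}$ is a product of a $C^\infty_b$ function with a Schwartz function, hence in $\mathcal S(\bR^n)$, and it vanishes to infinite order on $\bR^n\setminus U$ and is positive on $U$.)
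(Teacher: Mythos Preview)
The paper does not prove this statement; it is quoted from \cite{Shaviv} in the introduction as motivation and is never re-proved in the text, so there is no ``paper's own proof'' to compare against.

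Your construction is the standard one and is correct. The Stein regularized distance $\delta$ with $c_1\rho\le\delta\le c_2\rho$ and $|\partial^\alpha\delta|\le C_\alpha\rho^{1-|\alpha|}$ exists, the Fa\`a di Bruno estimate $|\partial^\beta(e^{-1/\delta})|\le C_\beta\,P_\beta(\rho^{-1})e^{-1/(c_2\rho)}$ follows exactly as you say, and flatness of $t\mapsto e^{-1/t}$ at $0^+$ then gives that $g:=e^{-1/\delta}$ has all derivatives bounded on $U$ and vanishing to infinite order at $\partial U$. The only place to add a sentence is the passage from these pointwise bounds on $U$ to ``the zero extension of $g$ is in $C^\infty(\bR^n)$'': you are implicitly using the elementary lemma that a continuous function which is $C^1$ off a closed set and whose gradient extends continuously across that set is $C^1$ everywhere, iterated to all orders. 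Once $g\in C^\infty_b(\bR^n)$ is established, $f=g\cdot e^{-\|x\|^2}$ is Schwartz on $\bR^n$, flat on $\bR^n\setminus U$, and strictly positive on $U$, which is exactly Definition~\ref{def-schwartz-on-open}.

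Your parenthetical alternative via a Whitney function $\psi$ is indeed cleaner, but note that the classical statement ``every closed set is the zero set of a nonnegative $C^\infty$ function'' does not automatically give $\psi\in C^\infty_b$; you still need a construction with controlled derivatives (which, of course, the regularized distance already provides).
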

Thus, the Schwartz space should ``detect" the boundary of the subset.
We disprove this more precise conjecture for any $n\geq1$. For any $n\geq2$ we have a simply-connected example: for $n=2$ the unit disc $\mathbb D$
and the bounded planar domain
defined by the Koch snowflake, denoted $U_{\text{Koch}}$ (see Example \ref{examples} (2) below), are quasidiscs and so Schwartz equivalent by Theorem \ref{theorem-quasidiscs-are-schwartz-equivalent}. The boundary of the first is
the unit circle and so has Hausdorff dimension 1, while the boundary of the latter
is the the Koch snowflake, that has Hausdorff dimension $\frac{\log
4}{\log 3}$.  In fact, for any number $d$ so that $1 \le d < 2$, there exists a quasidisc whose boundary has Hausdorff dimension $d$.
 For $n\geq3$ one can easily show that 
 \[
 \mathcal{S}(\mathbb D\times (0,1)^{n-2})\cong\mathcal{S}(U_{\text{Koch}}\times (0,1)^{n-2})
 \]
 using the fact that $\mathcal{S}(\mathbb D)\cong\mathcal{S}(U_{\text{Koch}})$.
The boundaries of these two sets have different Hausdorff dimensions. 
For $n=1$, we have (by Example \ref{example_cantor}) that the complement to the standard $\frac{1}{3}$-Cantor set in $[0,1]$ is Schwartz equivalent to $\bigcup\limits_{n\in\mathbb{N}}(n,n+1)$. The boundary of the latter is countable and so has Hausdorff dimension 0, while the boundary of the first is the standard $\frac{1}{3}$-Cantor set, which has Hausdorff dimension $\frac{\log 2}{\log 3}$.

However, we also prove that a weaker version of this conjecture does  hold, namely that $C^\infty$-diffeomorphic open subsets are
not necessarily Schwartz equivalent. We provide two explicit examples of such sets, one in $\bR^1$ (see Example \ref{example_countable_union_non_equiv}) and the other in $\bR^2$ (see Section \ref{nazarov}). Similar examples may be constructed in higher dimensions.

\subsection*{Bi-H\"older domains.} A key property that is used in proving Theorem \ref{theorem-quasidiscs-are-schwartz-equivalent} is the fact that any bounded quasidisc is a bi-H\"older domain, i.e., it is conformally equivalent to the unit disc via a bi-H\"older map. The first main claim we prove is that any bi-H\"older conformal map between bounded domains induces a Schwartz equivalence (Theorem \ref{conformalbiholderbetweenbounded}). Thus, in particular any two bi-H\"older domains are Schwartz equivalent.

Bi-H\"older domains were characterized in a work of N\"akki and Palka \cite{NP}.
We omit the condition and refer readers to the paper.  However, it is interesting
to note that the condition is entirely geometric and does not depend on the
choice of the conformal map.

It is also important to point out that bi-H\"older domains are not the same
as H\"older domains.
A H\"older domain is a domain in $\bC$ where the conformal map from the disc onto it
is H\"older continuous (see \cite[p. 92]{Po}).
For example, the image of $\mathbb D$ by $e^{\pi z}$ has that $\phi(z) = e^{\pi z}$ is H\"older continuous
but $\phi^{-1}(z) =\frac{1}{\pi}\operatorname{Log}(z) $ is not (for a discussion see \cite[Example 1]{NP}). 
Conversely, the map of the unit disc to any simply-connected domain with
an inward, zero-angle cusp will not be H\"older continuous, while the inverse
will satisfy a H\"older continuity condition near the cusp.

\subsection*{Definable domains.} A different approach to prove the Schwartz equivalence of open subsets was implemented in \cite{Shaviv} and involves tools
from model theory. Let us briefly recall the main idea. Let $\mathcal{R}$ be a polynomially bounded o-minimal structure (see \cite[Subsections 2.1 and 2.2]{Shaviv}) and let $U,V\subset\bR^n$ be two open subsets.
Assume $\phi\colon U\to V$ is a $C^\infty$-diffeomorphism and that moreover $\phi$ is
definable in $\mathcal{R}$. In particular, both $U$ and
$V$ are also definable in $\mathcal{R}$. Then Corollary 4.9 in \cite{Shaviv}
implies that $\phi^*|_{\mathcal{S}(V)}\colon\mathcal{S}(V)\to\mathcal{S}(U)$ is
an isomorphism of Fr\'echet spaces and so $U$ and $V$ are Schwartz equivalent.

The simplest case of a polynomially bounded o-minimal structure is the semi-algebraic
category.
Loosely speaking, sets (resp. maps) definable in this structure
are those sets (maps) that may be described using finitely many polynomial
equations and inequalities.
In the special case of $n=2$, two open semi-algebraic
subsets of $\bR^2$ are $C^\infty$-diffeomorphic if and only if there exists
a semi-algebraic $C^\infty$-diffeomorphism between them.
This follows immediately from \cite[Corollary 3]{Shi1} (see also \cite[Remark B.1.1]{Shaviv}).
This, together with the Riemann mapping theorem, implies that any semi-algebraic, simply-connected, proper, open subset of $\bR^2$ is Schwartz equivalent to the unit disc.
Moreover, it is well known that there is semi-algebraic $C^\infty$-diffeomorphism from $\bR^2$ to the unit disc and therefore any two semi-algebraic simply-connected subsets of $\bR^2$ are Schwartz equivalent. 
Additionally, this approach, together with Koebe's theorem (see Theorem \ref{thm-koebe} below), shows that any open semi-algebraic subset of $\bR^2$ is Schwartz equivalent to a circle domain (an open connected subset of the plane whose connected components of its boundary are all points or circles). 

It should be stressed that neither one of the two approaches described, quasiconformal geometric and model theoretic, is stronger than the other.
The bounded domain defined by the Koch snowflake is not definable in any o-minimal structure but is a quasidisc. On the other hand, the set $$\{(x,y)\in\bR^2|x^2>y^3\}$$
(an outwards facing algebraic cusp) is neither a quasidisc nor a bi-H\"older domain. However, it is semi-algebraic.

The infinite strip $U = \{z \in \bC
: 0<\Im(z)<\pi\}$, even though it is not a Jordan
domain, serves as a good example of the failure of the Riemann map to preserve
the Schwartz space (see Example \ref{examples} (5)).  The set $U$ is semi-algebraic and simply-connected and so $\mathcal{S}(U) \cong \mathcal{S}(\mathbb D)$.  In this case, one can
explicitly compute the Riemann map.  The set U is mapped to the upper half
plane by $e^{z}$, which in turn is mapped to the unit disc by a M\"obius transformation.
As $e^z$ does not preserve the Schwartz space (compare to Example \ref{example-of-all-evil}) and the M\"obius transformation does, their composition fails to preserve the Schwartz space.

\subsection*{Other domains.}The  quasiconformal
geometric and model theoretic approaches do not exhaust all interesting cases of Schwartz equivalent sets. For instance, a special case of our result in Section \ref{cusp_domains} is  that the unit disc is Schwartz equivalent to the set  $$\{(x,y) \in\mathbb{R}^2 | 0 < x < \infty, 0 < y < e^{-x}\}$$ (a one-sided infinite strip with an exponential cusp at infinity). This set is  neither a quasidisc nor a bi-H\"older
domain and it is not definable in any polynomially bounded o-minimal structure.

\subsection*{Structure of this paper.} In Section \ref{chapter-prelim}, we collect all the preliminary definitions and results that will be used in what follows.  These mainly are from the theory of Schwartz functions and from quasiconformal geometry. 

Section \ref{section-on-bounded-quasidiscs} is devoted to proving that any two bounded quasidiscs are Schwartz equivalent. This is achieved by showing that conformal bi-H\"older maps
between bounded domains always induce isomorphisms of Fr\'echet spaces between
the corresponding Schwartz spaces and the fact that the Riemann map from a bounded quasidisc to the unit disc is always such a map. 

In Section \ref{section-on-unbounded-quasidiscs}, we study unbounded domains whose boundaries are quasicircles. These come in two types, simply-connected and non-simply-connected.
A simply-connected, unbounded domain whose boundary is a quasicircle is a quasidisc and we prove it is always Schwartz equivalent to any bounded quasidisc. This is done by showing that M\"obius transformations always induce Schwartz equivalence, and finishes the proof of Theorem \ref{theorem-quasidiscs-are-schwartz-equivalent}.
The second type consists of non-simply-connected, unbounded domains whose boundaries are quasicircles, and we prove that any two such sets are Schwartz equivalent (Theorem \ref{thm-unbouded-domains-whose-boundaries-are-bounded-quasicircles}).

In Section \ref{chapter-complements-to-a-quasiarc}, we study sets whose boundaries are quasiarcs.
We show that any such set is Schwartz equivalent to either the unit disc (in case it is simply-connected, i.e., its boundary is unbounded), or to $\bC\setminus\overline{\mathbb{D}}$ (in case it is not simply-connected, i.e., its boundary is bounded). 

In Section \ref{section-koebeschwartz}, we extend the previous results to domains whose boundaries are composed of finitely many quasidiscs and countably many points.

In Section \ref{nazarov}, we construct two simply-connected planar domains that are $C^\infty$-diffeomorphic but are not Schwartz equivalent.

In Section \ref{cusp_domains}, we prove the Schwartz equivalence of some exponential
cusp domains to the unit disc.

Finally, in Section \ref{line_examples}, we present some examples of Schwartz spaces of subsets of the real line that consist of countable unions of intervals. 

\subsection*{Notation}
Most of the notation in the paper is standard.  We set $\mathbb{N}_0:=\mathbb{N}\cup\{0\}$, i.e., $\mathbb{N}_0$ is the set of all non-negative integers.

By the notion smooth we always mean $C^\infty$-smooth. If $U\subset\bR^n$ is an open subset, $f\colon U\to\bR$ is a smooth function and $k_1,\dots,k_n\in\mathbb{N}_0$, we use multi-index notation for derivatives. So if $k:=(k_1,k_2,\dots,k_n)$, then $\abs{k}:=\sum\limits_{i=1}^{n}k_i$ and
\[
f^{(k)}=\frac{\partial^{|k|}f}{\partial x_1^{k_1}\partial x_2^{k_2}\cdots\partial x_n^{k_n}},
\]
when $\abs{k}\neq0$ and $f^{(k)}=f$ when $\abs{k}=0$. 
If $f:=(f_1,\dots,f_m)\colon U\to\bR^m$ is a smooth function and $k\in(\mathbb{N}_0)^n$ is a multi-index, then we set $f^{(k)}:=(f_1^{(k)},\cdots,f_m^{(k)})$, where $f^{(k)}_i$ denotes the partial derivative of the $i^{\text{th}}$ component function.
For any $x\in U$ we denote $x^k:=\prod\limits_{i=1}^{n}x_i^{k_i}$, where $x_i$ is the $i^{\text{th}}$ coordinate of $x$.

We denote the standard Euclidean norm of a point $x \in \bR^n$ or $\bC$ as $|x|$.  For any two subsets $Z_1,Z_2 \subset \bR^n$ or $\bC$, we set $\operatorname{dist}(Z_1,Z_2):=\inf\limits_{z_1\in Z_1,z_2\in Z_2}\abs{z_1-z_2}$.
For $Z \subset \bR^n$ or $\bC$, we set $\operatorname{dist}(x,Z):=\operatorname{dist}(\{x\},Z)$ and $\operatorname{diam}(Z):=\sup\limits_{x,y\in Z}|x-y|$.

For $x \in \bR^n$ or $\bC$, we set $B(x,R)$ to be the open ball centered at $x$ of radius $R>0$. We denote $\mathbb{D} = B(0,1)$ to be the standard open unit disc either in $\bC$ or in $\bR^2$ (the embedding space will be clear from the context).
By $\widehat \bC$ we denote the extended complex plane (its one point rational compactification, i.e., the Riemann sphere).
A domain in $\bC$ is an open and connected set.
We often treat functions from the complex plane to itself as if they were functions from $\bR^2$ to itself.

When $X$ is any set and $Y\subset X$ is any subset, we denote by $\operatorname{Ext}_Y^X$ the ``extension by zero" operator that takes a real valued function on $Y$ and returns a real valued function on $X$ as follows. For any $f\colon Y\to\mathbb{R}$,
\[   
\operatorname{Ext}_Y^X(f)(x) := 
     \begin{cases}
       f(x) &\quad\text{if } x \in Y, \\
       0 &\quad\text{if } x \in X \setminus Y .\\

     \end{cases}
\]

\subsection*{Acknowledgments.}  E. P. would like to thank Mario Bonk for discussions on the paper. A. S. is grateful to Alexandre Eremenko, Bo'az Klartag, and Dmitry Novikov for valuable discussions related to the subject this paper addresses, and to Charles Fefferman for his great support throughout the past 2 years, and for many interesting discussions. E. P. and A. S. are thankful to Fedor Nazarov and Mikhail Sodin for bringing the example presented in Section \ref{nazarov} to their attention. E. P. was supported by
the Simons Foundation Algorithms and Geometry Collaboration. A. S. was supported by AFOSR Grant FA9550-18-1-069.

\newpage

\section{Preliminaries}\label{chapter-prelim}

\subsection*{Schwartz functions}\label{prelim-schwartz}\begin{definition}A \textit{Schwartz function} on $\bR^n$ is a smooth
function $f\colon\bR^n\to\bR$ such that for any two multi-indices $k,l\in(\mathbb{N}_0)^n$,
$\sup\limits_{x\in\bR^n}\abs{x^l f^{(k)}(x)}<\infty$. The space
of all Schwartz functions on $\bR^n$ is denoted by $\mathcal{S}(\bR^n)$.\end{definition}

\begin{proposition}[e.g., {\cite[Corollary 4.1.2]{AG}}]

$\mathcal{S}(\bR^n)$
has a natural structure of a Fr\'echet space (a metrizable, complete locally
convex topological vector space), where the topology is given
by the family of semi-norms indexed by $(\mathbb{N}_0)^n\times(\mathbb{N}_0)^n$,
$$\|f\|_{k,l}:=\sup\limits_{x\in\bR^n}\abs{x^l f^{(k)}(x)}.$$

\end{proposition}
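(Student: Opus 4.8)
The plan is to verify, in turn, the three ingredients in the definition of a Fréchet space: that the quantities $\|\cdot\|_{k,l}$ form a separating family of seminorms on $\mathcal{S}(\bR^n)$, that the topology they generate is metrizable, and that the resulting metric space is complete. The first ingredient is essentially a tautology. For each pair $k,l\in(\mathbb{N}_0)^n$ the number $\sup_{x\in\bR^n}\abs{x^l f^{(k)}(x)}$ is finite precisely because $f$ is a Schwartz function, so $\|\cdot\|_{k,l}$ is a well-defined map $\mathcal{S}(\bR^n)\to[0,\infty)$; absolute homogeneity and the triangle inequality follow at once from the corresponding properties of $\abs{\cdot}$ and of the supremum. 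The family is separating since $\|f\|_{0,0}=\sup_x\abs{f(x)}=0$ already forces $f\equiv 0$.

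Since the index set $(\mathbb{N}_0)^n\times(\mathbb{N}_0)^n$ is countable, the general theory of locally convex spaces immediately gives that the topology generated by the $\|\cdot\|_{k,l}$ is locally convex and, being Hausdorff by the previous paragraph, metrizable: after fixing an enumeration $\|\cdot\|_1,\|\cdot\|_2,\dots$ of the seminorms one may take $d(f,g):=\sum_{j\ge 1}2^{-j}\frac{\|f-g\|_j}{1+\|f-g\|_j}$, and a standard check shows $d$ is a translation-invariant metric inducing the same topology, in which a sequence converges to $g$ if and only if $\|f_m-g\|_{k,l}\to 0$ for every $k,l$. So everything reduces to completeness.

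For completeness, let $(f_m)$ be a Cauchy sequence. Taking $l=0$, for each multi-index $k$ the sequence $(f_m^{(k)})$ is Cauchy in the uniform norm on $\bR^n$, hence converges uniformly to a continuous function $g_k$. The classical theorem that the uniform limit of the derivatives equals the derivative of the (pointwise) limit — applied one coordinate direction at a time and inductively on $\abs{k}$ — shows that $g:=g_0$ is smooth with $g^{(k)}=g_k$ for all $k$. To see $g\in\mathcal{S}(\bR^n)$, note each real sequence $(\|f_m\|_{k,l})_m$ is Cauchy, hence bounded by some $C_{k,l}$; letting $m\to\infty$ in $\abs{x^l f_m^{(k)}(x)}\le C_{k,l}$ and using pointwise convergence yields $\abs{x^l g^{(k)}(x)}\le C_{k,l}$, so all seminorms of $g$ are finite. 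Finally, fix $k,l$ and $\varepsilon>0$, choose $N$ with $\|f_m-f_{m'}\|_{k,l}<\varepsilon$ for $m,m'\ge N$, fix $m\ge N$ and $x\in\bR^n$, and let $m'\to\infty$ in $\abs{x^l\paren{f_m^{(k)}(x)-f_{m'}^{(k)}(x)}}<\varepsilon$; since $f_{m'}^{(k)}(x)\to g^{(k)}(x)$ this gives $\abs{x^l\paren{f_m^{(k)}(x)-g^{(k)}(x)}}\le\varepsilon$ for all $x$, so $\|f_m-g\|_{k,l}\le\varepsilon$ for all $m\ge N$; as $k,l$ were arbitrary, $f_m\to g$ in the topology.

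The only genuine subtlety — everything else being formal bookkeeping with seminorms — is the interchange of limit and differentiation in the completeness step: one must invoke the correct elementary statement (pointwise convergence of the functions together with uniform convergence of their first partial derivatives forces differentiability of the limit, with the expected partials) and run the induction over $\abs{k}$ and over the $n$ coordinate directions carefully. An alternative phrasing of the whole argument is to realize $f\mapsto \paren{x\mapsto x^l f^{(k)}(x)}_{k,l}$ as an embedding of $\mathcal{S}(\bR^n)$ into a countable product of copies of the Banach space of bounded continuous functions on $\bR^n$ and to check that the image is closed, but verifying closedness again comes down to exactly the same differentiation-under-limits point, so the direct argument above is the most economical route.
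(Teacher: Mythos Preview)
Your proof is correct and is the standard textbook argument; the paper, however, gives no proof of this proposition at all --- it is stated with a citation (to \cite[Corollary 4.1.2]{AG}) as a known fact and used as background. So there is nothing to compare: you have supplied a self-contained verification where the authors simply invoke the literature.
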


\begin{definition}

Let $U\subset\bR^n$ be an open subset and let $z\in
U$ be some point. We say that a smooth function $f\colon U\to\bR$ is \textit{flat at $z$}
if $T_z(f)$, its Taylor series at $z$, is identically zero. If $Z\subset U$ is any subset,
we say that $f$ is \textit{flat} in $Z$ if for all $z \in Z$ it is flat at $z$.

\end{definition}

\begin{definition}\label{def-schwartz-on-open}

Let $U\subset\bR^n$ be an open subset. Define the \textit{space of Schwartz
functions} on $U$, $$\mathcal{S}(U):=\bigcap\limits_{z\in \bR^n\setminus
U}\bigcap\limits_{k\in(\mathbb{N}_0)^n}\{f\in\mathcal{S}(\bR^n)|f^{(k)}(z)=0\}.$$
\end{definition}
As a closed subspace of a Fr\'echet space, $\mathcal{S}(U)$ is a Fr\'echet space with the induced topology.
Note that there is a natural bijection between $\mathcal{S}(U)$ and the set
$$\{f\colon U\to\bR|\operatorname{Ext}_U^{\bR^n}(f)\in\mathcal{S}(\bR^n) \text{ and } \operatorname{Ext}_U^{\bR^n}(f)\text{ is flat in }\bR^n\setminus U\}.$$
Thus, we will consider Schwartz functions on $U$ as a class of smooth real-valued functions on $U$.
In this point of view the topology is given by the family
of semi-norms indexed by $(\mathbb{N}_0)^n \times (\mathbb{N}_0)^n$,
$$\|f\|_{k,l}:=\sup\limits_{x\in U}\abs{x^l f^{(k)}(x)}.$$
A partial derivative of a Schwartz function is clearly a Schwartz function as well.

\begin{notation}
If $U\subset\bC$ is an open subset, then by $\mathcal{S}(U)$ we mean the space of Schwartz functions on $U$ when we consider $U$ as an open subset of $\bR^2$.
\end{notation}
The following proposition gives an alternate criteria for a function to be Schwartz.
\begin{proposition}[{\cite[Proposition 3.2.2]{Shaviv}}]\label{prop-characterization-of-schwartz-functions}
Let $U\subsetneq\bR^n$ be an open subset and  $f\in
\mathcal{S}(\bR^n)$. If $f|_{\bR^n\setminus U}\equiv0$, then the following
are equivalent:
\begin{enumerate}
  \item $f\in\mathcal{S}(U)$, i.e., $T_{x_0}(f)\equiv 0$ for any $x_0\in\bR^n\setminus
U$;
  \item for any $m\in\mathbb{N}_0$, $$\sup\limits_{x\in U}\abs{\frac{f(x)}{\operatorname{dist}(x,\bR^n\setminus
U)^m}}<\infty.$$
\end{enumerate}

\end{proposition}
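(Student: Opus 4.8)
The plan is to prove the two implications separately; in both directions the engine is Taylor's theorem together with the fact that, since $f\in\mathcal{S}(\bR^n)$, every partial derivative $f^{(\beta)}$ is bounded on $\bR^n$. Throughout I would write $K:=\bR^n\setminus U$, which is a nonempty closed set (this is where the hypothesis $U\subsetneq\bR^n$ enters), $\delta(x):=\operatorname{dist}(x,K)$ for $x\in U$, and $M_j:=\max_{\abs{\beta}=j}\sup_{\bR^n}\abs{f^{(\beta)}}<\infty$ for $j\in\mathbb{N}_0$. Note that $B(x,\delta(x))\subset U$ for every $x\in U$, and that $\delta(y)<2\delta(x)$ whenever $y\in B(x,\delta(x))$, by the triangle inequality for the distance to a set.

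For $(1)\Rightarrow(2)$ I would fix $m\in\mathbb{N}_0$. If $x\in U$ satisfies $\delta(x)\ge 1$, then $\abs{f(x)}/\delta(x)^m\le\sup_{\bR^n}\abs{f}$ at once. If $\delta(x)<1$, I would pick a nearest point $x_0\in K$ with $\abs{x-x_0}=\delta(x)$ — such a point exists since $K\cap\overline{B(x,1)}$ is compact and nonempty — and apply Taylor's formula with integral remainder of order $m$ about $x_0$. Since $f$ is flat at $x_0$, the polynomial part vanishes identically and only the remainder survives, so
\[
\abs{f(x)}\le C(n,m)\,M_{m+1}\,\abs{x-x_0}^{m+1}=C(n,m)\,M_{m+1}\,\delta(x)^{m+1}\le C(n,m)\,M_{m+1}\,\delta(x)^m,
\]
the last inequality using $\delta(x)<1$. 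Taking the supremum over $x\in U$ gives $\sup_{x\in U}\abs{f(x)/\delta(x)^m}\le\max\bigl(\sup_{\bR^n}\abs{f},\,C(n,m)M_{m+1}\bigr)<\infty$, which is (2).

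For $(2)\Rightarrow(1)$ I must show $f^{(\alpha)}(x_0)=0$ for every $x_0\in K$ and every multi-index $\alpha$; this is immediate at interior points of $K$, so I would fix $x_0\in\partial U$, choose $x_k\in U$ with $x_k\to x_0$ (so $\delta(x_k)\le\abs{x_k-x_0}\to 0$), and show $f^{(\alpha)}(x_k)\to 0$, which by continuity of $f^{(\alpha)}$ forces $f^{(\alpha)}(x_0)=0$. The key estimate, for $x\in U$ and $j=\abs{\alpha}$, is a standard interior interpolation inequality for smooth functions (Landau--Kolmogorov / Gagliardo--Nirenberg type), applied on the ball $B(x,\delta(x))\subset U$:
\[
\abs{f^{(\alpha)}(x)}\le C(n,j)\Bigl(\delta(x)^{-j}\sup_{B(x,\delta(x))}\abs{f}+\delta(x)\,M_{j+1}\Bigr).
\]
Using $\delta(y)<2\delta(x)$ on $B(x,\delta(x))$, hypothesis (2) with exponent $j+1$ bounds $\sup_{B(x,\delta(x))}\abs{f}$ by a constant times $\delta(x)^{j+1}$, and substituting yields $\abs{f^{(\alpha)}(x)}\le C'_\alpha\,\delta(x)$ for all $x\in U$. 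In particular $f^{(\alpha)}(x_k)\to 0$, so $f\in\mathcal{S}(U)$.

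The one ingredient that is not a bare application of Taylor's theorem — and the step I expect to demand the most care — is the interior interpolation inequality used in $(2)\Rightarrow(1)$. I would either cite it from a standard reference or prove the required case by rescaling $B(x,\delta(x))$ to the unit ball and running a short compactness argument, with all higher-order derivative terms absorbed into the finite constants $M_j$. One should also be mildly careful to estimate $\sup_{B(x,\delta(x))}\abs{f}$ using the exponent $j+1$ rather than $1$ in hypothesis (2), so that both terms in the displayed inequality end up $O(\delta(x))$ rather than one of them being $O(1)$. Everything else — existence of nearest points, the estimate $\delta(y)<2\delta(x)$, and the bookkeeping of constants — is routine.
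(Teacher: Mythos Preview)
Your proof is correct, but note that the paper does not actually prove this proposition: it is quoted verbatim with a citation to \cite[Proposition 3.2.2]{Shaviv}, so there is no ``paper's own proof'' to compare against. That said, your argument is self-contained and sound. The direction $(1)\Rightarrow(2)$ is exactly the natural one via Taylor's theorem at a nearest boundary point, using that $f\in\mathcal{S}(\bR^n)$ gives global bounds $M_{m+1}$ on the $(m+1)$-st derivatives. For $(2)\Rightarrow(1)$, the interpolation inequality you invoke,
\[
\abs{f^{(\alpha)}(x)}\le C(n,j)\Bigl(\delta(x)^{-j}\sup_{B(x,\delta(x))}\abs{f}+\delta(x)\,M_{j+1}\Bigr),
\]
follows by rescaling the elementary estimate $\abs{D^j g(0)}\le C\bigl(\sup_{B(0,1)}\abs{g}+\sup_{B(0,1)}\abs{D^{j+1}g}\bigr)$ on the unit ball (itself a consequence of Taylor's formula, or induction on $j$ from the one-dimensional divided-difference identity). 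Your care in applying hypothesis (2) with exponent $j+1$ so that both terms become $O(\delta(x))$ is exactly right, and the observation that interior points of $K$ are trivial while boundary points are handled by continuity of $f^{(\alpha)}$ closes the argument cleanly.
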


\begin{definition}

Two open subsets $U,V\subset\bR^n$ are called \textit{Schwartz equivalent} if there exists a smooth diffeomorphism $\phi\colon U\to V$ such that $\phi^*|_{\mathcal{S}(V)}:\mathcal{S}(V)\to\mathcal{S}(U)$ is an isomorphism of Fr\'echet spaces. 

\end{definition}

\begin{lemma}[c.f. {\cite[Lemma 4.8]{Shaviv}}]\label{lemma-enough-to-pull-functions-to-get-F-iso}
Let $U,V\subset\bR^n$ be open
subsets and $\phi\colon U\to V$ some map. If $\phi^*(\mathcal{S}(V))\subset\mathcal{S}(U)$,
then $\phi^*\colon\mathcal{S}(V)\to\mathcal{S}(U)$ is continuous. In particular, if in addition $\phi$ is invertible and $(\phi^{-1})^*(\mathcal{S}(U))\subset\mathcal{S}(V)$, then $U$ and $V$ are Schwartz equivalent.

\end{lemma}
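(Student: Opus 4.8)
The plan is to deduce the continuity of $\phi^*$ from the closed graph theorem for Fr\'echet spaces. A direct argument — bounding each seminorm $\|s\circ\phi\|_{k,l}$ by finitely many of the $\|s\|_{k',l'}$ — is not feasible under these hypotheses, since it would require quantitative control on the derivatives of $\phi$; the closed graph theorem avoids this. Recall that $\mathcal{S}(U)$ and $\mathcal{S}(V)$ are Fr\'echet spaces, that $\phi^*$ is linear, and that by assumption $\phi^*$ maps $\mathcal{S}(V)$ into $\mathcal{S}(U)$.

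First I would show that the graph $\Gamma=\{(s,\phi^*s):s\in\mathcal{S}(V)\}$ is closed in $\mathcal{S}(V)\times\mathcal{S}(U)$. As both spaces are metrizable, it suffices to take a sequence with $s_j\to s$ in $\mathcal{S}(V)$ and $\phi^*s_j\to g$ in $\mathcal{S}(U)$ and to check $g=\phi^*s$. Since $\|\cdot\|_{0,0}$ is the supremum norm, $s_j\to s$ uniformly on $V$, so $s_j(\phi(x))\to s(\phi(x))$ for every $x\in U$; likewise $\phi^*s_j\to g$ uniformly on $U$, so $(\phi^*s_j)(x)=s_j(\phi(x))\to g(x)$ for every $x\in U$. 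Comparing the two limits gives $g=s\circ\phi=\phi^*s$. Hence $\Gamma$ is closed, and the closed graph theorem gives the continuity of $\phi^*\colon\mathcal{S}(V)\to\mathcal{S}(U)$, which is the first assertion.

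For the ``in particular'' clause, assume additionally that $\phi$ is bijective and $(\phi^{-1})^*(\mathcal{S}(U))\subset\mathcal{S}(V)$. Applying the first part with $\phi^{-1}$ in place of $\phi$ shows that $(\phi^{-1})^*\colon\mathcal{S}(U)\to\mathcal{S}(V)$ is also continuous; since $(\phi^{-1})^*\circ\phi^*=\mathrm{id}_{\mathcal{S}(V)}$ and $\phi^*\circ(\phi^{-1})^*=\mathrm{id}_{\mathcal{S}(U)}$, the map $\phi^*$ is a topological isomorphism of Fr\'echet spaces. One also checks that $\phi$ is automatically a smooth diffeomorphism: pulling back, via $\phi^*$ and $(\phi^{-1})^*$, compactly supported functions and coordinate bumps $x_i\chi\in C_c^\infty(V)\subset\mathcal{S}(V)$ (and the analogous functions on $U$) forces $\phi$ and $\phi^{-1}$ to be smooth. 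Thus $\phi$ is a smooth diffeomorphism inducing a Fr\'echet-space isomorphism between the Schwartz spaces, i.e.\ $U$ and $V$ are Schwartz equivalent.

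The step I expect to require the most care is the appeal to the closed graph theorem: one must be sure that the two Schwartz topologies are genuinely Fr\'echet (already recorded in the excerpt) and that convergence in each of them entails pointwise convergence of the underlying functions, which is exactly what lets the two candidate limits be identified. Everything else is formal bookkeeping.
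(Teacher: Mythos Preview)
The paper does not supply its own proof of this lemma; it merely cites \cite[Lemma~4.8]{Shaviv}. Your closed graph theorem argument is correct and is the standard route for such automatic continuity statements: the key point, which you isolate, is that convergence in either Schwartz topology dominates the sup norm and hence forces pointwise convergence, so the two candidate limits must agree. This is almost certainly the argument in the cited reference as well.

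Your additional remark that smoothness of $\phi$ is forced by the hypotheses (via quotients of pulled-back coordinate bumps) is a nice touch. To spell it out once: for $x_0\in U$ pick $\chi\in C_c^\infty(V)$ with $\chi(\phi(x_0))=1$; then $g:=\chi\circ\phi\in\mathcal{S}(U)$ is smooth with $g(x_0)=1$, so $g\neq 0$ on a neighborhood of $x_0$, and on that neighborhood $\phi_i=\bigl((y_i\chi)\circ\phi\bigr)/g$ is a ratio of smooth functions. In the paper's applications $\phi$ is always already a diffeomorphism, so this verification is not strictly needed there, but it is required to match the stated generality of the lemma.
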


\subsection*{Quasidiscs and H\"older maps}\label{prelim-quasiconformal-geometry}
\begin{definition}\label{definitionquasistuff}
Let $J^0\subset\bC$ be a closed subset and denote by $J$ its closure inside $\widehat\bC$. We say that $J^0$ is a \textit{quasicircle} if $J$ is a Jordan curve and there exists $C>0$ such that
\begin{align}\label{eq-quasicirclecondition}
    \operatorname{diam}(J^0(x,y))\leq C|x-y|,
\end{align}
for all $x,y\in J^{0}$. Here, $\operatorname{diam}(J^0(x,y))$ is the smallest diameter of a connected component of $J^0\setminus\{x,y\}$.
A \textit{quasiarc} is a subset of $\bC$ that is a subset of some quasicircle, and such that its closure inside $\widehat{\bC}$ is homeomorphic to $[0,1]$.
A simply-connected open subset of the complex plane is called a \textit{quasidisc} if its boundary is a quasicircle.

\end{definition}

\begin{remark}\label{remark-quasidisc-cannot-be-dense}
We will often use the fact that a quasidisc is never dense in $\bC$.  This follows immediately from the definition.
\end{remark}
We now mention some properties of quasicircles and quasidiscs.  We only mention properties that we will use below.  For a more complete survey we refer readers to \cite{GH}.

\begin{definition}
Let $U,V \subset \bC$ be connected sets. A \textit{quasisymmetric map} $\phi \colon U \to V$ is a homeomorphism for which there exists a homeomorphism $\eta \colon [0,\infty) \to [0,\infty)$ such that for all distinct $x,y,z \in U$
\begin{align*}
    \frac{|\phi(x)-\phi(y)|}{|\phi(x)-\phi(z)|} \le \eta\biggl ( \frac{|x-y|}{|x-z|} \biggr).
\end{align*}
\end{definition}
Note that the inverse of a quasisymmetry is a quasisymmetry, with the function $\sigma(t) = 1/\eta^{-1}(1/t)$.

The following proposition is a well-known fact regarding quasicircles.  We will not use it in what follows but we mention it here for the sake of completeness.
For a proof, compare with Theorem 13.3.1 in \cite{AIM}.
\begin{prop}\label{prop-quasidisc-is-preserved-under-conformal}
Let $J^0$ be a subset of $\bC$. $J^0$ is a quasicircle if and only if $J^0$ is the image of either the unit circle or $\bR$ under a quasisymmetric map from $\bC$ to itself.
\end{prop}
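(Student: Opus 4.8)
The plan is to prove the two implications separately, leaning on the Riemann mapping theorem together with the theorem of Ahlfors quoted in the introduction, that a conformal map between two simply-connected domains with quasicircle boundaries extends to a quasisymmetry of $\bC$.

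For the reverse implication — that a quasisymmetric image of the unit circle or of $\bR$ is a quasicircle — I would argue directly from the definitions. Suppose $J^0=\phi(\gamma)$, where $\gamma$ is $S^1$ or $\bR$ and $\phi\colon\bC\to\bC$ is quasisymmetric with distortion function $\eta$. Being a homeomorphism of $\bC$, $\phi$ is proper, hence extends to a homeomorphism $\widehat\phi$ of $\widehat\bC$ with $\widehat\phi(\infty)=\infty$; since $\widehat\phi$ commutes with closures and $\overline{\gamma}^{\widehat\bC}$ (that is, $\gamma$ itself when $\gamma=S^1$, and $\gamma\cup\{\infty\}$ when $\gamma=\bR$) is a Jordan curve, so is $J=\overline{J^0}^{\widehat\bC}=\widehat\phi(\overline{\gamma}^{\widehat\bC})$. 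To get \eqref{eq-quasicirclecondition} I would fix distinct $x,y\in J^0$ and set $a=\phi^{-1}(x),\,b=\phi^{-1}(y)\in\gamma$. The components of $J^0\setminus\{x,y\}$ are exactly the $\phi$-images of the components of $\gamma\setminus\{a,b\}$, and among the latter there is always a bounded sub-arc $I$ on which $|a-t|\le|a-b|$ for every $t\in I$ — the shorter of the two arcs when $\gamma=S^1$, the bounded segment when $\gamma=\bR$. For $t\in I$, quasisymmetry gives $|x-\phi(t)|\le\eta(|a-t|/|a-b|)\,|x-y|\le\eta(1)\,|x-y|$, so $\operatorname{diam}\phi(I)\le 2\eta(1)|x-y|$. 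Since $\operatorname{diam}(J^0(x,y))$ is the \emph{smallest} diameter over components, it is at most $\operatorname{diam}\phi(I)$, and \eqref{eq-quasicirclecondition} holds with $C=2\eta(1)$.

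For the forward implication I would split into the bounded and unbounded cases. If $J^0$ is bounded, then $J=J^0$ is a Jordan curve in $\bC$; let $\Omega$ be the bounded component of $\bC\setminus J^0$. It is a bounded simply-connected domain with $\partial\Omega=J^0$, hence a quasidisc, and a Riemann map $\psi\colon\mathbb D\to\Omega$ extends by Carathéodory to a homeomorphism $\overline{\mathbb D}\to\overline\Omega$, so $\psi(S^1)=\partial\Omega=J^0$. Viewing $\psi$ as a conformal map between the quasidiscs $\mathbb D$ and $\Omega$, Ahlfors' theorem provides a quasisymmetry $\Psi\colon\bC\to\bC$ extending $\psi$; as $\Psi$ agrees with $\psi$ on $\mathbb D$ it agrees with the Carathéodory extension on $S^1$, so $\Psi(S^1)=J^0$, exhibiting $J^0$ as a quasisymmetric image of $S^1$. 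If $J^0$ is unbounded, then $J=J^0\cup\{\infty\}$ is a Jordan curve in $\widehat\bC$ through $\infty$; I would pick one of its two complementary Jordan domains $\Omega$. Since $\infty\in J=\partial\Omega$ we have $\Omega\subset\bC$, and $\Omega$ is a simply-connected domain whose boundary in $\bC$ is $J\setminus\{\infty\}=J^0$, hence a quasidisc. The upper half-plane $\mathbb H=\{z\in\bC:\Im(z)>0\}$ is also a quasidisc, its boundary $\bR$ being a quasicircle. Taking a Riemann map $\psi\colon\mathbb H\to\Omega$ and precomposing with a conformal automorphism of $\mathbb H$ (these act transitively on $\bR\cup\{\infty\}$), I may assume the homeomorphic boundary extension of $\psi$ sends $\infty$ to $\infty$, so $\psi(\bR)=J\setminus\{\infty\}=J^0$; Ahlfors' theorem then extends $\psi$ to a quasisymmetry $\Psi\colon\bC\to\bC$ with $\Psi(\bR)=J^0$, as needed.

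The genuinely substantive ingredient is Ahlfors' extension theorem, which we are allowed to quote; granting it, the two places that require care are, first, the distortion estimate in the reverse direction, where one must notice that the smallest-diameter component of $J^0\setminus\{x,y\}$ need not be the image of the shorter arc of $\gamma$, so the bound has to be read off from the component one can control rather than the one realizing the minimum; and second, the placement of the point at infinity in the unbounded case — choosing the complementary Jordan domain inside $\bC$ and normalizing the Riemann map so that $\infty\mapsto\infty$, which is exactly what forces the boundary of the half-plane to land on $J^0$ rather than on $J$.
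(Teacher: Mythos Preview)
Your argument is correct. The paper, however, does not actually prove this proposition: it is presented as a well-known fact, the authors explicitly say they will not use it, and they simply refer the reader to \cite[Theorem~13.3.1]{AIM}. So there is no ``paper's own proof'' to compare against beyond that citation.

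What you supply is a genuine proof built from ingredients the paper does state. For the reverse direction you verify the Ahlfors three-point condition \eqref{eq-quasicirclecondition} directly from the distortion function $\eta$, correctly observing that it suffices to bound the diameter of \emph{some} component of $J^0\setminus\{x,y\}$, not necessarily the one realizing the minimum. For the forward direction you reduce to Theorem~\ref{thm-quasisymmetryextension} (and its half-plane analogue, acknowledged in the paper just after Proposition~\ref{prop-on-bi-holo-bi-holder-for-quasidiscs}) via the Riemann mapping and Carath\'eodory theorems. Your care with the normalization $\widehat\psi(\infty)=\infty$ in the unbounded case is warranted: without it, any quasisymmetric extension $\Psi\colon\bC\to\bC$ would be forced to send a sequence $r_n\in\bR$ with $|r_n|\to\infty$ to a convergent sequence in $\bC$, contradicting properness. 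Relative to simply citing \cite{AIM}, your route has the advantage of staying within the toolkit the paper already assembles; the cost is that the substantive work is still outsourced to the Ahlfors extension theorem, so the argument is self-contained only to the same extent the paper is.
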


\begin{example}\label{examples}

\begin{enumerate}
\item The upper half plane is a quasidisc.
\item The Koch snowflake with a constant angle in each iteration is a quasicircle (see \cite[Figure 0.2]{Fa} and \cite[Exercise 5.4.1]{Po}).
\item The algebraic cusp $\{z\in\bC|\operatorname{Im}(z)^2=\operatorname{Re}(z)^3\}$ is not a quasicircle.
\item The exponential cusp ($\{z\in\bC|\operatorname{Im}(z)=\pm e^{\frac{-1}{\operatorname{Re}(z)^2}}, \operatorname{Re}(z) \ge 0\}$ ) is not a quasicircle.
\item The infinite open strip $\{z\in\bC|0<\operatorname{Im}(z)<\pi\}$ is not a quasidisc.
\item The set $\{z\in\bC|\operatorname{Im}(z)=\sin(\operatorname{Re}(z))\}$ is a quasicircle.
\item The set $\{z\in\bC|\operatorname{Im}(z)=\sin(\operatorname{Re}(z)^2)\}$ is not a quasicircle.
\item The set $\{z\in\bC|\operatorname{Im}(z)=0,\operatorname{Re}(z)\notin(0,1)\}$ is a quasiarc. 

\end{enumerate}

\end{example}

\begin{definition}
Let $U$ and $V$ be subsets of $\bR^n\text{ }(\text{or }\bC)$.  A map $\phi\colon U\to V$ is called a \textit{H\"older map} if there exist $C,\alpha\in\bR_{>0}$
such that $\abs{\phi(x)-\phi(y)}\leq C\abs{x-y}^\alpha$, for all $x,y\in U$.
A \textit{bi-H\"older map} is an invertible map such that both it and its inverse are H\"older maps.
\end{definition}

\begin{proposition}\label{prop-on-bi-holo-bi-holder-for-quasidiscs}
Let $U\subset\bC$ be a bounded quasidisc. Then, any conformal map $\phi\colon\mathbb{D}\to U$ extends to a bi-H\"older map $\phi \colon \bC \to \bC$, and for any bounded quasidisc such a map exists.
\end{proposition}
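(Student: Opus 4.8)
The existence of a conformal map $\phi\colon\mathbb{D}\to U$ is immediate from the Riemann mapping theorem, since a bounded quasidisc is a proper, simply-connected subdomain of $\bC$. It therefore remains to show that every such $\phi$ is bi-H\"older and extends to $\bC$, and the plan is to assemble two classical facts of plane quasiconformal theory: Ahlfors' extension theorem, and the interior H\"older regularity of quasiconformal maps of $\bC$.

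First I would extend $\phi$. As $\partial\mathbb{D}$ is the unit circle and $\partial U$ is a quasicircle, both $\mathbb{D}$ and $U$ are simply-connected domains bounded by quasicircles, so Ahlfors' theorem (\cite{ahlfors63}; recalled in the introduction) promotes the conformal map $\phi$ to a quasisymmetry $\Phi\colon\bC\to\bC$ with $\Phi|_{\mathbb{D}}=\phi$; in particular $\Phi(\mathbb{D})=U$, $\Phi(\partial\mathbb{D})=\partial U$, and $\Phi(\overline{\mathbb{D}})=\overline{U}$, so $\Phi$ is the desired extension to $\bC\to\bC$. Next I would invoke the standard fact that a quasisymmetry of $\bC$ (equivalently, a quasiconformal self-map of $\bC$) is H\"older continuous on every bounded subset, with exponent and constant depending only on the dilatation and on the subset (see \cite{ahlfors,lehtovirtanen,AIM}). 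Applied to $\Phi$ on the bounded set $\overline{\mathbb{D}}$ this gives $C,\alpha>0$ with $|\phi(x)-\phi(y)|\le C\,|x-y|^{\alpha}$ for all $x,y\in\overline{\mathbb{D}}$. For the inverse I would use that the inverse of a quasisymmetry is again a quasisymmetry, so that $\Phi^{-1}\colon\bC\to\bC$ is quasiconformal as well; this is where the boundedness of $U$ enters, for $\overline{U}=\Phi(\overline{\mathbb{D}})$ is then a bounded subset of $\bC$, and applying the same fact to $\Phi^{-1}$ on $\overline{U}$ gives $C',\alpha'>0$ with $|\phi^{-1}(u)-\phi^{-1}(v)|\le C'\,|u-v|^{\alpha'}$ for all $u,v\in\overline{U}$. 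Together these say precisely that $\phi\colon\overline{\mathbb{D}}\to\overline{U}$ is bi-H\"older (so $U$ is a bi-H\"older domain in the sense of \cite{NP}), which, together with the extension $\Phi$, is the assertion of the proposition.

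The two cited theorems carry all the real weight, so the ``proof'' is essentially an assembly; the step that needs care is keeping track of where boundedness is used. The interior H\"older estimate for a plane quasiconformal map is genuinely \emph{local}---its constant deteriorates as the set exhausts $\bC$---so the hypothesis that $U$ be bounded is indispensable for controlling $\Phi^{-1}$ on $\overline{U}$ by uniform constants; this is exactly why the proposition is stated for bounded quasidiscs, unbounded ones being reduced to the bounded case by M\"obius transformations in Section \ref{section-on-unbounded-quasidiscs}. One should also check the hypotheses of Ahlfors' theorem, i.e.\ that both domains are simply-connected with quasicircle boundary, which is clear here since $\mathbb{D}$ is a quasidisc and $U$ is one by hypothesis. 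Finally, ``bi-H\"older map $\phi\colon\bC\to\bC$'' in the statement is to be understood via this construction---$\Phi$ being the ambient quasisymmetry of $\bC$, and $\phi\colon\overline{\mathbb{D}}\to\overline{U}$ being bi-H\"older between the (bounded) closures---which is the form in which this fact is used in Theorem \ref{conformalbiholderbetweenbounded}; indeed a self-homeomorphism of $\bC$ that is H\"older with a single exponent in both directions is necessarily bi-Lipschitz, and a bounded quasidisc such as the Koch-snowflake domain is not a bi-Lipschitz image of $\mathbb{D}$.
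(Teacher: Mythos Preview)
Your proof is correct and essentially identical to the paper's, which simply cites the Riemann mapping theorem together with Theorem~\ref{thm-quasisymmetryextension} (the quasisymmetric extension of the Riemann map) and Theorem~\ref{thm-mori} (Mori's theorem). Your closing observation---that a globally bi-H\"older self-homeomorphism of $\bC$ would be forced to be bi-Lipschitz, so the statement must be read as bi-H\"older on the bounded closures---is a useful clarification of the paper's somewhat loose phrasing, and is indeed exactly the form in which the result is invoked in Theorem~\ref{conformalbiholderbetweenbounded}.
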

This proposition also holds true for unbounded quasidiscs if we replace $\mathbb D$ with the upper half plane.
Proposition \ref{prop-on-bi-holo-bi-holder-for-quasidiscs} is a direct corollary of the following two theorems and the Riemann mapping theorem.
\begin{theorem}[{\cite[p. 94]{Po}}]\label{thm-quasisymmetryextension}
Any conformal map from the unit disc onto a bounded quasidisc extends to a quasisymmetric map from $\bC$ to itself.  
\end{theorem}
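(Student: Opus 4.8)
The plan is to build a quasiconformal extension of $\phi$ to the Riemann sphere $\widehat{\bC}$ by reflecting across the boundary, to repair its behaviour at $\infty$ with a compactly supported perturbation, and then to invoke the standard fact that a quasiconformal self-map of $\bR^2$ is automatically quasisymmetric. Write $U:=\phi(\mathbb D)$, $J:=\partial U$ and $U^*:=\widehat{\bC}\setminus\overline U$; since $U$ is bounded, $U^*$ is an open connected neighbourhood of $\infty$. I would invoke two classical facts. First, since $J$ satisfies the bounded-turning condition \eqref{eq-quasicirclecondition}, Ahlfors's characterization of quasicircles produces a \emph{quasiconformal reflection} across $J$: an orientation-reversing quasiconformal involution $\lambda\colon\widehat{\bC}\to\widehat{\bC}$ fixing $J$ pointwise and interchanging $U$ with $U^*$. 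Second, by Carath\'eodory's theorem $\phi$ extends to a homeomorphism $\overline{\mathbb D}\to\overline U$, which I continue to denote $\phi$.

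I would then let $\rho(z):=1/\bar z$ be the reflection in $\partial\mathbb D$ and define $F\colon\widehat{\bC}\to\widehat{\bC}$ by $F:=\phi$ on $\overline{\mathbb D}$ and $F:=\lambda\circ\phi\circ\rho$ on $\widehat{\bC}\setminus\mathbb D$. For $|z|=1$ one has $\rho(z)=z$ and $\phi(z)\in J$, which $\lambda$ fixes, so the two formulas agree on $\partial\mathbb D$ and $F$ is a well-defined self-homeomorphism of $\widehat{\bC}$. It is conformal on $\mathbb D$, while on $\widehat{\bC}\setminus\overline{\mathbb D}$ it is a composition of the anticonformal $\rho$, the conformal $\phi$, and the orientation-reversing quasiconformal $\lambda$, hence orientation-preserving and $K$-quasiconformal with $K$ the maximal dilatation of $\lambda$. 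Since the analytic curve $\partial\mathbb D$ is removable for quasiconformal maps, $F$ is quasiconformal on all of $\widehat{\bC}$, with $F|_{\mathbb D}=\phi$.

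Finally I would correct the value at $\infty$. As built, $F(\infty)=\lambda(\phi(0))=:p\in U^*$, which need not equal $\infty$; if $p\neq\infty$, I would pick a simple arc in the domain $U^*$ from $p$ to $\infty$ together with a closed topological disc $L\subset U^*$ containing it in its interior, and choose a diffeomorphism $\nu\colon\widehat{\bC}\to\widehat{\bC}$ that is the identity outside $\operatorname{int}(L)$ and carries $p$ to $\infty$ (isotopy extension). Being a diffeomorphism, $\nu$ is quasiconformal; being the identity on $\widehat{\bC}\setminus L\supset\overline U$, it does not disturb $\phi$ on $\mathbb D$. Then $\widetilde\phi:=\nu\circ F$ is quasiconformal on $\widehat{\bC}$, restricts to $\phi$ on $\mathbb D$, and fixes $\infty$, hence restricts to a quasiconformal homeomorphism of $\bC=\bR^2$ extending $\phi$; and since every quasiconformal self-map of $\bR^n$ is quasisymmetric, $\widetilde\phi$ is the required quasisymmetric extension.

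The one genuinely deep ingredient is the first classical fact: that the purely metric condition \eqref{eq-quasicirclecondition} forces a global quasiconformal reflection across $J$ --- equivalently, that the unit circle and $J$ are interchanged by some quasiconformal self-map of $\widehat{\bC}$. This is Ahlfors's theorem, which I would cite rather than reprove. Everything downstream --- the reflection construction, removability of the analytic unit circle, the compactly supported repair at $\infty$, and the implication ``quasiconformal $\Rightarrow$ quasisymmetric'' for self-maps of $\bR^n$ --- is routine.
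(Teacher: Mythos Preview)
The paper does not supply its own proof of this theorem: it is quoted as a background result with a citation to Pommerenke's book, and is used only as a black box (together with Mori's theorem) to obtain Proposition~\ref{prop-on-bi-holo-bi-holder-for-quasidiscs}. So there is no in-paper argument to compare against.

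That said, your proposed argument is correct and is essentially the classical Ahlfors construction. The reflection step, the removability of $\partial\mathbb D$, and the implication ``quasiconformal self-map of $\bR^2$ $\Rightarrow$ quasisymmetric'' are all standard, and you are right to isolate the existence of a quasiconformal reflection across $J$ as the one substantive input. Your repair at $\infty$ via a compactly supported diffeomorphism of $\widehat{\bC}$ is also fine; an alternative that avoids this step is to note that once a quasiconformal $g\colon\widehat{\bC}\to\widehat{\bC}$ with $g(\mathbb D)=U$ exists (which is equivalent to the reflection statement), the map $\phi^{-1}\circ g|_{\mathbb D}\colon\mathbb D\to\mathbb D$ is quasiconformal and hence, by the Beurling--Ahlfors or Douady--Earle extension, extends to a quasiconformal $h\colon\widehat{\bC}\to\widehat{\bC}$ fixing $\mathbb D$; then $g\circ h^{-1}$ extends $\phi$ and one may postcompose with a M\"obius map to fix $\infty$ without touching values on $\overline{\mathbb D}$ only after first arranging $g(\infty)=\infty$ by a M\"obius precomposition. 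Either route works; yours is perfectly acceptable.
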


\begin{theorem}[Mori's theorem {\cite[Ch. 3, Section C]{ahlfors}}]\label{thm-mori}
Quasisymmetric maps from $\bC$ to itself are bi-H\"older.
\end{theorem}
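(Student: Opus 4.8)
The plan is to deduce the statement from the classical disc form of Mori's theorem in \cite[Ch.~3, Section C]{ahlfors}, via three standard reductions: a quasisymmetry of $\bC$ is quasiconformal with a quantitative dilatation bound; the class of (bi-)H\"older maps is stable under composition and is unaffected by pre- or post-composition with Euclidean similarities (which are bi-Lipschitz, hence bi-H\"older); and the inverse of a quasisymmetry is again a quasisymmetry (as recorded after the definition of quasisymmetric maps). Concretely, if $\phi\colon\bC\to\bC$ is $\eta$-quasisymmetric then for every $x\in\bC$ and every $r>0$ the three-point inequality gives
\[
\frac{\max_{|y-x|=r}|\phi(y)-\phi(x)|}{\min_{|y-x|=r}|\phi(y)-\phi(x)|}\le\eta(1),
\]
so $\phi$ is $K$-quasiconformal for some $K$ depending only on $\eta(1)$ (equivalently $\phi$ satisfies a Beltrami equation $\phi_{\bar z}=\mu\,\phi_z$ with $\norm{\mu}_{\infty}\le(K-1)/(K+1)<1$). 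Since the same applies to $\phi^{-1}$, it suffices to produce a H\"older bound for $\phi$ and one for $\phi^{-1}$ and then to conclude that $\phi$ is bi-H\"older.

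The core is the quantitative distortion estimate for $K$-quasiconformal maps, which is Mori's theorem proper. After normalizing by similarities we may assume we are looking at points $x,y$ in a fixed disc, say $\mathbb{D}$. One then considers the ring domain obtained from $\bC$ by deleting the segment $[x,y]$ together with an unbounded continuum lying in $\bC\setminus 2\mathbb{D}$; by the Teichm\"uller extremal property its conformal modulus is bounded below by an explicit increasing function of $1/|x-y|$, hence is large when $|x-y|$ is small. The modulus of a ring domain is quasi-invariant — it changes by at most a factor $K$ under a $K$-quasiconformal map — so the image ring still has large modulus, and comparing it with the Gr\"otzsch extremal ring and unwinding the associated special functions bounds $\operatorname{diam}\phi([x,y])\ge|\phi(x)-\phi(y)|$ by $C(K)\,|x-y|^{1/K}$ on this scale. (Alternatively one may avoid extremal length: from the Beltrami equation, Gehring's reverse-H\"older lemma gives $\phi\in W^{1,p}_{\mathrm{loc}}$ for some $p>2$ depending only on $K$, and Morrey's embedding then yields local H\"older continuity with exponent $1-2/p$.) Running the same argument for $\phi^{-1}$, which is also $K$-quasiconformal, produces the matching bound.

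Finally, I would promote the local estimate to the required global one using the quasisymmetry inequality itself: for arbitrary $x,y\in\bC$, a judiciously chosen auxiliary point lets one compare the ratio $|\phi(x)-\phi(y)|$ with its value at the unit scale, where the local H\"older bound applies, with $\eta$ absorbing the scale comparison; combining this with the analogous statement for $\phi^{-1}$ yields that $\phi$ is bi-H\"older.

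I expect the genuine obstacle to be the distortion estimate — the extremal-length computation with the Gr\"otzsch and Teichm\"uller ring domains, or, in the analytic route, the higher-integrability (reverse-H\"older) step — since that is the only place the quasiconformality hypothesis enters in an essential, quantitative way, and it is precisely the content of \cite[Ch.~3]{ahlfors} that is being invoked. Everything else — passing between the quasisymmetric and quasiconformal formulations, normalizing by similarities, the globalization, and the treatment of the inverse — is routine bookkeeping with the definitions.
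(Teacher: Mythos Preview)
The paper does not prove this theorem at all: it is stated as a citation to \cite[Ch.~3, Section C]{ahlfors} and used as a black box (to conclude, via Theorem~\ref{thm-quasisymmetryextension}, that the Riemann map onto a bounded quasidisc is bi-H\"older). So there is no ``paper's own proof'' to compare against.

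Your sketch is a reasonable outline of the classical argument --- quasisymmetric $\Rightarrow$ quasiconformal, then modulus distortion via the Gr\"otzsch/Teichm\"uller ring estimates (or alternatively Gehring's higher integrability plus Morrey), then symmetry in $\phi$ and $\phi^{-1}$ --- and you correctly identify the extremal-length distortion step as the substantive content. One small caution: the statement as written asserts a \emph{global} bi-H\"older bound on all of $\bC$, not just a local one, and the passage from the local estimate (on a fixed disc) to a single global H\"older inequality $|\phi(x)-\phi(y)|\le C|x-y|^\alpha$ valid for all $x,y\in\bC$ does not quite follow from the mechanism you describe; in fact a $K$-quasiconformal self-map of $\bC$ need not be globally H\"older without a growth normalization (e.g.\ $z\mapsto z|z|^{K-1}$). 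For the paper's purposes this is harmless --- every use of Theorem~\ref{thm-mori} is on a bounded set, where the local estimate suffices --- but if you intend a self-contained proof you should either state the result as \emph{locally} bi-H\"older or make the globalization step precise with an explicit normalization.
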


In what follows, we present background that will only be used in Sections \ref{chapter-complements-to-a-quasiarc} and \ref{section-koebeschwartz}.
A quasisymmetric (or quasiconformal) map, $\phi\colon\bC\to\bC$ always satisfies the Beltrami equation (weakly)
\begin{align}\label{eq-beltrami}
    \frac{\partial \phi}{\partial \bar z} = \mu \frac{\partial \phi}{\partial z},
\end{align}
where the \textit{Beltrami coefficient} $\mu \colon \bC \to \bC$ is a measurable function that satisfies $\|\mu\|_\infty < 1$.
If $\mu = 0$ almost everywhere, then $\phi$ satisfies the Cauchy-Riemann equations weakly and Weyl's lemma implies that $\phi$ is holomorphic.

The following theorem states that given such a $\mu$, one can always solve the Beltrami equation.
\begin{theorem}[Measurable Riemann Mapping Theorem {\cite[Chapter V]{ahlfors}}]\label{thm-measurableriemann}
Let $\mu \colon \bC \to \bC$ be a measurable function such that $\|\mu\|_\infty < 1$.  Then there exists a quasisymmetry $\phi \colon \bC \to \bC$ that satisfies  \eqref{eq-beltrami}.  Additionally, if another map $\psi$ satisfies \eqref{eq-beltrami} for the same $\mu$, then $\phi \circ \psi^{-1}$ is a M\"obius transformation of $\bC$, i.e., an affine linear map.
\end{theorem}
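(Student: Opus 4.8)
The plan is to prove existence and uniqueness separately, following the classical Ahlfors--Bers method built on the Cauchy and Beurling singular integral operators. For existence I would first reduce to the case that $\mu$ has compact support, say $\operatorname{supp}(\mu)\subset\mathbb{D}$: a general $\mu$ splits as $\mu=\mu_0+\mu_\infty$ with $\mu_0$ supported in $\mathbb{D}$ and $\mu_\infty$ in $\bC\setminus\mathbb{D}$, and conjugating the second piece by the inversion $z\mapsto 1/z$ moves its support into a bounded set; one then solves each compactly supported problem and composes, the composition law recalled below guaranteeing that the resulting map has Beltrami coefficient exactly $\mu$.

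For compactly supported $\mu$, I would look for $\phi$ of the form $\phi(z)=z+(\mathcal{C}h)(z)$, where $\mathcal{C}$ is the Cauchy transform and $h\in L^p(\bC)$ is to be found for some $p>2$ close to $2$. Then $\partial_{\bar z}\phi=h$ and $\partial_z\phi=1+\mathcal{B}h$, where $\mathcal{B}$ is the Beurling transform, so the Beltrami equation \eqref{eq-beltrami} becomes the fixed-point equation $(I-\mu\mathcal{B})h=\mu$. The analytic input is that $\mathcal{B}$ is a Calder\'on--Zygmund operator, bounded on $L^p$ for every $1<p<\infty$, with $\|\mathcal{B}\|_{L^2\to L^2}=1$ and $\|\mathcal{B}\|_{L^p\to L^p}\to 1$ as $p\to 2$; hence, since $\|\mu\|_\infty<1$, for $p$ sufficiently close to $2$ we have $\|\mu\|_\infty\,\|\mathcal{B}\|_{L^p\to L^p}<1$ and the Neumann series $h=\sum_{n\ge 0}(\mu\mathcal{B})^n\mu$ converges in $L^p$. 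This yields a $W^{1,p}_{\mathrm{loc}}$ map $\phi$ solving \eqref{eq-beltrami}, normalized by $\phi(z)=z+O(1/z)$ near $\infty$. I would then verify that $\phi$ is a quasiconformal homeomorphism of $\bC$ onto itself: Sobolev embedding ($p>2$) gives local H\"older continuity, and the $L^p$ bound on $h$ gives the dilatation bound. The delicate point is bijectivity, which is most cleanly obtained by approximating $\mu$ by smooth Beltrami coefficients $\mu_n$, solving the corresponding (now elementary, smooth) equations, and passing to a locally uniform limit, using that a nonconstant locally uniform limit of $K$-quasiconformal homeomorphisms is again $K$-quasiconformal while the normalization excludes the constant limit.

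Uniqueness will be the easy direction. Given $\phi$ and $\psi$ both satisfying \eqref{eq-beltrami} for the same $\mu$, I would set $f=\phi\circ\psi^{-1}$, a homeomorphism of $\bC$, and compute its Beltrami coefficient via the composition law: at the point $\psi(z)$ it equals a nonzero multiple of $\dfrac{\mu_\phi(z)-\mu_\psi(z)}{1-\overline{\mu_\psi(z)}\,\mu_\phi(z)}$, which vanishes identically since $\mu_\phi=\mu_\psi=\mu$. Thus $f$ satisfies the Cauchy--Riemann equations weakly, so by Weyl's lemma (as recalled just before the statement) $f$ is holomorphic; the same argument applied to $f^{-1}$ shows $f^{-1}$ is holomorphic, hence $f$ is an injective entire function, so $f(z)=az+b$ with $a\ne 0$, which is the asserted affine (M\"obius) transformation of $\bC$.

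The hard part will lie entirely in existence: the functional-analytic core is the $L^p$ boundedness of the Beurling transform together with the continuity of its $L^p$-operator norm at $p=2$, and the remaining subtleties are upgrading the $L^p$ solution of the integral equation to a genuine homeomorphism and handling Beltrami coefficients that are not compactly supported.
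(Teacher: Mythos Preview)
The paper does not give its own proof of this theorem; it is stated in the preliminaries with a citation to \cite[Chapter V]{ahlfors} and used as a black box (most notably in the proof of Lemma~\ref{lemma-squareroot}). So there is no ``paper's proof'' to compare against.

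That said, your sketch is the classical Ahlfors--Bers argument and is essentially what one finds in the cited reference. The existence part via the Neumann series $(I-\mu\mathcal{B})^{-1}$ on $L^p$ for $p$ near $2$, the reduction to compactly supported $\mu$ by inversion, and the uniqueness via the composition law for Beltrami coefficients plus Weyl's lemma are all correct and standard. One minor point: your decomposition of a general $\mu$ into two compactly supported pieces and ``composing'' the solutions requires some care, since the Beltrami coefficient of a composition is not simply the sum of the pieces; the usual route is rather to solve for the piece supported near infinity (after conjugating by $1/z$), then solve a second Beltrami equation whose coefficient is obtained from the composition law so that the composite has coefficient exactly $\mu$. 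You allude to this but the phrasing ``splits as $\mu=\mu_0+\mu_\infty$'' is slightly misleading. Otherwise the outline is sound.
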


A version of Theorem \ref{thm-quasisymmetryextension} holds for non-simply-connected domains.
\begin{theorem}[{\cite[Ch. II, Theorem 8.3]{lehtovirtanen}}]\label{thm-qsextensionmultiplyconnected}
Let $U, V \subset \bC$ be domains with finitely many connected components of their boundaries.  Suppose that each connected component of the boundaries of $U$ and $V$ is a quasicircle, that at most one connected component
of the boundary of $U$ is unbounded and at most one connected component of the boundary of $V$ is unbounded.
If $\phi \colon U \to V$ is a conformal map, then $\phi$ extends to a quasisymmetry from $\bC$ to itself.
\end{theorem}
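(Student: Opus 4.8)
The plan is to build a quasiconformal extension of $\phi$ to the Riemann sphere $\widehat\bC$ by filling in the complementary ``holes'' of $U$ one at a time and then gluing. The ingredients are: Theorem \ref{thm-quasisymmetryextension} together with Mori's theorem \ref{thm-mori} (and the standard counterpart of Theorem \ref{thm-quasisymmetryextension} for the exterior of a quasicircle), which together say that the conformal map from $\mathbb D$ onto either complementary component of a quasicircle extends to a quasisymmetry of $\widehat\bC$; the Beurling--Ahlfors extension of quasisymmetric circle homeomorphisms to the disc; and the removability of quasicircles for quasiconformal maps. I work in $\widehat\bC$ throughout, and after a preliminary M\"obius normalization (moving an interior point of $U$, and its $\phi$-image, to $\infty$) I may assume that every boundary component of $U$ and of $V$ is a \emph{bounded} quasicircle. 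The construction produces a quasiconformal self-homeomorphism of $\widehat\bC$ extending $\phi$; undoing the normalization gives the asserted quasisymmetry (a quasisymmetry of $\bC$ once a suitable point is returned to the interior).

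Write $E_1,\dots,E_n$ for the connected components of $\widehat\bC\setminus U$ --- closed Jordan regions with $\partial E_i = J_i$ a quasicircle --- and similarly $E_i'$, $J_i'$ for $V$. Since the $J_i$ are Jordan curves, Carath\'eodory's theorem extends $\phi$ to a homeomorphism $\overline U\to\overline V$ of closures in $\widehat\bC$, which after relabeling carries each $J_i$ onto $J_i'$. The first and main step is to show that each boundary correspondence $\phi|_{J_i}\colon J_i\to J_i'$ is quasisymmetric. The point is that $U$ sits inside the quasidisc $D_i := \widehat\bC\setminus E_i$; in fact $U = D_i\setminus K_i$ where $K_i := \bigcup_{j\ne i}E_j$ is a \emph{compact} subset of $D_i$. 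Pick conformal maps $F_i\colon D_i\to\mathbb D$ and $F_i'\colon D_i'\to\mathbb D$, which extend to quasisymmetries of $\widehat\bC$ by the extension property quoted above. Since $F_i(K_i)$ is compact in $\mathbb D$, the image $F_i(U)$ contains a full one-sided collar $\{\,1-\delta<|z|<1\,\}$ of $\partial\mathbb D$, and on this collar the map $g_i := F_i'\circ\phi\circ F_i^{-1}$ is conformal, takes values in $\mathbb D$, and extends continuously to a homeomorphism of $\partial\mathbb D$ (a composition of Carath\'eodory boundary maps). By the Schwarz reflection principle $g_i$ is therefore conformal on a two-sided annular neighborhood of $\partial\mathbb D$, so $g_i|_{\partial\mathbb D}$ is a real-analytic diffeomorphism of $\partial\mathbb D$ with non-vanishing derivative, hence bi-Lipschitz, hence quasisymmetric. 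Writing $\phi|_{J_i} = (F_i'|_{J_i'})^{-1}\circ g_i|_{\partial\mathbb D}\circ F_i|_{J_i}$ as a composition of three quasisymmetries (the outer two because $J_i$ and $J_i'$ are quasicircles) completes this step.

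It remains to fill in the holes. Fix $i$ and choose conformal maps $\psi_i\colon\Int E_i\to\mathbb D$ and $\psi_i'\colon\Int E_i'\to\mathbb D$, which again extend to quasisymmetries of $\widehat\bC$ since $J_i$ and $J_i'$ are quasicircles. Then $h_i := \psi_i'\circ\phi|_{J_i}\circ\psi_i^{-1}\colon\partial\mathbb D\to\partial\mathbb D$ is a composition of quasisymmetries, hence a quasisymmetric circle homeomorphism, so by the Beurling--Ahlfors extension (carried over from $\bR$ to $\partial\mathbb D$) it extends to a quasiconformal self-map $H_i$ of $\mathbb D$. Put $\Phi_i := (\psi_i')^{-1}\circ H_i\circ\psi_i\colon\overline{E_i}\to\overline{E_i'}$; this is quasiconformal (conformal $\circ$ quasiconformal $\circ$ conformal) and agrees with $\phi$ on $J_i$ by construction. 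Now define $\Phi\colon\widehat\bC\to\widehat\bC$ by $\Phi = \phi$ on $\overline U$ and $\Phi = \Phi_i$ on each $E_i$; the boundary matching makes $\Phi$ a well-defined homeomorphism. It is quasiconformal off $\bigcup_i J_i$ (even $1$-quasiconformal on $U$, since $\phi$ is conformal there), and since each $J_i$ is a quasicircle and quasicircles are removable for quasiconformal homeomorphisms, $\Phi$ is quasiconformal on all of $\widehat\bC$ --- which, as noted above, yields the desired extension of $\phi$.

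The crux is the first step, the quasisymmetry of the boundary correspondence. Two points there need care: (i) that $U$ genuinely contains a full one-sided collar of $J_i$ --- this is exactly where finiteness of the number of boundary components enters, via the compactness of $K_i$ inside $D_i$ --- and (ii) interpreting the several ``extends to a quasisymmetry of $\widehat\bC$'' statements, and ``Euclidean quasisymmetric on $J_i$'', when a boundary component is unbounded; this is what the preliminary M\"obius normalization is for, and it must be arranged before anything else. The removability of quasicircles for quasiconformal maps, used in the gluing step, is the one genuinely external ingredient; it is classical --- a quasicircle has zero area and is the quasiconformal image of a line, which is removable --- and I would cite it rather than reprove it.
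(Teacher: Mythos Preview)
The paper does not prove this theorem; it is quoted from Lehto--Virtanen (Ch.~II, Theorem~8.3) and used as a black box in the proof of Theorem~\ref{schwartzkoebe}. So there is no ``paper's own proof'' to compare against. Your argument is essentially the classical one and is correct in outline: straighten each boundary quasicircle by a global quasisymmetry, use Schwarz reflection across $\partial\mathbb D$ to see that the induced boundary correspondence on $\partial\mathbb D$ is real-analytic with non-vanishing derivative (hence bi-Lipschitz), transport this back to conclude that $\phi|_{J_i}$ is quasisymmetric, then fill each hole via a Beurling--Ahlfors type extension and glue using removability of quasicircles. The reflection step is exactly where finiteness of the boundary components is used, via the compactness of $K_i=\bigcup_{j\ne i}E_j$ inside $D_i$, and you flag this correctly.

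One point that deserves a line of justification rather than a parenthetical: the statement asserts a quasisymmetry of $\bC$, i.e., the extension should fix $\infty$. Your normalization places $\infty$ in the interior of $U$ with $\phi(\infty)=\infty$, so the glued map $\Phi$ does fix $\infty$; but when you undo the normalization by conjugating with the M\"obius maps, $\infty$ moves and the resulting extension of the \emph{original} $\phi$ need not send $\infty$ to $\infty$. The clean fix is not to normalize $\infty$ into the interior, but rather to reduce (again by a M\"obius map, as in the proof of Theorem~\ref{schwartzkoebe}) to the case where $U$ and $V$ are both bounded; then $\infty$ lies in the unbounded complementary components $E_n$, $E_n'$, which correspond under $\phi$ by a topological argument, and in the hole-filling for that single pair you choose $\psi_n,\psi_n'$ to send $\infty$ to $0$ and adjust $H_n$ by a quasiconformal self-map of $\mathbb D$ fixing $\partial\mathbb D$ pointwise so that $H_n(0)=0$. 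With that tweak the extension fixes $\infty$ and is a quasisymmetry of $\bC$ as stated.
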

We also will use a result that states that countably many points are removable for conformal maps. For a discussion on removability theorems for conformal maps see \cite[Ch. V.3]{lehtovirtanen}.
\begin{prop}\label{prop-points-are-removable}
Let $U$ be a domain in $\bC$ and $K \subset U$ a countable set of points that is closed in $\bC$.  If $\phi \colon U\setminus K \to \bC$ is a bounded conformal map into $\bC$, then $\phi$ extends to a conformal map on $U$.
\end{prop}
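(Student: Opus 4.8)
The plan is to prove Proposition \ref{prop-points-are-removable} by reducing to the classical fact that a single point is a removable singularity for a bounded holomorphic function (Riemann's removable singularity theorem), and then handling the countable closed set $K$ by a compactness/connectedness argument. First I would observe that since $K$ is closed in $\bC$ and contained in the open set $U$, it has no accumulation point inside $U$ (a countable closed subset of $\bC$ that accumulated somewhere in $U$ would, by the Baire category theorem applied to $K$ with its subspace topology, contain a point with a neighborhood in $K$ that is... ) — actually the cleanest route is: $K$ is closed in $\bC$, hence closed in $U$, and I claim it is in fact \emph{discrete} in $U$. Indeed, $K$ with the subspace topology is a countable, locally compact Hausdorff space (being closed in $\bC$), hence a Baire space; if $K$ had no isolated points it would be perfect, and a nonempty perfect Polish space is uncountable, a contradiction. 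Applying the same argument to every nonempty closed subset of $K$ (each is again countable, closed in $\bC$, hence Baire) shows $K$ has no nonempty perfect subset, i.e. $K$ is scattered, and combined with being closed in $U$ this forces $K$ to be discrete and closed in $U$ — equivalently, $K$ has no accumulation points in $U$.

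Once $K$ is known to be discrete and closed in $U$, the removability is almost immediate: for each $p \in K$ choose a small disc $B(p,r_p) \subset U$ with $B(p,r_p) \cap K = \{p\}$; then $\phi$ restricted to $B(p,r_p)\setminus\{p\}$ is a bounded holomorphic function on a punctured disc, so by Riemann's removable singularity theorem it extends holomorphically across $p$. These local extensions agree with $\phi$ on the overlaps (they agree on $B(p,r_p)\setminus\{p\}$, which is dense in $B(p,r_p)$, and hence everywhere by continuity), so they glue to a holomorphic function $\widetilde\phi \colon U \to \bC$ extending $\phi$. It remains to check that $\widetilde\phi$ is \emph{conformal}, i.e. injective. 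For injectivity I would argue as follows: suppose $\widetilde\phi(a) = \widetilde\phi(b)$ with $a \neq b$ in $U$. At least one of $a,b$ — say $a$ — lies in $U\setminus K$, because $\phi$ is already injective on $U\setminus K$; if $b \in U\setminus K$ too this contradicts injectivity of $\phi$ directly, so $b \in K$. Near $b$, $\widetilde\phi$ is a nonconstant holomorphic function (nonconstant since $\phi$ is injective on a deleted neighborhood), so by the open mapping theorem it takes the value $\widetilde\phi(b)$ on a set with no accumulation point near $b$; but $\widetilde\phi = \phi$ takes the value $\widetilde\phi(a)$ at the single point $a$ and, more to the point, on the punctured neighborhood of $b$ the map $\phi$ is injective and omits the value $\phi$ takes elsewhere — one gets a contradiction with the argument principle: a small circle around $b$ maps under $\widetilde\phi$ to a curve whose winding number about $\widetilde\phi(b)$ equals the local multiplicity $m \geq 1$, and if additionally $\widetilde\phi(a) = \widetilde\phi(b)$ for $a$ outside this small disc then $\phi$ would be non-injective on $U\setminus K$ near $b$ when $m \geq 2$, while if $m = 1$ the point $a$ must coincide with $b$.

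The main obstacle I anticipate is the verification that $K$ is discrete in $U$ (equivalently, has no accumulation points in $U$), since the hypothesis only says $K$ is closed in $\bC$ and countable — a priori $K$ could look like $\{1/n : n \geq 1\} \cup \{0\}$, which is closed in $\bC$ but not discrete; the point is that such a $K$ still does not accumulate \emph{inside} $U$ unless $0 \in U$, and if $0 \in U$ then $K$ fails to be... no — actually $\{1/n\}\cup\{0\}$ \emph{is} closed in $\bC$, so if $U$ is a neighborhood of $0$ this $K$ is a legitimate example and is \emph{not} discrete in $U$. So the scattered-set argument above is essential and must be done carefully: the correct statement to prove is that $K$ is scattered (every nonempty subset has an isolated point, via the Baire category argument on closed subsets of $K$), and then that a scattered set closed in $U$ still permits the puncture-by-puncture removability by transfinite induction on the Cantor–Bendixson rank, or more simply that removability across a point is a local statement and the isolated points of $K$ can be removed first, reducing $K$ to its derived set $K'$ (again closed in $\bC$, countable), and iterating — but a cleaner formulation avoiding transfinite induction is to note that after removing the isolated points of $K$ we may need to re-examine accumulation, so I would instead directly invoke the known removability result for countable closed sets (as referenced in \cite[Ch. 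V.3]{lehtovirtanen}) for the singularity-removal part and spend the writeup only on upgrading "holomorphic extension" to "conformal (injective) extension", which is the genuinely new content needed here.
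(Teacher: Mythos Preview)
The paper's own proof is a one-line citation to Ahlfors--Beurling, so there is no detailed argument to compare against; your proposal is an attempt to supply what the paper deliberately outsources. That is a reasonable exercise, but as written it has two genuine gaps.

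First, as you yourself notice midway through, the claim that $K$ is discrete in $U$ is simply false (your example $\{1/n\}\cup\{0\}$ inside a disc containing $0$ shows this). Your fallback --- that $K$ is scattered and one can remove points by transfinite induction on the Cantor--Bendixson rank --- is correct in outline, but you never actually carry it out; ``iterate'' and ``transfinite induction'' are not proofs. If you want an elementary argument you must write down the induction: at successor stages apply Riemann's theorem to the isolated points of the current derived set, and at limit stages observe that the local extensions agree on overlaps so glue. Otherwise you are back to citing a reference, which is exactly what the paper does.

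Second, your injectivity argument is broken. The assertion ``at least one of $a,b$ lies in $U\setminus K$, because $\phi$ is already injective on $U\setminus K$'' is a non sequitur: nothing prevents $a,b\in K$ with $\widetilde\phi(a)=\widetilde\phi(b)$. And even granting your case split, the argument-principle step only shows $\widetilde\phi$ has local degree $1$ at $b$; it does not force a distant point $a$ to coincide with $b$. The clean fix uses countability of $K$ directly: if $\widetilde\phi(a)=\widetilde\phi(b)=w$ with $a\neq b$, take disjoint discs $D_a,D_b$ about $a,b$; by the open mapping theorem $\widetilde\phi(D_a)\cap\widetilde\phi(D_b)$ is a nonempty open set, and since $\widetilde\phi(K)$ is countable we may pick $w'$ in this intersection with $w'\notin\widetilde\phi(K)$. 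Any preimages $a'\in D_a$, $b'\in D_b$ of $w'$ then lie in $U\setminus K$ and are distinct, contradicting injectivity of $\phi$. The same device (choose a value outside $\widetilde\phi(K)$) also rules out local multiplicity $\ge 2$ at points of $K$, replacing your argument-principle paragraph.
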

\begin{proof}
This follows from the results in \cite{ahlforsbeurling}, particularly Theorem 3.
\end{proof}
\begin{corollary}\label{cor-points-are-removable}
Let $U$ be a domain in $\bC$ and $K \subset U$ a countable set of points so that $U\setminus K$ is a domain.  If $\phi \colon U\setminus K \to \bC$ is a bounded conformal map into $\bC$, then $\phi$ extends to a conformal map on $U$.
\end{corollary}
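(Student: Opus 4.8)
The plan is to reduce to Proposition \ref{prop-points-are-removable}, whose sole extra hypothesis over Corollary \ref{cor-points-are-removable} is that $K$ be closed in $\bC$ rather than merely relatively closed in $U$ (note that $U\setminus K$ being open already forces $K$ to be relatively closed in $U$). A countable set that is closed in $U$ can nonetheless accumulate on $\partial U$, so Proposition \ref{prop-points-are-removable} does not apply to the pair $(U,K)$ directly; instead I will apply it on small discs, on which the relevant portion of $K$ is compact, and then glue.

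First I would localize at an arbitrary point $p\in K$. Since $U$ is open, there is $r_0>0$ with $\overline{B(p,r_0)}\subset U$, and since $K$ is countable only countably many radii $r\in(0,r_0)$ satisfy $\partial B(p,r)\cap K\neq\emptyset$ (each point of $K$ lies on a single circle about $p$); fix $r$ avoiding these, so that $K\cap B(p,r)=K\cap\overline{B(p,r)}$. As $K$ is closed in $U$ and $\overline{B(p,r)}$ is a compact subset of $U$, this set is compact, hence closed in $\bC$, countable, and contained in $B(p,r)$. Since $B(p,r)\setminus K\subset U\setminus K$, the restriction of $\phi$ there is a bounded conformal map (a disc minus a countable set is still a domain), so Proposition \ref{prop-points-are-removable}, applied to the domain $B(p,r)$ and the set $K\cap B(p,r)$, produces a conformal extension $\phi_p$ of $\phi$ to $B(p,r)$. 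Doing this for every $p\in K$, and setting $\phi_q:=\phi$ on a small ball around each $q\in U\setminus K$, I would then glue: any two of these maps agree on the overlap of their balls minus the countable set $K$, a nonempty open set, so by the identity theorem they agree on the (convex, hence connected) overlap. This yields a holomorphic $\widetilde\phi\colon U\to\bC$ with $\widetilde\phi|_{U\setminus K}=\phi$, which is non-constant and locally injective.

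It then remains to upgrade local injectivity to injectivity on all of $U$, so that $\widetilde\phi$ is conformal. If $\widetilde\phi(z_1)=\widetilde\phi(z_2)=w$ with $z_1\neq z_2$, then $\widetilde\phi^{-1}(w)$ is discrete (as $\widetilde\phi$ is non-constant and $U$ connected), so I can choose disjoint discs $D_i\ni z_i$ with $\overline{D_i}\subset U$ and $\widetilde\phi^{-1}(w)\cap\overline{D_i}=\{z_i\}$; by the open mapping theorem $\widetilde\phi(D_1)\cap\widetilde\phi(D_2)$ is a nonempty open set, so it contains a point $w'\notin\widetilde\phi(K)$ (the latter being countable), and writing $w'=\widetilde\phi(\zeta_i)$ with $\zeta_i\in D_i$ forces $\zeta_1\neq\zeta_2$ to both lie in $U\setminus K$ with $\phi(\zeta_1)=\phi(\zeta_2)$, contradicting injectivity of $\phi$. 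The only real obstacle is the first one --- arranging that the piece of $K$ we feed into Proposition \ref{prop-points-are-removable} is closed in $\bC$ --- and the key observation there is simply that a countable set meets all but countably many concentric circles trivially; everything after that is routine complex analysis.
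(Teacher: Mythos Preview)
Your proof is correct, but the route differs from the paper's. The paper works globally: for each small $\epsilon>0$ (chosen so that $\partial N(\partial U,\epsilon)\cap K=\emptyset$, possible by countability) it splits $K$ into $K_{\epsilon,1}=K\cap N(\partial U,\epsilon)$ and the remainder $K_{\epsilon,2}$, shows that $K_{\epsilon,2}$ is closed in $\bC$, and applies Proposition~\ref{prop-points-are-removable} once to extend $\phi$ conformally to all of $U\setminus K_{\epsilon,1}$. Letting $\epsilon_n\to 0$ produces a nested increasing family of domains exhausting $U$, each carrying a conformal extension; global injectivity then comes for free, since any two points of $U$ eventually lie in a common member of the nested family. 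Your approach instead localizes to a small disc around each point of $K$, applying the Proposition disc-by-disc and then gluing via the identity theorem; the price is that global injectivity is no longer automatic and you must supply the open-mapping-theorem argument in your last paragraph (which is fine). Both arguments hinge on the same observation---a countable set misses all but countably many level sets of a continuous function (radii of concentric circles for you, distance to $\partial U$ for the paper)---so the key idea is shared; the paper's nested-exhaustion packaging simply sidesteps the extra injectivity step.
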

\begin{proof}
Fix $\epsilon > 0$ and let $N(\partial U,\epsilon)$ be an open $\epsilon$-neighborhood of $\partial U$. We decompose $K$ into $K_{\epsilon,1} \cup K_{\epsilon, 2}$, where $K_{\epsilon,1} =K\cap N(\partial U,\epsilon)$ and $K_{\epsilon,2} = K \setminus K_{\epsilon,1}$.  Since $K$ is a countable set of points we can always choose $\epsilon$ arbitrary small and so that $\partial N(\partial U,\epsilon) \cap K = \emptyset$.
With this choice, we have that $K_{\epsilon,1}$ is closed in $K$.  Indeed, if $\{z_i\}_{i \in \mathbb N}$ is a sequence of points in $K_{\epsilon,1}$ that converges to $z_0 \in K$, then $z_0 \in \overline{N(\partial U,\epsilon)}$.  By our choice of $\epsilon$, $z_0 \in N(\partial U,\epsilon)$ and $K_{\epsilon,1}$ is closed in $K$.  This implies that $K_{\epsilon,1}$ is also closed in $U$.

Additionally, $K_{\epsilon,2}$ is closed in $\bC$.  To see this we note that $K_{\epsilon,2} \subset U \setminus N(\partial U,\epsilon)$.  If $\{z_i\}_{i \in \mathbb N}$ is a sequence of points in $K_{\epsilon,2}$ that converges to $z_0 \in \mathbb C$, then $z_0 \in U$.  By the hypothesis that $U\setminus K$ is a domain we have that $z_0 \in K$.
We conclude by Proposition \ref{prop-points-are-removable} that the map $\phi$ extends to a conformal map from $U \setminus K_{\epsilon,1}$ into $\bC$.

Let $\{\epsilon_n\}_{n \in \mathbb N}$ be a sequence converging to $0$ so that $\partial N(\partial U,\epsilon_n) \cap K = \emptyset$ for any $n\in\mathbb{N}$.  This gives a decomposition, $K = K_{\epsilon_n,1} \cup K_{\epsilon_n,2}$, so that $\phi$ extends to a conformal map on $U \setminus K_{\epsilon_n,1}$ for all $n \in \mathbb N$.  Since $\epsilon_n$ converges to $0$, we have that $K = \cup_{n \in \mathbb N} K_{\epsilon_n ,2}$.  Thus, $\phi$ extends to a conformal map on $U$.
\end{proof}

We also record here Koebe's theorem on multiply-connected domains in $\bC$.
\begin{theorem}[{\cite{Koebe}}]\label{thm-koebe}
If $U \subset \bC$ is a domain with finitely many connected components in its boundary, then there exists a conformal map sending $U$ to a domain whose connected components of its boundary consist of points and circles (i.e., a circle domain).  Moreover, the map is unique up to a M\"obius transformation.
\end{theorem}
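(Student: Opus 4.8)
The plan is to prove existence by induction on the number $n$ of connected components of $\partial U$, using Koebe's osculation method for the inductive step, and to prove uniqueness via the Schwarz reflection principle. Throughout, isolated boundary points are handled by the removability results (Proposition \ref{prop-points-are-removable} and Corollary \ref{cor-points-are-removable}): if $\{p\}$ is a connected component of $\partial U$ and $U\cup\{p\}$ is a domain, then any bounded conformal map on $U$ extends across $p$, so it suffices to uniformize the larger domain obtained by adjoining all such isolated boundary points and afterwards to re-puncture (an isolated puncture being a degenerate circle). After a preliminary M\"obius transformation I may therefore assume $\infty\in U$, and that each of the $n$ connected components $V_0,\dots,V_{n-1}$ of $\widehat{\bC}\setminus U$ is a non-degenerate continuum.

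For the base case $n\le 1$, the set $U$ is a simply-connected proper subdomain of $\widehat{\bC}$, and the Riemann mapping theorem sends it conformally onto a round disc. For the inductive step $n\ge 2$, I would run Koebe's osculation (``Schmiegungsverfahren''): for each index $j$ the set $\widehat{\bC}\setminus V_j$ is a simply-connected proper subdomain of $\widehat{\bC}$ containing $U$, so it has a conformal map $\psi_j$ onto the complement of a round disc, normalized at $\infty$ by $\psi_j(z)=z+O(1/z)$; replacing $U$ by $\psi_j(U)$ turns the boundary component coming from $V_j$ into a genuine round circle while distorting the others. Iterating this cyclically over $j\pmod n$ produces conformal maps $\Phi_1,\Phi_2,\dots$ of $U$, all normalized at $\infty$ and hence forming a normal family; the key point is then to show, via Koebe-type distortion estimates, that each circularization step contracts a suitable ``eccentricity'' of the remaining complementary components by a fixed factor strictly less than $1$, so that the $\Phi_k$ converge locally uniformly to a conformal map $\Phi$ whose image is a circle domain.

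For uniqueness, suppose $f_1\colon U\to D_1$ and $f_2\colon U\to D_2$ are conformal maps onto circle domains, and consider $g:=f_2\circ f_1^{-1}\colon D_1\to D_2$. Since the boundary components of $D_1$ and $D_2$ are circles (or points), $g$ extends to a homeomorphism of closures, and hence, by the Schwarz reflection principle, extends conformally across each boundary circle $C$ of $D_1$ --- reflect in $C$, then in the circle $g(C)\subset\partial D_2$. Iterating all such reflections continues $g$ to a conformal map defined on $\widehat{\bC}\setminus\Lambda$, where $\Lambda$ is the limit set of the group generated by the reflections in the boundary circles of $D_1$ (the orbit of any puncture is countable and removable by Proposition \ref{prop-points-are-removable}). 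For $n\le 2$ one argues directly that conformal automorphisms of $\widehat{\bC}$, of $\mathbb{D}$, and of round annuli are M\"obius (the last together with $z\mapsto r/z$). For $n\ge 3$, $\Lambda$ is a compact set of zero area; the bounded map $g$ therefore extends across $\Lambda$ to a conformal automorphism of $\widehat{\bC}$, that is, a M\"obius transformation whose restriction to $D_1$ is $g$. In all cases $f_2$ and $f_1$ differ by a M\"obius transformation.

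I expect the main obstacle to be the convergence of the osculation iteration: this requires making quantitative the distortion estimate that each circularization step improves the remaining components by a definite amount, and it is the only substantially analytic step in the existence proof. A secondary difficulty, used in the uniqueness argument, is the classical but nontrivial fact that the limit set of a finitely-generated reflection (Schottky-type) group has zero area, hence is removable for the bounded conformal map $g$. Should the quantitative distortion estimates prove awkward, an alternative route to existence in the finitely-connected case is a continuity (extremal-length) argument, which I would use as a fallback.
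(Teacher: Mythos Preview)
The paper does not prove this theorem. Theorem~\ref{thm-koebe} is stated in the preliminaries section with a citation to Koebe's 1918 paper \cite{Koebe} and is used as a black box (in the proof of Theorem~\ref{schwartzkoebe}); no argument for it appears anywhere in the text. So there is nothing to compare your proposal against.

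That said, your outline is a faithful sketch of the classical route: existence via Koebe's \emph{Schmiegungsverfahren} (cyclically circularizing one complementary component at a time and passing to a normal-family limit), and uniqueness by extending $g=f_2\circ f_1^{-1}$ across every boundary circle via Schwarz reflection, then removing the limit set of the generated reflection group. You have correctly identified the two genuinely analytic inputs --- the quantitative contraction estimate that forces the osculation iterates to converge, and the removability of the limit set --- and flagged them as the nontrivial steps. One small caution: for the uniqueness argument you should not appeal to zero \emph{area} of $\Lambda$ alone, since area zero is not in general sufficient for removability of bounded conformal maps; what one actually uses is that $\Lambda$ is totally disconnected and has absolute area zero (equivalently, zero analytic capacity), which does hold for the limit set of a finitely generated classical Schottky-type reflection group. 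With that refinement, your plan is the standard one.
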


Theorem \ref{thm-koebe} is a generalization of the Riemann mapping theorem. He and Schramm \cite{HS93} generalized Theorem \ref{thm-koebe} to domains with countably many boundary components, however we will not use this result. Also, it is interesting to mention that
in contrast with the Riemann mapping theorem, the fundamental group of a non-simply-connected domain does not completely determine it up to a conformal map. This may be seen from the uniqueness statement in Theorem \ref{thm-koebe}. A conformal map between two circle domains must be a M\"obius transformation, and so the conformal equivalence class of a given circle domain is rather small (it is described by 4 complex parameters). For instance, two annuli are conformally equivalent if and only if the ratios of their radii are the same.

\section{Schwartz equivalence of bounded quasidiscs}\label{section-on-bounded-quasidiscs}
We start by showing that we can bound the distortion of the ``distance to the boundary function" caused by a H\"older map.
\begin{lemma}\label{lemma-distortion-of-distance}

Let $\phi\colon U\to V$ be a H\"older map between proper open subsets
of $\bR^n$. Then, there exist $\alpha,C_\alpha\in\bR_{>0}$
such that for all $x\in U$, 
\[
\operatorname{dist}(\phi(x),\bR^n\setminus
V)\leq C_\alpha \operatorname{dist}(x,\bR^n\setminus U)^\alpha.
\]

\end{lemma}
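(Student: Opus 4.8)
The plan is to exploit two facts: a H\"older map is uniformly continuous, hence extends continuously to the closure $\overline U$; and, because $\phi$ is a homeomorphism onto $V$, this extension carries no boundary point of $U$ into $V$. Granting these, the desired estimate drops out of the H\"older inequality applied to $x$ together with a nearest point of $\bR^n\setminus U$.

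First I would record H\"older data $C,\alpha\in\bR_{>0}$ with $|\phi(a)-\phi(b)|\le C|a-b|^{\alpha}$ for all $a,b\in U$. Being uniformly continuous, $\phi$ extends to a continuous map $\overline{\phi}\colon\overline U\to\bR^n$; passing to limits in the H\"older inequality shows $\overline{\phi}$ satisfies the same estimate on all of $\overline U$, and $\overline{\phi}(\overline U)\subseteq\overline{\phi(U)}=\overline V$.

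The key step, which I expect to be the only real obstacle, is to show $\overline{\phi}(\partial U)\subseteq\bR^n\setminus V$. Here one uses that $\phi\colon U\to V$ is a homeomorphism (an injective continuous map between open subsets of $\bR^n$ is open by invariance of domain, hence a homeomorphism onto its image, which here is $V$). Suppose toward a contradiction that $y\in\partial U$ has $\overline{\phi}(y)=v\in V$, and pick $x_k\in U$ with $x_k\to y$. Then $\phi(x_k)\to v$, so continuity of $\phi^{-1}$ gives $x_k=\phi^{-1}(\phi(x_k))\to\phi^{-1}(v)\in U$, contradicting $x_k\to y\in\partial U$ by uniqueness of limits. (Surjectivity really is needed: for a non-surjective H\"older map such as the inclusion $(0,1)\hookrightarrow(-10,10)$ the asserted inequality fails near an endpoint.)

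Finally, fix $x\in U$ and put $d:=\operatorname{dist}(x,\bR^n\setminus U)$, which is finite and positive since $U$ is a proper open set. Choose $y\in\bR^n\setminus U$ with $|x-y|=d$ (possible as $\bR^n\setminus U$ is closed and nonempty). Since $B(x,d)\subseteq U$, we have $y\in\overline{B(x,d)}\cap(\bR^n\setminus U)\subseteq\partial U$, hence $\overline{\phi}(y)\in\bR^n\setminus V$ by the previous step, and therefore
\[
\operatorname{dist}(\phi(x),\bR^n\setminus V)\le|\phi(x)-\overline{\phi}(y)|\le C|x-y|^{\alpha}=C\,\operatorname{dist}(x,\bR^n\setminus U)^{\alpha}.
\]
Thus $\alpha$ together with $C_\alpha:=C$ works, and apart from the boundary step everything is elementary point-set topology and the definition of a H\"older map.
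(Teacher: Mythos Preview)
Your argument is correct and slightly cleaner than the paper's. The paper does not extend $\phi$ to $\overline U$; instead, given $x_0\in U$ with $d:=\operatorname{dist}(x_0,\bR^n\setminus U)$, it picks a sequence $x_i\in U$ converging to a point of $\partial U$ with $|x_i-x_{i-1}|\le d/2^i$, argues that $\{\phi(x_i)\}$ escapes every compact subset of $V$, and then bounds $\operatorname{dist}(\phi(x_0),\bR^n\setminus V)$ by the telescoping sum $\sum_{i\ge1}|\phi(x_i)-\phi(x_{i-1})|\le C\sum_{i\ge1}(d/2^i)^\alpha$. Your route---extend $\overline\phi$ to $\overline U$, show $\overline\phi(\partial U)\subseteq\bR^n\setminus V$, and apply the H\"older estimate once between $x$ and a nearest boundary point---is the same idea packaged more transparently, and it yields the sharper constant $C_\alpha=C$ rather than $C\sum_{i\ge1}2^{-\alpha i}$. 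Both arguments rest on the same key step: a sequence in $U$ converging to $\partial U$ cannot have its $\phi$-image converging to a point of $V$. You are right that this requires $\phi$ to be a homeomorphism onto $V$ (your $(0,1)\hookrightarrow(-10,10)$ counterexample is apt); the paper uses the same hypothesis implicitly when it asserts that $\{\phi(x_i)\}$ escapes compacts of $V$, though it is not stated in the lemma.
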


\begin{proof}

There exist $C,\alpha\in\bR_{>0}$
such that $\abs{\phi(x)-\phi(y)}\leq C\abs{x-y}^\alpha$ for all $x,y\in U$.
Fix $x_0\in U$ and choose a sequence of points $\{x_i\}_{i\in\mathbb{N}}\subset
U$ converging to a point in $\partial U$, such that 
\begin{align*}
    \abs{x_{i}-x_{i-1}}\le\frac{\operatorname{dist}(x_0,\bR^n\setminus U)}{2^i},
\end{align*}
for any $i\in \mathbb{N}$. 
The sequence of points $\{\phi(x_i)\}_{i\in \mathbb{N}}$ escapes any compact in $V$ and so 
\begin{align*}
    \operatorname{dist}(\phi(x_0),\bR^n\setminus
V)&\le\sum\limits_{i=1}^{\infty}\abs{\phi(x_{i})-\phi(x_{i-1})}\\
&\leq\sum\limits_{i=1}^{\infty}C
\abs{x_i-x_{i-1}}^\alpha \\
&\le\sum\limits_{i=1}^{\infty}C \biggl(\frac{\operatorname{dist}(x_0,\bR^n\setminus
U)}{2^i}\biggr)^\alpha \\
&=C_\alpha  \operatorname{dist}(x_0,\bR^n\setminus U)^\alpha,
\end{align*}
where $C_\alpha:=C\sum\limits_{i=1}^{\infty} (2^{-\alpha})^i> 0$.
\end{proof}

\begin{lemma}\label{lemma-on-fast-decaying}

Let $\phi\colon U\to V$ be a H\"older map between proper open subsets
of $\bR^n$ and let  $s\in\mathcal{S}(V)$. Then, for any $m\in\mathbb{N}_0$
$$\sup\limits_{x\in U}\abs{\frac{s(\phi(x))}{\operatorname{dist}(x,\bR^n\setminus U)^m}}<\infty.$$

\end{lemma}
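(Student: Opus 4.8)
The plan is to combine the distortion estimate of Lemma \ref{lemma-distortion-of-distance} with the distance-characterization of Schwartz functions from Proposition \ref{prop-characterization-of-schwartz-functions}, and then to split the supremum over $U$ according to whether $x$ lies near $\partial U$ or far from it.

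First I would use that $s\in\mathcal{S}(V)$ with $V\subsetneq\bR^n$: applying Proposition \ref{prop-characterization-of-schwartz-functions} to $\operatorname{Ext}_V^{\bR^n}(s)$, for every $k\in\mathbb{N}_0$ there is a constant $M_k<\infty$ with $|s(y)|\le M_k\operatorname{dist}(y,\bR^n\setminus V)^k$ for all $y\in V$; in particular (taking $k=0$) $s$ is bounded, say by $M_0$. Then I would invoke Lemma \ref{lemma-distortion-of-distance} to obtain $\alpha,C_\alpha\in\bR_{>0}$ with $\operatorname{dist}(\phi(x),\bR^n\setminus V)\le C_\alpha\operatorname{dist}(x,\bR^n\setminus U)^\alpha$ for all $x\in U$. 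Substituting $y=\phi(x)$ into the previous inequality yields, for every $k\in\mathbb{N}_0$ and every $x\in U$, the bound $|s(\phi(x))|\le M_kC_\alpha^k\operatorname{dist}(x,\bR^n\setminus U)^{\alpha k}$.

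To finish, fix $m\in\mathbb{N}_0$ and choose $k$ with $\alpha k\ge m$. For $x\in U$ with $\operatorname{dist}(x,\bR^n\setminus U)\le 1$ the exponent $\alpha k-m$ is non-negative, so $|s(\phi(x))|/\operatorname{dist}(x,\bR^n\setminus U)^m\le M_kC_\alpha^k$; for $x\in U$ with $\operatorname{dist}(x,\bR^n\setminus U)>1$ one has $\operatorname{dist}(x,\bR^n\setminus U)^m\ge 1$, hence $|s(\phi(x))|/\operatorname{dist}(x,\bR^n\setminus U)^m\le M_0$. Taking the supremum over $x\in U$ and combining the two bounds gives the claim.

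I do not expect a genuine obstacle here. The only point requiring care — and the reason for the two-case split — is that $\operatorname{dist}(x,\bR^n\setminus U)$ is itself unbounded when $U$ is unbounded, so the single polynomial estimate $\operatorname{dist}(x,\bR^n\setminus U)^{\alpha k}$ cannot on its own be absorbed by dividing by $\operatorname{dist}(x,\bR^n\setminus U)^m$; it is the boundedness of $s$ that controls the region far from $\partial U$.
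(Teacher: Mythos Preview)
Your proof is correct and follows the same strategy as the paper's: combine Lemma~\ref{lemma-distortion-of-distance} with Proposition~\ref{prop-characterization-of-schwartz-functions}. The paper inverts the distortion bound to write $\operatorname{dist}(x,\bR^n\setminus U)\ge (C_\alpha)^{-1/\alpha}\operatorname{dist}(\phi(x),\bR^n\setminus V)^{1/\alpha}$, substitutes into the denominator, and appeals to Proposition~\ref{prop-characterization-of-schwartz-functions} with the (generally non-integer) exponent $m/\alpha$; your version instead bounds the numerator and makes the case split on $\operatorname{dist}(x,\bR^n\setminus U)\lessgtr 1$ explicit, which is exactly the detail the paper suppresses when invoking Proposition~\ref{prop-characterization-of-schwartz-functions} for a non-integer power.
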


\begin{proof}

By Lemma \ref{lemma-distortion-of-distance}, there exist $\alpha,C_\alpha\in\bR_{>0}$
such that for all $x\in U$ $$\operatorname{dist}(x,\bR^n\setminus U)\geq(C_\alpha)^{-1/\alpha}(\operatorname{dist}(\phi(x),\bR^n\setminus
V))^{1/\alpha}.$$

Thus, for any $m\in\mathbb{N}_0$ we have that
\begin{align*}
\sup\limits_{x\in U}\abs{\frac{s(\phi(x))}{\operatorname{dist}(x,\bR^n\setminus
U)^m}}&\leq\sup\limits_{x\in U}\abs{\frac{(C_\alpha)^{m/\alpha}  s(\phi(x))}{\operatorname{dist}(\phi(x),\bR^n\setminus
V)^{m/\alpha}}}\\
&=(C_\alpha)^{m/\alpha} \sup\limits_{y\in V}\abs{\frac{s(y)}{\operatorname{dist}(y,\bR^n\setminus
V)^{m/\alpha}}}<\infty,
\end{align*}
where the last inequality follows from Proposition \ref{prop-characterization-of-schwartz-functions}.
\end{proof}

We now switch settings to simply-connected domains in $\bC$.
\begin{lemma}\label{lemma-on-bounded-derivatives}
Let $\phi\colon U\to V$ be a conformal map between open subsets in $\bC$ and assume $V$ is bounded.
Then, for any $n\in\mathbb{N}_0$, there exists $C_n > 0$ depending on $V$ such that for any $z\in U$,
\[
\abs{\phi^{(n)}(z)}\leq
\frac{C_n}{\operatorname{dist}(z, \partial U)^{n}},
\]
where $\phi^{(n)}=(\frac{\partial}{\partial z})^n\phi$.
\end{lemma}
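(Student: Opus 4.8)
The statement is a classical estimate: for a conformal (univalent holomorphic) map $\phi$ into a bounded domain $V$, the $n$-th derivative is controlled by an inverse power of the distance to the boundary of the source. The standard route is via the Cauchy integral formula on a disc that stays inside $U$. Concretely, fix $z\in U$ and set $r:=\operatorname{dist}(z,\partial U)$. Then the closed disc $\overline{B(z,r/2)}$ is contained in $U$, so Cauchy's estimate gives
\[
\abs{\phi^{(n)}(z)}\leq \frac{n!}{(r/2)^{n}}\sup_{w\in \partial B(z,r/2)}\abs{\phi(w)}.
\]
Since $\phi(U)\subset V$ and $V$ is bounded, $\sup_{w}\abs{\phi(w)}\leq \operatorname{diam}(V)+\abs{w_0}$ for any fixed basepoint, i.e. it is bounded by a constant depending only on $V$. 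Taking $C_n:=2^n\,n!\cdot \sup_{v\in V}\abs{v}$ (or $2^n n!\operatorname{diam}(V)$ after translating so that $0\in V$, which costs nothing since $\phi^{(n)}$ for $n\geq 1$ is unchanged by translating the target) then yields the claimed bound $\abs{\phi^{(n)}(z)}\leq C_n/r^n$. For $n=0$ the statement is just boundedness of $\phi$, which holds trivially since $V$ is bounded.

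The one point to be slightly careful about is the case $n=0$, where the asserted inequality $\abs{\phi(z)}\leq C_0$ is automatic but does not follow from the Cauchy-estimate bookkeeping in the same clean way; one simply notes $\abs{\phi(z)}\leq \sup_{v\in V}\abs{v}=:C_0<\infty$ because $V$ is bounded. Another harmless subtlety: the lemma writes $\phi^{(n)}=(\partial/\partial z)^n\phi$, i.e. complex derivatives, and holomorphicity of $\phi$ means these agree with the usual complex iterated derivative, so Cauchy's inequality applies verbatim; no use of univalence is actually needed, only holomorphy and boundedness of the image.

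The proof therefore has essentially no obstacle — it is a one-line application of the Cauchy estimates on the disc $B(z,\operatorname{dist}(z,\partial U)/2)$, with the radius $r/2$ chosen just to keep the boundary circle safely inside $U$. If one wanted the cleanest possible constant one could instead integrate over $\partial B(z,\lambda r)$ for $\lambda\uparrow 1$ and optimize, but for the purposes of this lemma any fixed $\lambda\in(0,1)$ works and $\lambda=1/2$ is the most transparent choice. I would write the proof in three sentences: recall $r:=\operatorname{dist}(z,\partial U)$ so $\overline{B(z,r/2)}\subset U$; apply the Cauchy integral formula for $\phi^{(n)}$ over $\partial B(z,r/2)$ together with $\abs{\phi}\leq \sup_{v\in V}\abs{v}$ on $U$; read off $C_n=2^n n!\sup_{v\in V}\abs v$.
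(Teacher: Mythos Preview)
Your proof is correct and essentially identical to the paper's: both fix $z\in U$, apply the Cauchy integral formula over the circle of radius $\tfrac{1}{2}\operatorname{dist}(z,\partial U)$ centered at $z$, and bound $|\phi|$ by a constant depending only on $V$ using that $V$ is bounded. The paper writes out the integral explicitly rather than quoting the Cauchy estimate, but the argument and the resulting constant $C_n = 2^n n!\sup_{v\in V}|v|$ are the same.
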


\begin{proof}
Fix $z\in U$. Set the curve $\gamma\subset U$ to be the circle of radius $\frac{\operatorname{dist}(z,\partial U)}{2}$ centered at $z$. Using Cauchy's integral formula, for any $n\in\mathbb{N}_0$ we have
\begin{align*}
    \abs{\phi^{(n)}(z)}&=\abs{\frac{n!}{2\pi i}\oint_\gamma \frac{\phi(w)}{(w-z)^{n+1}}dw}\leq \frac{n!}{2\pi }\oint_\gamma \frac{\abs{\phi(w)}}{\abs{w-z}^{n+1}}dw\\
    &= \frac{n!}{2\pi }\oint_\gamma \frac{2^{n+1}\abs{\phi(w)}}{\operatorname{dist}(z,\partial U)^{n+1}}dw\leq 
\frac{C_n}{\operatorname{dist}(z,\partial U)^{n}},
\end{align*}
 where in the last inequality we used the facts that $V$ is bounded (and so $\abs{\phi(w)}$ is bounded) and that the length of $\gamma$ is $\pi\operatorname{dist}(z,\partial
U)$.
\end{proof}

\begin{theorem}\label{conformalbiholderbetweenbounded}Let $\phi\colon U\to V$ be a conformal
bi-H\"older map between bounded domains in $\bC$. Then, $\mathcal{S}(U)\cong\mathcal{S}(V)$. 
\end{theorem}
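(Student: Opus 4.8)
The plan is to show that $\phi^*$ maps $\mathcal{S}(V)$ into $\mathcal{S}(U)$ and that $(\phi^{-1})^*$ maps $\mathcal{S}(U)$ into $\mathcal{S}(V)$; by Lemma \ref{lemma-enough-to-pull-functions-to-get-F-iso} this gives the Fréchet space isomorphism. By symmetry (the hypotheses on $\phi$ are symmetric in $U$ and $V$), it suffices to prove that $\phi^*(\mathcal{S}(V)) \subset \mathcal{S}(U)$. So fix $s \in \mathcal{S}(V)$ and set $f := s \circ \phi$; we must verify $f \in \mathcal{S}(U)$. Since $U$ is bounded, the weight factors $x^l$ in the seminorms $\|\cdot\|_{k,l}$ are harmless, so by Proposition \ref{prop-characterization-of-schwartz-functions} (applied on $U$, which is bounded hence proper) it is enough to show that for every multi-index $k$ and every $m \in \mathbb{N}_0$,
\[
\sup_{x \in U} \abs{ \frac{ f^{(k)}(x) }{ \operatorname{dist}(x, \partial U)^m } } < \infty.
\]

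First I would handle the case $k = 0$: this is exactly Lemma \ref{lemma-on-fast-decaying}, since $\phi$ (being conformal bi-Hölder, in particular Hölder) satisfies its hypotheses and $s \in \mathcal{S}(V)$. For the derivatives, the key point is that $\phi$ is conformal, so the chain rule for $f = s\circ\phi$ expresses $f^{(k)}$ (here $k$ ranges over multi-indices in $\mathbb{N}_0^2$, thinking of $U \subset \bR^2$) as a finite sum of terms of the form $(\partial^\beta s)(\phi(x)) \cdot P(\phi'(x), \phi''(x), \dots, \phi^{(j)}(x))$, where $P$ is a polynomial in the complex derivatives of $\phi$ up to order $|k|$ — using that $\phi$ is holomorphic so that $\partial\phi/\partial\bar z = 0$ and all real partials of $\phi$ are controlled by the $\phi^{(n)} = (\partial/\partial z)^n\phi$. (This is the Faà di Bruno / Cauchy–Riemann computation; I would state it as a routine fact.) Now Lemma \ref{lemma-on-bounded-derivatives} bounds each $\abs{\phi^{(n)}(x)}$ by $C_n / \operatorname{dist}(x,\partial U)^n$, so the polynomial factor is bounded by $C / \operatorname{dist}(x,\partial U)^{N}$ for some $N = N(k)$ and constant $C$. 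Meanwhile $\partial^\beta s$ is again a Schwartz function on $V$ (a partial derivative of a Schwartz function is Schwartz), so Lemma \ref{lemma-on-fast-decaying} applied to $\partial^\beta s$ in place of $s$ gives
\[
\sup_{x \in U} \abs{ \frac{ (\partial^\beta s)(\phi(x)) }{ \operatorname{dist}(x,\partial U)^{m+N} } } < \infty
\]
for every $m$. Multiplying the two estimates yields the required bound on $f^{(k)}(x) / \operatorname{dist}(x,\partial U)^m$, so $f \in \mathcal{S}(U)$.

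Running the identical argument with $\phi^{-1}$ (which is also conformal and bi-Hölder, and $U$ is bounded so $\phi^{-1}$ maps into a bounded set) gives $(\phi^{-1})^*(\mathcal{S}(U)) \subset \mathcal{S}(V)$, and Lemma \ref{lemma-enough-to-pull-functions-to-get-F-iso} finishes the proof. The main obstacle is the derivative bookkeeping in the middle step: one needs the chain-rule expansion to involve only the \emph{holomorphic} derivatives $\phi^{(n)}$ (not mixed real partials, which in general would not be controlled), and this is precisely what conformality buys — without it Lemma \ref{lemma-on-bounded-derivatives} would have no analogue and the estimate would fail, consistent with Example \ref{example-of-all-evil}. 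Everything else is a direct assembly of the four preceding lemmas.
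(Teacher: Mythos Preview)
Your proof is correct and follows essentially the same route as the paper: reduce via Lemma~\ref{lemma-enough-to-pull-functions-to-get-F-iso} to showing $\phi^*(\mathcal{S}(V))\subset\mathcal{S}(U)$, expand $(s\circ\phi)^{(k)}$ by the chain rule into terms $(s^{(l)}\circ\phi)\cdot\phi^{(l_1)}_{m_1}\cdots\phi^{(l_t)}_{m_t}$, bound the $\phi$-factors by Lemma~\ref{lemma-on-bounded-derivatives} (using holomorphy so real partials reduce to complex derivatives), and absorb the resulting negative powers of $\operatorname{dist}(x,\partial U)$ via Lemma~\ref{lemma-on-fast-decaying} applied to $s^{(l)}\in\mathcal{S}(V)$. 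The only cosmetic difference is that the paper phrases the conclusion as ``each partial derivative tends to zero at $\partial U$'' (dividing by $\operatorname{dist}(z,\partial U)^1$) rather than invoking Proposition~\ref{prop-characterization-of-schwartz-functions} for all $m$, but the estimate needed is identical.
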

\begin{proof}
Recall that for holomorphic
maps, $\|D\phi(z)\| = |\phi'(z)|$, where $D\phi$ is the differential of $\phi$
and $||\cdot||$ is the usual operator norm. Thus, Lemma \ref{lemma-on-bounded-derivatives} implies that for any multi index $k\in(\mathbb{N}_0)^2$,
there exists $C_k\in\bR_{>0}$ such that for any $z\in U$,
\begin{align}\label{eq-derivbound}
    \abs{\phi^{(k)}(z)}\leq
\frac{C_k}{\operatorname{dist}(z,\partial U)^{|k|}}
\end{align}
and for any $z'\in V$,
\begin{align}\label{eq-derivboundinv}
    \abs{(\phi^{-1})^{(k)}(z')}\leq
\frac{C_k}{\operatorname{dist}(z',\partial V)^{|k|}}.
\end{align} 

By Lemma \ref{lemma-enough-to-pull-functions-to-get-F-iso}, it is enough
to show that $s\in\mathcal{S}(V)$ implies $s\circ\phi\in\mathcal{S}(U)$
and that $\tilde s\in\mathcal{S}(U)$ implies $\tilde s\circ(\phi^{-1})\in\mathcal{S}(V)$.
We only show the first part. The second follows in the exact same way. 

Fix some $s\in\mathcal{S}(V)$.
As $U$ is bounded, it is enough to check that all the partial derivatives
of $s\circ\phi$ tend to zero when $z$ approaches the boundary of $U$.
Set $k\in(\mathbb{N}_0)^2$. Using the chain rule and the Leibniz rule, one
easily sees that the partial derivative $(s\circ\phi)^{(k)}$ is a finite
linear combination (with constant coefficients) of terms of the form $(s^{(l)}\circ\phi)\phi^{(l_{1})}_{m_1}\cdots\phi^{(l_t)}_{m_t}$,
where $l,l_1,\dots, l_t\in(\mathbb{N}_0)^2$ are such that $|l|,|l_1|,\dots,|l_t|\leq|k|$
and $m_1,\dots,m_t \in \{1,2\}$.
Recall that $\phi_{m_i}^{(l_i)}$ is the partial derivative corresponding
to the multi-index $l_i$ of the coordinate function $\phi_{m_i}$.
Thus, it is enough to show that every such a term tends to zero as $z$ approaches
the boundary of $U$. Indeed, 
\begin{align*}
\sup\limits_{z\in U}\abs{\frac{(s^{(l)}\circ\phi(z))\phi^{(l_{1})}_{m_1}(z)\cdots\phi^{(l_t)}_{m_t}(z)}{\operatorname{dist}(z,\bC\setminus
U)}}\leq\sup\limits_{z\in U}\abs{\frac{(\prod\limits_{i=1}^{t} C_{l_i})(s^{(l)}(\phi(z)))}{\operatorname{dist}(z,\bC\setminus
U)^{\sum\limits_{i=1}^{t}|l_i|+1}}}<\infty,
\end{align*}
where the first inequality follows from $\eqref{eq-derivbound}$ and the second
follows from the fact that $s^{(l)}\in\mathcal{S}(V)$ and from Lemma \ref{lemma-on-fast-decaying}.
\end{proof}

\begin{corollary}\label{main-theorem-forbounded-quasidiscs}
Any two bounded quasidiscs are Schwartz equivalent.
\end{corollary}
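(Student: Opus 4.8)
The plan is to deduce Corollary \ref{main-theorem-forbounded-quasidiscs} directly from Theorem \ref{conformalbiholderbetweenbounded} together with Proposition \ref{prop-on-bi-holo-bi-holder-for-quasidiscs} and the transitivity of Schwartz equivalence. The overall strategy is to show that every bounded quasidisc is Schwartz equivalent to the unit disc $\mathbb D$, and then to chain two such equivalences.

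Concretely, let $U$ and $V$ be two bounded quasidiscs. By Proposition \ref{prop-on-bi-holo-bi-holder-for-quasidiscs}, there is a conformal map $\phi_U\colon\mathbb D\to U$ that extends to a bi-H\"older map of $\bC$ to itself; in particular $\phi_U\colon\mathbb D\to U$ is a conformal bi-H\"older map between bounded domains, so Theorem \ref{conformalbiholderbetweenbounded} gives $\mathcal S(\mathbb D)\cong\mathcal S(U)$, and moreover the isomorphism is induced by the $C^\infty$-diffeomorphism $\phi_U$ (being conformal, $\phi_U$ is in particular a smooth diffeomorphism onto its image, with smooth inverse since $\phi_U'$ is nonvanishing). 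Likewise there is a conformal bi-H\"older $\phi_V\colon\mathbb D\to V$ with $\mathcal S(\mathbb D)\cong\mathcal S(V)$ induced by $\phi_V$. The composition $\phi_V\circ\phi_U^{-1}\colon U\to V$ is then a $C^\infty$-diffeomorphism whose pullback is the composition of two Fr\'echet-space isomorphisms, hence itself a Fr\'echet-space isomorphism $\mathcal S(V)\to\mathcal S(U)$. Therefore $U$ and $V$ are Schwartz equivalent.

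There is essentially no obstacle here, since all the work has already been done: the content is packaged in Theorem \ref{conformalbiholderbetweenbounded} (which handles the analytic estimates on derivatives of conformal maps and the behaviour of the distance-to-the-boundary function under H\"older maps) and in Proposition \ref{prop-on-bi-holo-bi-holder-for-quasidiscs} (which supplies, via Theorems \ref{thm-quasisymmetryextension} and \ref{thm-mori} and the Riemann mapping theorem, the crucial fact that the Riemann map of a bounded quasidisc is bi-H\"older). The only mild point to check is that Schwartz equivalence is an equivalence relation --- reflexivity and symmetry are immediate, and transitivity follows because a composition of $C^\infty$-diffeomorphisms is a $C^\infty$-diffeomorphism and a composition of Fr\'echet-space isomorphisms is a Fr\'echet-space isomorphism, with $(\psi\circ\phi)^*=\phi^*\circ\psi^*$. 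With these observations the corollary is immediate.

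\begin{proof}
Schwartz equivalence is an equivalence relation on open subsets of $\bR^n$: reflexivity and symmetry are clear, and transitivity follows from the facts that a composition of $C^\infty$-diffeomorphisms is a $C^\infty$-diffeomorphism, that $(\psi\circ\phi)^*=\phi^*\circ\psi^*$, and that a composition of isomorphisms of Fr\'echet spaces is again such an isomorphism. Hence it suffices to show that every bounded quasidisc $U$ is Schwartz equivalent to $\mathbb D$. By Proposition \ref{prop-on-bi-holo-bi-holder-for-quasidiscs} there exists a conformal map $\phi\colon\mathbb D\to U$ that extends to a bi-H\"older map of $\bC$; in particular $\phi$ is a conformal bi-H\"older map between the bounded domains $\mathbb D$ and $U$. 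Since $\phi$ is conformal it is a $C^\infty$-diffeomorphism of $\mathbb D$ onto $U$, and by Theorem \ref{conformalbiholderbetweenbounded} the induced map $\phi^*\colon\mathcal S(U)\to\mathcal S(\mathbb D)$ is an isomorphism of Fr\'echet spaces. Thus $\mathbb D$ and $U$ are Schwartz equivalent, and the corollary follows.
\end{proof}
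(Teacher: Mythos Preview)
Your proof is correct and follows essentially the same approach as the paper: reduce to showing that every bounded quasidisc is Schwartz equivalent to $\mathbb D$, and deduce this immediately from Proposition \ref{prop-on-bi-holo-bi-holder-for-quasidiscs} and Theorem \ref{conformalbiholderbetweenbounded}. The paper's proof is simply a one-line version of yours; your added remarks on transitivity of Schwartz equivalence are correct but routine.
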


\begin{proof}
It is enough to prove that any bounded quasidisc is Schwartz equivalent to the unit disc. This follows immediately from Proposition \ref{prop-on-bi-holo-bi-holder-for-quasidiscs} and Theorem \ref{conformalbiholderbetweenbounded}.\end{proof}

\section{Unbounded domains whose boundaries are quasicircles}\label{section-on-unbounded-quasidiscs}
Having shown Corollary \ref{main-theorem-forbounded-quasidiscs} we now proceed to show the case when the domains in question are not necessarily bounded.
\begin{lemma}\label{lemma-mobiuspreservesschwartz}
If $U,V \subset \bC$ are domains such that there exists a M\"obius transformation from $U$ onto $V$, then $\mathcal{S}(U) \cong \mathcal{S}(V)$. 
\end{lemma}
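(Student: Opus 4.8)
The plan is to factor a general Möbius transformation into affine maps and a single inversion $\iota(z)=1/z$, and to use that Schwartz equivalence is transitive (a composition of Fréchet-space isomorphisms is again one). If $\phi(z)=\frac{az+b}{cz+d}$ with $c=0$, then $\phi$ is affine. If $c\neq 0$, then $\phi=\psi_3\circ\iota\circ\psi_1$ with $\psi_1(z)=cz+d$ and $\psi_3(u)=\frac{a}{c}-\frac{ad-bc}{c}\,u$; the pole $-d/c$ of $\phi$ cannot lie in $U$ (otherwise $\phi$ would send a point of $U$ to $\infty\notin\bC$), so $0\notin\psi_1(U)$, the map $\iota$ is a genuine diffeomorphism on $\psi_1(U)$, and all the intermediate sets are domains in $\bC$. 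It therefore suffices to treat two cases separately: affine bijections of $\bC$, and the inversion on a domain that avoids the origin.

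For an affine bijection $\psi(z)=\alpha z+\beta$ of $\bC=\bR^2$ (with $\alpha\neq 0$): it maps $\mathcal{S}(\bR^2)$ onto itself, since a polynomial weight $x^l$ pulls back to a polynomial of degree $|l|$ in the new coordinates whose product with the corresponding derivative is controlled by finitely many semi-norms of a Schwartz function; moreover $\psi$ carries $\bR^2\setminus V$ bijectively onto $\bR^2\setminus U$ and, being a diffeomorphism, carries flat functions to flat functions. Hence $\psi^*(\mathcal{S}(V))\subseteq\mathcal{S}(U)$ and likewise for $(\psi^{-1})^*$, so $\psi$ induces a Schwartz equivalence by Lemma \ref{lemma-enough-to-pull-functions-to-get-F-iso}. (When the domains are bounded this is also a special case of Theorem \ref{conformalbiholderbetweenbounded}.)

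The crux is the inversion. Let $W\subset\bC\setminus\{0\}$ be a domain, $W'=\iota(W)$ (so $0\notin W'$ as well), and fix $s\in\mathcal{S}(W')$; by symmetry and Lemma \ref{lemma-enough-to-pull-functions-to-get-F-iso} it is enough to show $s\circ\iota\in\mathcal{S}(W)$. Since $\iota^{(n)}(z)=(-1)^n n!\,z^{-n-1}$ and $0\in\bC\setminus W$, we have $|\iota^{(n)}(z)|\leq n!\,\operatorname{dist}(z,\bC\setminus W)^{-(n+1)}$ on $W$. Expanding $(s\circ\iota)^{(k)}$ by the Faà di Bruno/Leibniz rule exactly as in the proof of Theorem \ref{conformalbiholderbetweenbounded}, it is a finite linear combination of terms $(s^{(j)}\circ\iota)\,\iota^{(j_1)}_{m_1}\cdots\iota^{(j_t)}_{m_t}$ with $\sum_i j_i=k$ and $|j|=t$. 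For any multi-index $l$, bounding the $\iota$-derivative factors and then substituting $w=1/z$ (so $|z|=1/|w|$ and $z\in W\iff w\in W'$) reduces $\sup_{z\in W}|z^l(s^{(j)}\circ\iota)(z)\,\iota^{(j_1)}_{m_1}(z)\cdots\iota^{(j_t)}_{m_t}(z)|$ to a constant times $\sup_{w\in W'}|w|^{N}|s^{(j)}(w)|$ with $N=|k|+t-|l|\in\bZ$. If $N\geq 0$ this is finite because $s^{(j)}\in\mathcal{S}(\bR^2)$ and $|w|^N$ is dominated by finitely many monomials; if $N<0$, write $|w|^N=|w|^{-|N|}$ and use $\operatorname{dist}(w,\bC\setminus W')\leq|w|$ together with Proposition \ref{prop-characterization-of-schwartz-functions}(2) applied to $s^{(j)}\in\mathcal{S}(W')$. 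Hence every Schwartz semi-norm of $s\circ\iota$ on $W$ is finite.

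It then remains to check that the extension of $s\circ\iota$ by zero to $\bR^2$ is smooth and flat on $\bR^2\setminus W$. Away from the origin this is immediate, since $s$ vanishes off $W'$ and $\iota$ is a diffeomorphism of $\bR^2\setminus\{0\}$, so near such a point the extension simply equals $s\circ\iota$; at the origin, the rapid decay of $s$ at $\infty$ combined with the derivative bounds above forces $s(\iota(z))$ and all its partials to vanish to infinite order as $z\to0$, giving a smooth flat extension there. I expect the behavior at $\infty$ to be the main obstacle, or at least the point requiring genuine care: unlike the bounded-domain situation of Theorem \ref{conformalbiholderbetweenbounded}, here $W$ may be unbounded, so one cannot just invoke ``all derivatives tend to $0$ at the boundary'' and must instead track the polynomial weights through the substitution $w=1/z$, handling the origin (which is mapped to $\infty$) separately from the point at infinity (which is mapped to $0$).
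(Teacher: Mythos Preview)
Your proof is correct and follows essentially the same strategy as the paper: reduce to the generators $z\mapsto az+b$ and $z\mapsto 1/z$, handle the affine case trivially, and for the inversion expand $(s\circ\iota)^{(k)}$ by the chain/Leibniz rule and control each term using the explicit derivatives $\iota^{(n)}(z)=(-1)^n n!\,z^{-n-1}$ together with the Schwartz bounds on $s$. The only cosmetic difference is that you bound all the semi-norms uniformly through the substitution $w=1/z$ (splitting on the sign of $N$), whereas the paper argues by cases according to whether one is near a boundary point $z_0\neq 0$, near $z_0=0$, or at infinity; the content is the same.
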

\begin{proof}
The group of M\"obius transformations is generated by the maps $z \mapsto az + b$, for $a,b \in \bC$ and $z \mapsto \frac{1}{z}$.
It is clear that maps of the first type preserve the Schwartz space.  So it suffices to show that $A(z) = \frac{1}{z}$ preserves the Schwartz space.
By Lemma \ref{lemma-enough-to-pull-functions-to-get-F-iso}, it is enough to check that $s\in\mathcal{S}(U)$ implies $s\circ A^{-1}\in\mathcal{S}(A(U))$ and $s\in\mathcal{S}(A(U))$ implies $s\circ A\in\mathcal{S}(U)$.  Since $A^{-1} = A$, we only need to show one case.

Fix $s\in\mathcal{S}(A(U))$. 
If $A(U)$ is bounded, or equivalently if $U$ is bounded away from the origin, then $A$ has bounded derivatives of all orders and $s \circ A \in \mathcal{S}(U)$.
Suppose $A(U)$ is unbounded.
It is enough to show that all the partial derivatives of $s\circ A$ tend to zero as $z$ approaches any point in $\partial U$ and that they have Schwartz-like decay at $\infty$ (in the case when
$U$ is unbounded).
Since $U$ and $A(U)$ are subsets of $\bC$, we have that $0$ and $\infty$ are in $\bC \setminus U$ and $\widehat\bC \setminus A(U)$, respectively.

Set $z_0\in\partial U$ and let $\{z_1,z_2,\dots\}$ be a sequence of points in $U$ converging to $z_0$.
For $i\in\mathbb{N}$ set $y_i:=A(z_i)$. The sequence $\{y_1,y_2\dots\}$ converges to $y_0$ (note that $y_0$ may be $\infty$ and that this happens if and only if $z_0=0$).
Now set $k\in(\mathbb{N}_0)^2$. As before, using the chain rule and the Leibniz rule, one sees that the partial derivative $(s\circ A)^{(k)}$ is a finite linear combination (with constant coefficients) of terms of the form
$$(s^{(l)}\circ A) A^{(l_{1})}_{m_1}\cdots A^{(l_t)}_{m_t},$$
where $l,l_1,\dots,l_t\in(\mathbb{N}_0)^2$, $1\leq|l|,|l_1|,\dots,|l_t|\leq|k|$ and $m_1,\dots,m_t \in \{1,2\}$.
Thus it is enough to show that the sequence
\begin{align*}
    \{(s^{(l)}\circ A(z_i)) A^{(l_{1})}_{m_1}(z_i)\cdots A^{(l_t)}_{m_t}(z_i)\}_{i \in \mathbb N},
\end{align*}
which can be rewritten as
\begin{align}\label{eq-thesequence}
    \{(s^{(l)}(y_i)) A^{(l_{1})}_{m_1}(z_i)\cdots A^{(l_t)}_{m_t}(z_i)\}_{i \in \mathbb N},
\end{align}
tends to zero. Since $s^{(l)}\in\mathcal{S}(A(U))$,
if $z_0\neq0$ (i.e., $y_0\neq\infty$), then in a neighborhood of $z_0$ all the derivatives
of $A$ are bounded and the sequence \eqref{eq-thesequence} tends to zero. 
Otherwise $z_0=0$ and, for any $|l_j|\geq1$ and for $z$ close enough to $0$,

\begin{align}\label{eq-boundonratinal}
    \text{ }\abs{A^{(l_{j})}(z)}=\frac{|l_j|!}{|z|^{|l_j|+1}}\leq\abs{A(z)}^{|l_j|+2},
\end{align}
since $A(z)=\frac{1}{z}$.
In particular \eqref{eq-boundonratinal} implies that, for sufficiently large $i\in\mathbb{N}$, we have 

\begin{align}\label{eq-boundonratinalfinal}
    \text{ }\abs{A^{(l_{j})}_{m_j}(z_i)}\leq |y_i|^{|l_j|+2}. 
\end{align}
Recall once again that $s^{(l)}\in\mathcal{S}(A(U))$ and we conclude using \eqref{eq-boundonratinalfinal} that \eqref{eq-thesequence} tends to zero. 

To see the Schwartz-like decay at $\infty$ (if $U$ is unbounded), note that
the partial derivatives of $A$ are uniformly bounded away from the origin.
So if $k_1,k_2 \in \mathbb N_0$, then there exists a constant $C >0$ so that
for any $(x,y) \in \bR^2$ and $z$ with $\abs{z}>1$,
\begin{align*}
     |x^{k_1}y^{k_2} s^{(l)}(A(x,y))   A^{(l_1)}_{m_1}(x,y)\cdots A^{(l_t)}_{m_t}(x,y)|
&\le C|x^{k_1}y^{k_2} s^{(l)}( A(x,y))|\\
&\leq C\abs{z}^{k_1+k_2}\abs{s^{(l)}\bigg (\frac{1}{z}\bigg )}.
 \end{align*}
 This goes to $0$ as the origin lies in the boundary of $A(U)$ and $s^{(l)}\in\mathcal{S}(A(U))$.
Thus we have proved that $s\circ A\in\mathcal{S}(U)$.
\end{proof}
\begin{theorem}\label{theorem-quasidiscs-are-schwartz-equivalent}
Any two quasidiscs are Schwartz equivalent.
\end{theorem}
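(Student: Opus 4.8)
The plan is to reduce the general statement to the bounded case already established in Corollary \ref{main-theorem-forbounded-quasidiscs}. By that corollary, all bounded quasidiscs lie in a single Schwartz equivalence class, so it suffices to show that an arbitrary \emph{unbounded} quasidisc $U$ is Schwartz equivalent to some bounded quasidisc. Since Schwartz equivalence is an equivalence relation (both $\phi^*$ and $(\phi^{-1})^*$ being continuous by Lemma \ref{lemma-enough-to-pull-functions-to-get-F-iso}, composed appropriately), transitivity will then finish the proof.

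First I would use Remark \ref{remark-quasidisc-cannot-be-dense}: a quasidisc is never dense in $\bC$, so there is a point $p \in \bC \setminus \overline{U}$. Consider the M\"obius transformation $A(z) = \frac{1}{z-p}$. This sends $U$ to a domain $A(U) \subset \bC$ that is \emph{bounded}: indeed, since $p$ has a positive distance $r$ to $\overline U$, every $z \in U$ satisfies $|z-p| \ge r$, hence $|A(z)| \le 1/r$. Moreover $A$ is a conformal automorphism of $\widehat\bC$ that carries the quasicircle $\partial U$ to the quasicircle $\partial(A(U))$ — here I would invoke that quasicircles are preserved under M\"obius maps (Proposition \ref{prop-quasidisc-is-preserved-under-conformal} together with the fact that M\"obius maps are quasisymmetries of $\widehat\bC$, or more directly Theorem \ref{thm-quasisymmetryextension}-type extension results), so $A(U)$ is a \emph{bounded} quasidisc. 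By Lemma \ref{lemma-mobiuspreservesschwartz}, $\mathcal{S}(U) \cong \mathcal{S}(A(U))$.

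Then by Corollary \ref{main-theorem-forbounded-quasidiscs}, $\mathcal{S}(A(U)) \cong \mathcal{S}(\mathbb{D})$, and combining the two isomorphisms (each realized by a smooth diffeomorphism inducing a Fr\'echet space isomorphism, and composing diffeomorphisms) gives $\mathcal{S}(U) \cong \mathcal{S}(\mathbb{D})$. Applying this to two arbitrary quasidiscs $U_1, U_2$ — splitting into the bounded and unbounded cases for each — yields $\mathcal{S}(U_1) \cong \mathcal{S}(\mathbb{D}) \cong \mathcal{S}(U_2)$, which is the claim.

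The only genuine subtlety is the quasicircle-preservation step: one must be slightly careful that $A$, which a priori only fixes things up to the point at infinity, actually maps $\partial U$ to a \emph{bounded} quasicircle in the sense of Definition \ref{definitionquasistuff}. This is where I expect the main (though minor) obstacle to lie, and I would handle it by passing to the description of quasicircles via quasisymmetries of $\widehat\bC$, under which M\"obius invariance is transparent, and then noting that the image is bounded away from $\infty$ by the distance argument above so that its closure in $\widehat\bC$ misses $\infty$. Everything else is a direct citation of the lemmas already in hand.
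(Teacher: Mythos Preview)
Your proof is correct and follows essentially the same route as the paper: pick $p\in\bC\setminus\overline{U}$ via Remark \ref{remark-quasidisc-cannot-be-dense}, apply the M\"obius map $z\mapsto 1/(z-p)$ to obtain a bounded quasidisc, invoke Lemma \ref{lemma-mobiuspreservesschwartz} for the Schwartz equivalence, and conclude by Corollary \ref{main-theorem-forbounded-quasidiscs}. The paper simply cites M\"obius invariance of quasidiscs as a well-known fact rather than unpacking it as you do, but the argument is otherwise identical.
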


\begin{proof}
Let $U\subset\mathbb{C}$ be an arbitrary quasidisc. By Corollary \ref{main-theorem-forbounded-quasidiscs}, it is enough to find a bounded quasidisc $U'\subset\bC$ such that $U$ and $U'$ are Schwartz equivalent.

By Remark \ref{remark-quasidisc-cannot-be-dense}, there exists a point $z'\in\bC\setminus\overline U$. Let $A(z):=\frac{1}{z-z'}$. The map $A$ is a M\"obius transformation of $\widehat \bC$ such that $A(z')=\infty$. It is a well-known fact that M\"obius transformations map quasidiscs to quasidiscs.
Therefore, the set $A(U)$ is a quasidisc and moreover it is bounded.
By Lemma \ref{lemma-mobiuspreservesschwartz}, $S(U) \cong S(A(U))$, which proves the theorem.\end{proof}

We now show that unbounded domains whose boundaries are bounded quasicircles are all Schwartz equivalent to $\bC \setminus \overline{\mathbb D}$.  This case is very similar to when a domain is a bounded quasidisc.  However, now one must consider the behavior of a Schwartz function composed with a Riemann map near $\infty$.

\begin{theorem}\label{thm-unbouded-domains-whose-boundaries-are-bounded-quasicircles}
If $U$ is an unbounded domain whose boundary is a bounded quasicircle, then $\mathcal{S}(U) \cong \mathcal{S}(\bC \setminus \overline{\mathbb D})$.
\end{theorem}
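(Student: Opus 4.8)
The plan is to realize the Schwartz equivalence through a suitably normalized Riemann map and to treat the two ``ends'' of $U$ — the quasicircle boundary and the point at infinity — by two different mechanisms. Since $\partial U$ is a bounded Jordan curve, the bounded complementary component $D$ of $\partial U$ is a quasidisc, and because $U$ is a domain disjoint from $\partial U$ one checks that $U=\bC\setminus\overline D$; in particular $\bC\setminus U$ is compact. Both $\widehat\bC\setminus\overline D$ and $\widehat\bC\setminus\overline{\mathbb D}$ are simply connected subdomains of $\widehat\bC$, properly contained in $\widehat\bC$ and containing $\infty$, so the Riemann mapping theorem gives a conformal map between them, which after composing with an automorphism of $\widehat\bC\setminus\overline{\mathbb D}$ may be normalized to fix $\infty$; restricting to $\bC$ yields a conformal bijection $\psi\colon U\to\bC\setminus\overline{\mathbb D}$. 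By Lemma \ref{lemma-enough-to-pull-functions-to-get-F-iso} it suffices to prove $\psi^*(\mathcal S(\bC\setminus\overline{\mathbb D}))\subseteq\mathcal S(U)$ and the analogous inclusion for $\psi^{-1}$, and by symmetry I would carry out only the first.

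Two facts about $\psi$ drive the estimates. Near $\partial U$: both $U$ and $\bC\setminus\overline{\mathbb D}$ have a single boundary component, a bounded quasicircle, so Theorem \ref{thm-qsextensionmultiplyconnected} applies and $\psi$ extends to a quasisymmetry $\Psi$ of $\bC$; Mori's theorem (Theorem \ref{thm-mori}) makes $\Psi$, hence $\psi=\Psi|_U$, a Hölder map on $U$. Near $\infty$: since $\psi$ fixes $\infty$ and is conformal there as a map of domains in $\widehat\bC$, in the coordinate $1/z$ it is holomorphic with nonzero derivative, so $\psi(z)=az+b+O(1/z)$ with $a\neq 0$ for $|z|$ large; thus $|\psi(z)|\ge c|z|$ there and every derivative of $\psi$ of order $\ge 1$ is bounded on $\{|z|>R_0\}$. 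Now fix $s\in\mathcal S(\bC\setminus\overline{\mathbb D})$ and expand $(s\circ\psi)^{(k)}$, exactly as in the proof of Theorem \ref{conformalbiholderbetweenbounded}, into a finite linear combination of terms $(s^{(l)}\circ\psi)\,\psi^{(l_1)}_{m_1}\cdots\psi^{(l_t)}_{m_t}$ with $|l|,|l_i|\le|k|$ and each $|l_i|\ge 1$ when $|k|\ge1$. To see that the zero-extension of $s\circ\psi$ lies in $\mathcal S(\bR^2)$ and is flat on $\bC\setminus U$, split $U$ into the bounded collar $\{\operatorname{dist}(z,\partial U)<1\}$ and its complement. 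On the collar $\psi$ takes values in a bounded set, so a Cauchy integral estimate over the circle of radius $\tfrac12\operatorname{dist}(z,\partial U)$ gives $|\psi^{(n)}(z)|\le C_n\operatorname{dist}(z,\partial U)^{-n}$ — the local form of Lemma \ref{lemma-on-bounded-derivatives}, the global form being unavailable because $\bC\setminus\overline{\mathbb D}$ is unbounded — and combining this with Lemma \ref{lemma-on-fast-decaying} applied to the Hölder map $\psi\colon U\to\bC\setminus\overline{\mathbb D}$ (using $\operatorname{dist}(z,\bC\setminus U)=\operatorname{dist}(z,\partial U)$ for $z\in U$ and $s^{(l)}\in\mathcal S(\bC\setminus\overline{\mathbb D})$) bounds each term by a constant times $|s^{(l)}(\psi(z))|\operatorname{dist}(z,\partial U)^{-m-\sum|l_i|}$, which is finite for every $m$; this is exactly flatness at $\partial U$. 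Off the collar, on $\{|z|>R_0\}$ the $\psi$-factors are bounded and $|s^{(l)}(\psi(z))|\le C_N|\psi(z)|^{-N}\le C_N'|z|^{-N}$ for all $N$, since $s^{(l)}$ has rapid decay at $\infty$; hence $|z^p(s\circ\psi)^{(k)}(z)|\to 0$, the Schwartz decay at $\infty$. On the remaining bounded piece $\{\operatorname{dist}(z,\partial U)\ge1,\ |z|\le R_0\}$ everything is smooth and bounded. The reverse inclusion is identical, using $\psi^{-1}(w)=a^{-1}w+b'+O(1/w)$ near $\infty$, that $\psi^{-1}=\Psi^{-1}|_{\bC\setminus\overline{\mathbb D}}$ is Hölder, and that functions in $\mathcal S(U)$ decay rapidly at $\infty$ because $U$ is unbounded.

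The main obstacle, and the point where this differs from the bounded quasidisc case of Theorem \ref{conformalbiholderbetweenbounded}, is exactly the second end: the target $\bC\setminus\overline{\mathbb D}$ is unbounded, so the global derivative bound of Lemma \ref{lemma-on-bounded-derivatives} fails, and one cannot invert by a Möbius transformation to reduce to a bounded domain without creating a puncture. The resolution is to exploit that the Riemann map, once normalized to fix $\infty$, extends conformally across $\infty$ and is therefore affine up to lower-order terms there, so that the behavior of $s\circ\psi$ near $\infty$ is controlled directly by the rapid decay of $s$; meanwhile Theorem \ref{thm-qsextensionmultiplyconnected} and Mori's theorem supply the bi-Hölder control near the quasicircle, just as in the bounded case. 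Checking that all derivatives of the zero-extension are continuous across $\partial U$, so that it genuinely lies in $\mathcal S(\bR^2)$, is routine given the flatness estimates above.
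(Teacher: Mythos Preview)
Your proposal is correct and follows essentially the same route as the paper: build a conformal map $U\to\bC\setminus\overline{\mathbb D}$ fixing $\infty$, extend it to a quasisymmetry of $\bC$, then treat the quasicircle boundary via the local Cauchy estimate on a bounded collar together with the bi-H\"older bound (Lemma~\ref{lemma-on-fast-decaying}), and treat $\infty$ via the Laurent expansion $\psi(z)=az+b+O(|z|^{-1})$. The only cosmetic difference is in how the map is produced and extended: the paper constructs it explicitly as $\phi(z)=1/\psi(1/z)$ with $\psi\colon U'\to\mathbb D$ the Riemann map of the bounded quasidisc $U'=(1/z)(U\cup\{\infty\})$, and then invokes the simply-connected extension Theorem~\ref{thm-quasisymmetryextension}; you instead map directly on $\widehat\bC$ and cite the multiply-connected extension Theorem~\ref{thm-qsextensionmultiplyconnected}. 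Either packaging yields the same quasisymmetric extension and the same estimates.
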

\begin{proof}
By translating we may assume that $0 \notin \overline{U}$ (see Remark \ref{remark-quasidisc-cannot-be-dense}).
Let $U'$ be the image of $(U\cup\{\infty\})$ by the map $1/z$.  The set $U'$ is bounded and simply-connected.  By the Riemann mapping theorem there exists a conformal map $\psi \colon U' \to \mathbb D$, and we can assume $\psi(0)=0$.
Define the map $\phi \colon U \to \bC \setminus \mathbb D$ as
\begin{align*}
    \phi(z):= 1/\psi(1/z).
\end{align*}
The map $\phi$ is defined so that the following diagram commutes:
\[\begin{tikzcd}
U \cup \{\infty\}\arrow{r}{\phi} \arrow[swap]{d}{1/z} & \widehat{\bC} \setminus \overline{\mathbb D} \arrow{d}{1/z} \\
U' \arrow{r}{\psi} & \mathbb D
\end{tikzcd}
\]
Note that $\phi$ extends to $\infty$ so that $\phi(\infty) = \infty$.  So $\phi$ is a well-defined map from $U$ to $\mathbb C \setminus \overline{\mathbb D}$.
From the definition, we see that $\phi$ is a conformal map.
By Theorem \ref{thm-quasisymmetryextension}, $\psi$ extends to a quasisymmetry from $\bC$ to $\bC$.  Immediate from its definition, $\phi$ can also be extended to a quasisymmetry from $\bC$ to $\bC$.

If we can verify that Equations \eqref{eq-derivbound} and \eqref{eq-derivboundinv} hold (with $V=\mathbb{D}$) for $\phi$ near $\partial U$, then the same proof as in Theorem \ref{conformalbiholderbetweenbounded} shows that for any $s \in \mathcal{S}(U)$, all the partial derivatives of $s \circ \phi^{-1}$ tend to zero  as $z$ approaches any boundary point of $\bC\setminus\overline{\mathbb D}$. Similarly, for any $\tilde{s} \in \mathcal{S}(\bC \setminus
\overline{\mathbb D})$, all the partial derivatives of $s
\circ \phi$ tend to zero as $z$ approaches any boundary
point of $U$.  
So it suffices to verify Equations \eqref{eq-derivbound} and \eqref{eq-derivboundinv} and in addition to show that $s\circ \phi$ and $\tilde{s}\circ \phi^{-1}$ have Schwartz-type decay at $\infty$.

We first show Equation \eqref{eq-derivbound} holds for $z$ close to $\partial U$.  Since $\partial U$ is bounded, its image by $\phi$ is also bounded.  
We choose a radius $R > 0$ so that $\partial U \subset \phi^{-1}(B(0,R))$.
If $z \in U$ is sufficiently close to $\partial U$, then $z$ will be closer to $\partial U$ than to $\phi^{-1}(\partial B(0,R))$.  
$\phi$ is a conformal map from $U \cap \phi^{-1}(B(0,R)) \to B(0,R) \setminus \overline{\mathbb{D}}$. For such a $z$, by Lemma \ref{lemma-on-bounded-derivatives} we have
\begin{align*}
    \abs{\phi^{(n)}(z)} \le \frac{C_n}{\operatorname{dist}(z,\partial U)^n},
\end{align*}
where $C_n$ depends on $\phi$ and $U$, but not $z$.  As long as $z$ is sufficiently close to $\partial U$, Equation \eqref{eq-derivbound} holds.
The proof of Equation \eqref{eq-derivboundinv} is the same but applied to $\phi^{-1}$ as opposed to $\phi$.

Let $s \in \mathcal{S}(\bC \setminus \overline{\mathbb D})$.  We next show that $s\circ \phi$ has Schwartz-type decay at $\infty$. 
The partial derivatives of $s\circ \phi$ can be expressed as linear combinations of
\begin{align*}
    (s^{(l)}\circ \phi ) \cdot \phi^{(l_1)}_{m_1}\cdots \phi^{(l_t)}_{m_t},
\end{align*}
where $l,l_1,\dots,l_t \in (\mathbb N_0)^2$ and $m_1,\dots,m_t \in \{1,2\}$.
Since $\phi$ is conformal on $U$, we have that for large $z$,
\begin{align*}
    \phi(z) = a_0 + a_1z + O(|z|^{-1}). 
\end{align*}
Differentiating term by term we see that all the partial derivatives of $\phi$ are bounded for $z \in \bC$, where $|z|$ is large. So, for any $k \in \mathbb N_0$, there exists a constant $C > 0$ so that for any $z \in \bC$,
\begin{align*}
    \sup\limits_{z\in U}\{|z^k  (s^{(l)}\circ \phi(z))\cdot  \phi^{(l_1)}_{m_1}(z)\cdots \phi^{(l_t)}_{m_t}(z)|\} &\le C\sup\limits_{z\in U}\{|z|^k  |s^{(l)}\circ \phi(z)|\} \\
    &=C\sup_{ y\in\bC \setminus \overline{\mathbb D}}\{ |\phi^{-1}(y)|^k  |s^{(l)}(y)|\},
\end{align*}
where $y:=\phi(z)$. We now use the Taylor expansion of $\phi^{-1}$ near $\infty$ to see that $|\phi^{-1}(y)|$ is bounded, up to a constant, by $|y|$.  So,
there exists a constant $C'>0$ such that \begin{align*}
    \sup\limits_{y\in\bC \setminus \overline{\mathbb D}}\{|\phi^{-1}(y)|^k  |s^{(l)}(y)|\} \leq C' \sup\limits_{y\in\bC \setminus \overline{\mathbb D}}\{|y|^k  |s^{(l)}(y)|\}<\infty,
\end{align*}
where the last inequality holds since $s \in \mathcal{S}(\bC \setminus \overline{\mathbb D})$.

The proof for $s\circ \phi^{-1}$, when $s \in \mathcal{S}(U)$, is exactly the same.
\end{proof}

\section{Quasiarc Domains}\label{chapter-complements-to-a-quasiarc}

We can prove a similar statement to Theorem \ref{theorem-quasidiscs-are-schwartz-equivalent} for domains that consist of the plane minus a quasiarc.  
We first need some preliminary statements.  

\begin{lemma}\label{lemma-squareroot}
If $L \subset \bC$ is a quasiarc whose endpoints are $0$ and $\infty$,
then the preimage of $L$ by the map $z\mapsto z^2$ is a quasicircle.
\end{lemma}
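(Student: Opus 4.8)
The plan is to observe that $s(z):=z^2$ and its conjugate by a quasisymmetry $h$ that straightens $L$ to a ray are the two degree-$2$ branched covers of $\widehat{\bC}$ with branch set $\{0,\infty\}$, hence are conjugate by a single quasiconformal map; the preimage $s^{-1}(L)$ will then be the image of a straight line under that conjugating map, so it is a quasicircle by Proposition \ref{prop-quasidisc-is-preserved-under-conformal}. To set up the geometry: as a quasiarc, $L$ lies on a quasicircle $J$, and its closure $\overline L$ in $\widehat{\bC}$ is one of the two closed arcs into which $0$ and $\infty$ divide $\overline J$; in particular $J$ is unbounded, so by Proposition \ref{prop-quasidisc-is-preserved-under-conformal} it is the image of $\bR$ under a quasisymmetry of $\bC$. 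Inverting that quasisymmetry, extending it to $\widehat{\bC}$, and post-composing with a real M\"obius transformation (and, if necessary, with $z\mapsto -z$), I obtain a quasiconformal $h\colon\widehat{\bC}\to\widehat{\bC}$ with $h(\overline J)=\bR\cup\{\infty\}$, $h(0)=0$, $h(\infty)=\infty$ and $h(\overline L)=[0,\infty]$; then $h|_{\bC}$ is a quasisymmetry of $\bC$, $h(L)=[0,\infty)$, and since $h$ is a bijection, $z^2\in L\iff h(z^2)\in[0,\infty)$.

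Now for the lifting step. Let $\mu:=(\mu_h\circ s)\cdot\overline{s'}/s'$ be the pullback by $s$ of the Beltrami coefficient $\mu_h$ of $h$, so $\mu(z)=\mu_h(z^2)\,\overline z/z$ and $\|\mu\|_\infty=\|\mu_h\|_\infty<1$. By Theorem \ref{thm-measurableriemann} there is a quasisymmetry $F\colon\bC\to\bC$ solving \eqref{eq-beltrami} for $\mu$; normalizing by a translation I may assume $F(0)=0$, and $F$ extends to $\widehat{\bC}$ with $F(\infty)=\infty$. Since $s$ is holomorphic, $h\circ s$ also satisfies \eqref{eq-beltrami} with coefficient $\mu$, so $R:=h\circ s\circ F^{-1}$ satisfies the Cauchy--Riemann equations weakly and, by Weyl's lemma, is holomorphic---that is, a rational map of degree $2$. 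Its critical points are the $F$-images $0$ and $\infty$ of the critical points $0,\infty$ of $s$, with critical values $R(0)=h(s(0))=0$ and $R(\infty)=h(s(\infty))=\infty$; the only degree-$2$ rational map with this ramification data is $R(z)=cz^2$ for some $c\in\bC\setminus\{0\}$. Hence $h(z^2)=c\,F(z)^2$ for all $z$.

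Finally, writing $c=|c|e^{2i\beta}$ gives $h(z^2)\in[0,\infty)\iff\bigl(e^{i\beta}F(z)\bigr)^2\in[0,\infty)\iff F(z)\in e^{-i\beta}\bR$, so $s^{-1}(L)=F^{-1}\bigl(e^{-i\beta}\bR\bigr)$. Since $e^{-i\beta}\bR=g(\bR)$ with $g$ the rotation $z\mapsto e^{-i\beta}z$, the map $F^{-1}\circ g$ is a composition of quasisymmetries of $\bC$, hence a quasisymmetry of $\bC$, and $(F^{-1}\circ g)(\bR)=s^{-1}(L)$; Proposition \ref{prop-quasidisc-is-preserved-under-conformal} then shows that $s^{-1}(L)$ is a quasicircle.

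I expect the lifting step to be the main obstacle: producing $F$ and verifying that $R=h\circ s\circ F^{-1}$ is a conformal map forced to equal $cz^2$. Two further points need a little care. The branch points $0,\infty$ cause no trouble here because $F$, and hence $R$, is already defined and conformal on all of $\widehat{\bC}$, so no separate removability argument is needed. And the hypothesis that the endpoints of $L$ are exactly the critical points $0,\infty$ of $s$ is essential: for a quasiarc not passing through $0$ and $\infty$ the preimage degenerates---for instance $s^{-1}(\bR)$ is a union of two lines, not a Jordan curve.
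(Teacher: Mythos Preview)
Your proof is correct and follows essentially the same strategy as the paper's: both straighten $L$ to $[0,\infty)$ by a quasisymmetry ($h$ in your notation, $\psi$ in the paper's), pull back its Beltrami coefficient through $z\mapsto z^2$, solve via Theorem~\ref{thm-measurableriemann} to obtain a quasisymmetry $F$ (the paper's $\phi$), and verify that $h\circ s\circ F^{-1}$ is holomorphic of degree two and hence equal to $z^2$ up to a multiplicative constant, forcing $s^{-1}(L)$ to be the quasisymmetric image of a line. The only cosmetic differences are that the paper normalizes $\phi$ so the constant $c$ disappears (rather than absorbing it into a rotation at the end) and identifies the composition as a quadratic \emph{polynomial} on $\bC$ rather than a rational map on $\widehat{\bC}$; your construction of $h$ from the ambient quasicircle $J$ is also spelled out more explicitly than in the paper.
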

\begin{proof}
Let $\gamma$ be the preimage of $L$ by $z^2$.
Note that there exists a  quasisymmetric map $\psi\colon\bC\to\bC$ that sends $\mathbb C \setminus L$ to $\mathbb C \setminus [0,\infty)$.  Suppose there exists a quasisymmetry $\phi \colon \mathbb C \to \mathbb C$ such that $\phi(z)^2 = \psi(z^2)$.  That is, the following diagram commutes:
\[\begin{tikzcd}
\bC \arrow{r}{\phi} \arrow[swap]{d}{z^2} & \bC \arrow{d}{z^2} \\
\bC \arrow{r}{\psi} & \bC
\end{tikzcd}
\]
Then $\gamma = \phi^{-1}(\mathbb R)$ and $\gamma$ is a quasicircle.
Therefore, it suffices to find such a $\phi$.

We do this by employing the measurable Riemann mapping theorem (Theorem \ref{thm-measurableriemann}). For any quasisymmetric map $\rho\colon\bC\to\bC$ we define its Beltrami coefficient $\mu_{\rho}$ by 
\begin{align*}
    \mu_{\rho}(z) := \frac{\partial \rho(z)}{\partial \bar z}\bigg / \frac{\partial \rho(z)}{\partial z}.
\end{align*}

Now, define
\begin{align*}
    \mu(z) := \frac{\partial \psi(z^2)}{\partial \bar z}\bigg / \frac{\partial \psi(z^{2})}{\partial  z} = \mu_\psi(z) \frac{\bar z}{z}.
\end{align*}

Since $\psi$ is a quasisymmetry on $\bC$ and $\abs{\frac{\bar z}{z}}=1$, we have that $\|\mu\|_\infty = \|\mu_{\psi}\|_\infty < 1$.
Theorem \ref{thm-measurableriemann} now implies that there exists a quasisymmetry $\phi\colon\bC\to\bC$ satisfying the Beltrami equation for $\mu$, i.e., such that \eqref{eq-beltrami} holds with $\mu=\mu_\phi$. This $\phi$ is unique up to a M\"obius transformation, and so we fix one such $\phi$ for now.

A straight forward computation shows that the Beltrami coefficient for $\psi \circ p \circ \phi^{-1}$, where $p(z) = z^2$, is
\begin{align*}
    \mu_{\psi \circ p \circ \phi^{-1}} = \frac{\partial \phi/ \partial z}{\overline{\partial \phi/ \partial z}}\frac{\mu_{\psi \circ p} - \mu_\phi}{1- \overline{\mu_\phi}\mu_{\psi \circ p}} = 0,
\end{align*}
where we used the fact that by the definition of $\mu_\phi$, 
\begin{align*}
    \mu_{\psi \circ p} = \mu_\phi.
\end{align*}
So $\psi \circ p \circ \phi^{-1}$ is a holomorphic degree two map from $\mathbb C$ to $\mathbb C$ and therefore is a quadratic polynomial.
If we normalize $\phi$ by a M\"obius transformation so that $\phi(0) = 0$, then $\psi \circ p \circ \phi^{-1}(z) = 0$ only when $z = 0$.
So $\psi \circ p \circ \phi^{-1}(z) = az^2$ and we can change $\phi$ again by a M\"obius transformation so that $\psi \circ p \circ \phi^{-1}(z) = z^2 = p(z)$.  This proves the lemma.
\end{proof}

Let $L \subset \bC$ be a quasiarc whose endpoints are $0$ and $\infty$. 
 The set $U = \bC \setminus L$ is simply-connected and hence there exists
a holomorphic square root $\sqrt{\cdot} \colon U \to \bC$. We now will show that a holomorphic square root preserves Schwartz spaces.
\begin{lemma}\label{lemma-squareroot-preserves-schwartz}
Let $U,U'$ be simply-connected domains in $\bC$ that do not contain $0$. If $g \colon U \to U'$ is a holomorphic square root that maps $U$ onto $U'$, then $\mathcal{S}(U)\cong \mathcal{S}(U')$. 
\end{lemma}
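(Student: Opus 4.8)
The plan is to reduce this to the already-established machinery for conformal maps: Lemma \ref{lemma-enough-to-pull-functions-to-get-F-iso} tells us it suffices to check that $s \in \mathcal{S}(U')$ implies $s \circ g \in \mathcal{S}(U)$ and that $\tilde s \in \mathcal{S}(U)$ implies $\tilde s \circ g^{-1} \in \mathcal{S}(U')$, where $g^{-1}(w) = w^2$ is the restriction of the squaring map. Since $g$ is conformal (a holomorphic square root is locally biholomorphic away from $0$, and injective by hypothesis), the local estimates on derivatives are available; the real content is controlling the behavior of $g$ and $g^{-1}$ near the boundary and near $\infty$.

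First I would treat the ``easy" direction, $\tilde s \in \mathcal{S}(U) \mapsto \tilde s \circ g^{-1} \in \mathcal{S}(U')$. Here $g^{-1}(w) = w^2$ is a polynomial, so all of its partial derivatives are polynomially bounded, and the map is $2$-H\"older on bounded sets (indeed Lipschitz on bounded sets) while it sends $\infty$ to $\infty$ with at-worst-polynomial growth. Using the chain/Leibniz rule exactly as in the proof of Theorem \ref{conformalbiholderbetweenbounded} and Lemma \ref{lemma-mobiuspreservesschwartz}, the partial derivatives of $\tilde s \circ g^{-1}$ are finite linear combinations of terms $(\tilde s^{(l)} \circ g^{-1}) \cdot (g^{-1})^{(l_1)}_{m_1} \cdots (g^{-1})^{(l_t)}_{m_t}$; since the derivative factors are polynomially bounded and $w \mapsto w^2$ maps boundary points of $U'$ to boundary points of $U$ (as $g$ is a bijection $U \to U'$, so $w^2 \in U \iff w \in U'$, hence $w \in \partial U' \Rightarrow w^2 \in \partial U$), the flatness of $\tilde s$ at $\partial U$ together with Proposition \ref{prop-characterization-of-schwartz-functions} forces each term to decay; the polynomial growth of $w^2$ at $\infty$ is absorbed by the rapid decay of $\tilde s^{(l)}$ at $\infty$. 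A small point to check: $\operatorname{dist}(w^2, \bC \setminus U)$ is comparable to $\operatorname{dist}(w, \bC \setminus U')$ up to a factor polynomial in $|w|$, which follows from Lemma \ref{lemma-distortion-of-distance} applied to $w \mapsto w^2$ (locally H\"older) and a separate estimate near $\infty$.

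The other direction, $s \in \mathcal{S}(U') \mapsto s \circ g \in \mathcal{S}(U)$, is the main obstacle, because $g$ is a branch of the square root and near $0$ (which lies in $\bC \setminus U$ and $\bC \setminus U'$) its derivatives blow up. The strategy is to bound the derivatives of $g$ by a power of $\operatorname{dist}(z, \partial U)^{-1}$ as in Lemma \ref{lemma-on-bounded-derivatives} — but $g$ need not have bounded image, so I would split $U$ into the part near $\partial U$ and the part near $\infty$. Near a finite boundary point $z_0 \in \partial U$: if $z_0 \neq 0$, then $g$ is conformal on a neighborhood with bounded image, and Lemma \ref{lemma-on-bounded-derivatives} gives $|g^{(n)}(z)| \le C_n \operatorname{dist}(z,\partial U)^{-n}$ directly; combined with Lemma \ref{lemma-on-fast-decaying} (using that $g$ is H\"older there) and the decay of $s$, each product term is controlled. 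If $z_0 = 0 \in \partial U$, I would use the explicit form: on $U$ near $0$ we have $|g(z)| = |z|^{1/2}$ and $|g^{(n)}(z)| \lesssim |z|^{1/2 - n} \lesssim |g(z)| \cdot |g(z)|^{-2n}$, so the derivative factors are bounded by powers of $|g(z)|$, while $\operatorname{dist}(g(z), \partial U') \lesssim |g(z)|$; since $0 \in \partial U'$ and $s^{(l)} \in \mathcal{S}(U')$, Proposition \ref{prop-characterization-of-schwartz-functions} gives decay of $s^{(l)}(g(z))$ faster than any power of $\operatorname{dist}(g(z),\partial U')$, hence faster than any power of $|g(z)|$, which dominates the blow-up. (One should also note $g(z) \to 0$ as $z \to 0$, so this is genuinely approaching the boundary of $U'$.) Finally, Schwartz-type decay at $\infty$ (when $U$ is unbounded): for $|z|$ large, $|g(z)| \sim |z|^{1/2}$ and $|g^{(n)}(z)| \lesssim |z|^{1/2-n}$ is bounded, so $|z^{k} s^{(l)}(g(z)) g^{(l_1)}_{m_1}(z) \cdots| \lesssim |z|^{k} |s^{(l)}(g(z))| \lesssim |g(z)|^{2k} |s^{(l)}(g(z))|$, which tends to $0$ because $g(z) \to \infty$ and $s^{(l)}$ has rapid decay at $\infty$ on $U'$ — unless $\infty \notin \partial U'$, i.e. $U'$ is bounded, in which case $g$ itself is bounded, $U$ is bounded away from $0$ and $\infty$ does not belong to $\overline{U}$ only if... — more carefully: $U'$ bounded forces $|g(z)| = |z|^{1/2}$ bounded hence $U$ bounded, and then there is no condition at $\infty$ to check and the derivative bounds near $\partial U$ already suffice. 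Assembling these cases and invoking Lemma \ref{lemma-enough-to-pull-functions-to-get-F-iso} completes the proof.
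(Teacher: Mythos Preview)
Your proposal is correct and follows essentially the same three-case decomposition (finite boundary points $z_0 \neq 0$, the origin, and $\infty$) and the same chain-rule bookkeeping as the paper's proof. One minor simplification the paper makes: at $z_0 \in \partial U \setminus \{0\}$ it does not invoke Lemma~\ref{lemma-on-bounded-derivatives} or Lemma~\ref{lemma-on-fast-decaying}, but simply observes that the square root extends analytically to a neighborhood of $z_0$, so $g$ and all its derivatives are already bounded there by continuity --- no distance-to-boundary estimate is needed.
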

\begin{proof}
By Lemma \ref{lemma-enough-to-pull-functions-to-get-F-iso}, it suffices to show that for $s \in \mathcal{S}(U')$, the function $s\circ g$ is in $\mathcal{S}(U)$ and for $s \in \mathcal{S}(U)$ the function $s \circ g^{-1}$ is in $\mathcal{S}(U')$.
 
Suppose that $s \in \mathcal{S}(U')$. 
In order to show that $s\circ
g\in\mathcal{S}(U)$, it suffices to show that $s\circ g$, and all of its partial derivatives have Schwartz-like decay at $\infty$ (in the case when $U$ is unbounded), and moreover that they all go to zero when approaching any boundary point of $U$.

Note that any partial derivative of $s\circ
g$ can be written as a linear combination with constant coefficients of terms
of the form
 \begin{align}\label{sequencetoshow}
     (s^{(l)}\circ g)  g^{(l_1)}_{m_1}\cdots g^{(l_t)}_{m_t},
 \end{align}
 where $l,l_1,\dots,l_t \in (\mathbb N_0)^2$ and $m_1,\dots,m_t \in \{1,2\}$. Thus, it is enough to show that \eqref{sequencetoshow} has a Schwartz-like decay at $\infty$ (in the case when $U$ is unbounded), and
moreover that \eqref{sequencetoshow} goes to zero when approaching any boundary point of $U$. 

Around any $z_0\in\partial U$ that is not the origin, the function $g$ extends analytically to a neighborhood of $z_0$, and so $g$ and all of its partial derivatives are bounded in a neighborhood of $z_0$. Since $s^{(l)}\in\mathcal{S}(U')$, $s^{(l)}\circ g$ goes to zero as $z$ approaches $z_0$. These two facts together imply that \eqref{sequencetoshow} goes to zero as $z$ approaches $z_0$. 

To see the Schwartz-like decay at $\infty$ (if $U$ is unbounded), note that the partial derivatives of $g$ are uniformly bounded away from the origin.
So if $k_1,k_2 \in \mathbb N_0$, then there exists a constant $C >0$ so that for any $(x,y) \in \bR^2$ and $z$ with $\abs{z}>1$,
 \begin{align*}
     |x^{k_1}y^{k_2} s^{(l)}(g(x,y)) \cdot  g^{(l_1)}_{m_1}(x,y)\cdots g^{(l_t)}_{m_t}(x,y)| \le C|x^{k_1}y^{k_2} s^{(l)}( g(x,y))|.
 \end{align*}
 If $u + iv = g(x,y) = \sqrt{x+iy}$, then
 \begin{align*}
     |x^{k_1}y^{k_2} s^{(l)}(g(x,y))| = |(u^2 - v^2)^{k_1}(2uv)^{k_2} s^{(l)}(u,v)|.
 \end{align*}
 This goes to $0$ as $(x,y)$ goes to $\infty$, since $(u,v)$ also goes to $\infty$ and $s^{(l)}\in\mathcal{S}(U')$.
 
 We are left to check how \eqref{sequencetoshow} behaves as $z$ approaches the origin (in case the origin lies in $\partial U$). Around the origin, \eqref{sequencetoshow} can be bounded by
 \begin{align*}
     |s^{(l)}(g(z)) \cdot g^{(l_1)}_{m_1}(z)\cdots g^{(l_t)}_{m_t}(z)| \leq C|s^{(l)}(g(z))| |z|^{-m}.
 \end{align*}
 where $z = x + iy$, $m \in \mathbb N_0$ is bounded in terms of the indices $\{l_1,\dots,l_t\}$ and $C>0$ is some constant.
 Changing variables as before, let $w = g(z)$ and
 \begin{align*}
     |s^{(l)}(g(z))| |z|^{-m} = |s^{(l)}(w)||w|^{-2m},
 \end{align*}
 This goes to $0$ as $z$ goes to the origin, since $w$ also goes to the origin, which is a boundary point of $U'$, and $s^{(l)} \in \mathcal{S}(U')$.  We have thus shown that $s\circ g \in \mathcal{S}(U)$.
 
Suppose that $s \in \mathcal{S}(U)$. In order to show that $s\circ
g^{-1}\in\mathcal{S}(U')$, it suffices to show that $s\circ g^{-1}$, and all of its
partial derivatives have Schwartz-like decay at $\infty$ (in the case when
$U'$ is unbounded), and moreover that they all go to zero when approaching
any boundary point of $U'$.

Note that any partial derivative of $s\circ
g^{-1}$ can be written as a linear combination with constant coefficients of terms
of the form
 \begin{align}\label{secondsequencetoshow}
     (s^{(l)}\circ g^{-1}) \cdot (g^{-1})^{(l_1)}_{m_1}\cdots (g^{-1})^{(l_t)}_{m_t},
 \end{align}
 where $l,l_1,\dots,l_t \in (\mathbb N_0)^2$ and $m_1,\dots,m_t \in \{1,2\}$.
Thus, it is enough to show that \eqref{secondsequencetoshow} has a Schwartz-like
decay at $\infty$ (in the case when $U'$ is unbounded), and
moreover that \eqref{secondsequencetoshow} goes to zero when approaching any boundary
point of $U'$. 

As the function $g^{-1}(z)=z^2$ is analytic, around any $z_0\in\partial U'$, including  the origin (in case the origin lies in $\partial U'$), it and its partial
derivatives are bounded. Since $s^{(l)}\in\mathcal{S}(U)$,
$s^{(l)}\circ g^{-1}$ goes to zero as $z$ approaches $z_0$. These two facts together
imply that \eqref{secondsequencetoshow} goes to zero as $z$ approaches $z_0$.

To see the Schwartz-like decay at $\infty$ (if $U'$ is unbounded), note that the partial derivatives
of $g^{-1}$ are bounded linearly, and so if $k_1,k_2 \in \mathbb N_0$, then there exists a constant $C_1 > 0$ so that for any $(x,y) \in \bR^2$,
 \begin{align*}
     |x^{k_1}y^{k_2} s^{(l)}( g^{-1}(x,y) )\cdot (g^{-1})^{(l_1)}_{m_1}(x,y)\cdots (g^{-1})^{(l_t)}_{m_t}(x,y)| \le C_1|x^{k_1'}y^{k_2'} s^{(l)}( g^{-1}(x,y))|,
 \end{align*}
 where $k_1'$ and $k_2'$ are bounded in terms of $\{k_1,k_2,l_1,\dots,l_t\}$. Note that there exists a constant $C_2$ so that
 \begin{align*}
     |x^{k_1'}y^{k_2'} s^{(l)}( g^{-1}(x,y))| \le C_2|z|^{k_1'+k_2'}|s^l( g^{-1}(z))|,
\end{align*}
where $z = x+iy$.  So if $w = g^{-1}(z)$, then
\begin{align*}
     |z|^{k_1'+k_2'}|s^{(l)}( g^{-1}(z))| = |w|^{\frac{k_1'+k_2'}{2}}|s^{(l)}(w)|,
\end{align*}
which goes to $0$ as $z$ goes to $\infty$, as then $w$ also goes to $\infty$ and $s^{(l)}\in\mathcal{S}(U)$. We have thus shown that $s\circ g^{-1} \in \mathcal{S}(U')$.
\end{proof}

\begin{theorem}\label{thm-infinite-slit-domains}
If $U \subset \bC$ is a simply-connected domain whose boundary is a quasiarc, then $\mathcal{S}(U) \cong \mathcal{S}(\mathbb D)$.
\end{theorem}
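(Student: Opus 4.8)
The plan is to put $U$ into a topological normal form, pass to a holomorphic square root so as to land inside a \emph{quasidisc}, and then quote the results already established (Theorem~\ref{theorem-quasidiscs-are-schwartz-equivalent} together with Lemmas~\ref{lemma-mobiuspreservesschwartz}, \ref{lemma-squareroot} and \ref{lemma-squareroot-preserves-schwartz}).

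First I would normalize the domain. Write $L:=\partial U$, which is a quasiarc, so that its closure $\bar L$ in $\widehat{\bC}$ is an arc. Since $U$ is simply connected, $\widehat{\bC}\setminus U$ is connected; using that $L$ has empty interior (it lies in a Jordan curve) one checks that this forces $L$ to be unbounded, that $\partial_{\widehat{\bC}}U=\bar L=L\cup\{\infty\}$, and that $U=\bC\setminus L$. Next, choose a M\"obius transformation $M$ of $\widehat{\bC}$ carrying the two endpoints of the arc $\bar L$ to $0$ and $\infty$. Then $M(U)$ is again a simply-connected domain whose boundary is a quasiarc (M\"obius maps send quasicircles to quasicircles and preserve the arc structure), and by construction the closure of $\partial\bigl(M(U)\bigr)$ has endpoints $0$ and $\infty$. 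By Lemma~\ref{lemma-mobiuspreservesschwartz} we have $\mathcal{S}(U)\cong\mathcal{S}(M(U))$, so I may replace $U$ by $M(U)$ and assume henceforth that $L=\partial U$ is a quasiarc with endpoints $0$ and $\infty$, that $U=\bC\setminus L$, and (since $0\in L$) that $0\notin U$.

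Now I would take a square root. By Lemma~\ref{lemma-squareroot}, $\gamma:=\{z\in\bC:z^{2}\in L\}$ is a quasicircle, and $0\in\gamma$. The squaring map $q(z)=z^{2}$ restricts to a two-fold covering $q\colon\bC\setminus\gamma\to\bC\setminus L=U$: it is the restriction over $q^{-1}(U)$ of the standard two-fold covering $\bC\setminus\{0\}\to\bC\setminus\{0\}$, which is legitimate because $0\in L\cap\gamma$. As $U$ is simply connected, this covering is trivial, so $\bC\setminus\gamma=V_{1}\sqcup V_{2}$ with each $q|_{V_{i}}\colon V_{i}\to U$ a biholomorphism. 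On the other hand, since $U$ is simply connected and $0\notin U$, there is a holomorphic square root $g\colon U\to\bC$, and $g$ is a continuous section of $q$; hence $g$ maps the connected set $U$ into a single sheet, say $V_{1}$, and $g=(q|_{V_{1}})^{-1}$. Thus $g\colon U\to V_{1}$ is a holomorphic square root mapping $U$ onto $V_{1}$, where $V_{1}$ is a complementary component of the quasicircle $\gamma$ not containing $0$ — in particular $V_{1}$ is a quasidisc. Lemma~\ref{lemma-squareroot-preserves-schwartz} then gives $\mathcal{S}(U)\cong\mathcal{S}(V_{1})$, while Theorem~\ref{theorem-quasidiscs-are-schwartz-equivalent} gives $\mathcal{S}(V_{1})\cong\mathcal{S}(\mathbb D)$; combined with the reduction of the previous paragraph this yields $\mathcal{S}(U)\cong\mathcal{S}(\mathbb D)$.

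The main obstacle, I expect, is not analytic but point-set-topological: carefully justifying that $U$ must be all of $\bC\setminus L$ (and that $L$ must be unbounded), that the squaring map is honestly a covering over the simply-connected base $U$, and that the image $g(U)$ is a \emph{full} complementary component of $\gamma$ — so that it is genuinely a quasidisc rather than some proper subdomain of one. Once these geometric points are settled, the entire Schwartz-space content is carried by Lemmas~\ref{lemma-mobiuspreservesschwartz} and \ref{lemma-squareroot-preserves-schwartz} and Theorem~\ref{theorem-quasidiscs-are-schwartz-equivalent}, and nothing further is needed.
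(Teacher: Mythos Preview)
Your proposal is correct and follows essentially the same route as the paper: normalize the endpoints of the quasiarc to $0$ and $\infty$ via a M\"obius map (Lemma~\ref{lemma-mobiuspreservesschwartz}), pass to a holomorphic square root whose image, by Lemma~\ref{lemma-squareroot}, is a quasidisc, and then invoke Lemma~\ref{lemma-squareroot-preserves-schwartz} together with Theorem~\ref{theorem-quasidiscs-are-schwartz-equivalent}. If anything, your version is more scrupulous than the paper's about the point-set topology---that $U=\bC\setminus L$, that $L$ is unbounded, and that $g(U)$ is a full complementary component of $\gamma$---points the paper's proof leaves implicit.
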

\begin{proof}
There exists a M\"obius transformation that sends the end points of $\partial U$ to $0$ and $\infty$.
By Lemma \ref{lemma-mobiuspreservesschwartz}, this map will preserve the Schwartz space of our set and so without loss of generality we assume that the boundary of $U$ has endpoints at $0$ and $\infty$.
Let $U'$ be the image of $U$ by a holomorphic square root.  By Lemma \ref{lemma-squareroot}, the boundary of $U'$ is
a quasicircle.  In addition, $U$ and hence $U'$ are simply-connected. So by Theorem \ref{theorem-quasidiscs-are-schwartz-equivalent}, $\mathcal{S}(U') \cong \mathcal{S}(\mathbb D)$.  
Finally, by Lemma \ref{lemma-squareroot-preserves-schwartz}, $\mathcal{S}(U) \cong \mathcal{S}(U') \cong \mathcal{S}(\mathbb D)$.
\end{proof}

\begin{remark}As a consequence of Theorem \ref{thm-infinite-slit-domains}, any $C^1$-arc (i.e., a set $\gamma$ such that there exists a $C^1$-smooth immersion from $\bR$ into $\widehat{\bC}$ where its image of $[0,1]$ is $\gamma$) in $\widehat{\bC}$ that has an endpoint at $\infty$ will be the boundary of a domain that is Schwartz equivalent to $\mathbb D$ ($C^1$ arcs are quasiarcs).  Examples of these domains include the sets 
\[
\mathbb \bR^2 \setminus \{(x,y)\in\bR^2| x\geq0 \text{ and } y=f(x)\},
\]
with $f(x)=e^x$, $f(x)=\sin (x)$ and $f(x)=x^\pi$, but not with $f(x)=\sin(x^2)$ as the last set has a boundary that is not $C^1$ at $\infty$ (compare to Example \ref{examples}(6,7)).
Note that $e^x \colon [0,\infty) \to \bR $  is not definable in any polynomially bounded o-minimal structure and $\sin (x)\colon [0,\infty) \to \bR$ is not definable in any o-minimal structure.
\end{remark}

\begin{theorem}\label{thm-slit-domains}
If $U \subset \mathbb C$ is a domain whose boundary is a bounded quasiarc, then $\mathcal{S}(U) \cong \mathcal{S}(\mathbb C\setminus \overline{\mathbb D})$.
\end{theorem}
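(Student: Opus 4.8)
The plan is to peel off the ``point at infinity'' boundary component with a Möbius transformation, unfold the remaining slit with a holomorphic square root to land in a quasidisc with one interior point removed, and then finish via Theorem~\ref{thm-unbouded-domains-whose-boundaries-are-bounded-quasicircles}.

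Since $\partial U$ is a compact arc and arcs do not separate the plane, we have $U=\bC\setminus L$ with $L:=\partial U$. Let $a,b$ be the endpoints of $L$. First I would fix a Möbius transformation $M$ with $M(a)=0$ and $M(b)=\infty$; since $\infty\notin L$, the value $p:=M(\infty)$ is a finite point lying off the set $\ell:=M(L)\cap\bC$, and a short computation in $\widehat{\bC}$ shows
\[
M(U)=(\bC\setminus\ell)\setminus\{p\},
\]
where $\ell$ is a quasiarc whose closure in $\widehat{\bC}$ is the arc $M(L)$ with endpoints $0$ and $\infty$ (Möbius transformations send quasicircles to quasicircles, hence quasiarcs to quasiarcs). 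By Lemma~\ref{lemma-mobiuspreservesschwartz} it suffices to prove $\mathcal{S}(M(U))\cong\mathcal{S}(\bC\setminus\overline{\mathbb D})$.

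Next, $\bC\setminus\ell=\widehat{\bC}\setminus M(L)$ is simply connected (the complement of an arc in the sphere) and omits $0$, so it carries a holomorphic square root $g$. Since $z\mapsto z^{2}$ is an unramified double cover over $\bC\setminus\ell$ (its critical values $0$ and $\infty$ lie on the closure of $\ell$ in $\widehat{\bC}$) and $\bC\setminus\ell$ is simply connected, $g$ is a conformal isomorphism of $\bC\setminus\ell$ onto one full component $W$ of $\bC\setminus\gamma$, where $\gamma$ is the preimage of $\ell$ under $z\mapsto z^{2}$; by Lemma~\ref{lemma-squareroot} the set $\gamma$ is a quasicircle, so $W$ is a quasidisc and $0\notin W$. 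Lemma~\ref{lemma-squareroot-preserves-schwartz} then gives that $s\mapsto s\circ g$ is an isomorphism $\mathcal{S}(W)\cong\mathcal{S}(\bC\setminus\ell)$. Because $g$ is a biholomorphism it preserves flatness at a point, and because $\mathcal{S}(D\setminus\{w\})=\{f\in\mathcal{S}(D):f\text{ is flat at }w\}$ for any open $D$ and interior point $w$ (directly from Definition~\ref{def-schwartz-on-open}), this isomorphism restricts to an isomorphism of closed subspaces
\[
\mathcal{S}\bigl(W\setminus\{q_{0}\}\bigr)\cong\mathcal{S}\bigl((\bC\setminus\ell)\setminus\{p\}\bigr)=\mathcal{S}(M(U)),\qquad q_{0}:=g(p)\in W.
\]
Finally I would pick a Möbius transformation $N$ with $N(q_{0})=\infty$. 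Then $N(W\setminus\{q_{0}\})=N(W)\setminus\{\infty\}$ is an unbounded domain, and its boundary in $\bC$ is $N(\partial W)$; since $\partial W$ is a quasicircle avoiding $q_{0}$, its $N$-image is a compact subset of $\bC$, hence a bounded quasicircle. Theorem~\ref{thm-unbouded-domains-whose-boundaries-are-bounded-quasicircles} gives $\mathcal{S}(N(W\setminus\{q_{0}\}))\cong\mathcal{S}(\bC\setminus\overline{\mathbb D})$, and a second application of Lemma~\ref{lemma-mobiuspreservesschwartz} closes the chain of isomorphisms.

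\textbf{Main obstacle.} The analytic heart of the argument---derivative bounds near the boundary and Schwartz-type decay at $\infty$---is entirely supplied by Lemmas~\ref{lemma-squareroot-preserves-schwartz} and \ref{lemma-mobiuspreservesschwartz} and by Theorem~\ref{thm-unbouded-domains-whose-boundaries-are-bounded-quasicircles}, and the only genuinely new ingredient is the (immediate) observation that a biholomorphism-induced Schwartz isomorphism respects point-flatness. I expect the real care to be needed in the topological bookkeeping between $\bC$ and $\widehat{\bC}$: checking that $M(U)$ is precisely a slit domain minus \emph{one} interior point, that the branch $g$ of the square root maps onto an \emph{entire} complementary component of $\gamma$ rather than a proper subdomain, and that applying $N$ genuinely converts the isolated boundary point $q_{0}$ into the point at infinity, leaving an unbounded domain whose boundary is a single bounded quasicircle.
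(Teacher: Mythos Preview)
Your argument is correct and follows essentially the same route as the paper: M\"obius to send the arc's endpoints to $0$ and $\infty$, unfold with a holomorphic square root to land in a quasidisc, transfer the deleted interior point via the flatness characterization (the paper packages this as a separate point-removal lemma applied after invoking Theorem~\ref{thm-infinite-slit-domains}), and finish with one more M\"obius. The only cosmetic difference is the endgame---the paper passes through $\mathbb D\setminus\{0\}$ and then applies $1/z$, whereas you stay at $W\setminus\{q_0\}$, send $q_0\mapsto\infty$, and invoke Theorem~\ref{thm-unbouded-domains-whose-boundaries-are-bounded-quasicircles}; these are equivalent reorderings of the same steps.
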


\begin{proof}
There exists a M\"obius transformation, $A$, mapping $U$ to a set $U'$ so that the end points of $\partial U$ are mapped to $0$ and $\infty$.
If $V = U' \cup A(\infty)$,
then $V$ satisfies the conditions in Theorem \ref{thm-infinite-slit-domains} and so $\mathcal{S}(V) \cong \mathcal{S}(\mathbb D)$.
By Lemma \ref{lemma-mobiuspreservesschwartz}, we have that $\mathcal{S}(U) \cong \mathcal{S}(U')$.
By applying the M\"obius transformation $z\mapsto\frac{1}{z}$ and Lemma \ref{lemma-mobiuspreservesschwartz}, we see that  $\mathcal{S}(\mathbb D \setminus \{0\})\cong\mathcal{S}(\mathbb
C\setminus \overline{\mathbb
D})$. So if $\mathcal{S}(U') \cong \mathcal{S}(\mathbb D \setminus \{0\})$, then the theorem is proved. This will be shown in the following lemma.
\end{proof}
\begin{lemma}
Let $U, V \subset \bC$ be domains so that there exists a diffeomorphism, $\phi$, mapping $U$ to $V$.  If $\phi$ induces an isomorphism of $\mathcal{S}(U)$ and $\mathcal{S}(V)$, then for any $p \in U$, $\mathcal{S}(U\setminus \{p\}) \cong \mathcal{S}(V\setminus \{\phi(p)\})$.
\end{lemma}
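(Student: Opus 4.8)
Since $\phi\colon U\to V$ is a diffeomorphism, its restriction $\phi|_{U\setminus\{p\}}\colon U\setminus\{p\}\to V\setminus\{\phi(p)\}$ is again a diffeomorphism of open sets, and I claim it is the map that realizes the Schwartz equivalence. By Lemma \ref{lemma-enough-to-pull-functions-to-get-F-iso} it suffices to verify the two containments $\phi^*(\mathcal{S}(V\setminus\{\phi(p)\}))\subseteq\mathcal{S}(U\setminus\{p\})$ and $(\phi^{-1})^*(\mathcal{S}(U\setminus\{p\}))\subseteq\mathcal{S}(V\setminus\{\phi(p)\})$; by the obvious symmetry of the situation (and $\phi^{-1}(\phi(p))=p$) only the first one needs to be treated.

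The key is to unwind Definition \ref{def-schwartz-on-open}: for any open $W\subseteq\bR^n$ and any $q\in W$ we have $\bR^n\setminus(W\setminus\{q\})=(\bR^n\setminus W)\cup\{q\}$, so that $\mathcal{S}(W\setminus\{q\})=\{f\in\mathcal{S}(W): f \text{ is flat at } q\}$. Thus, fixing $s\in\mathcal{S}(V\setminus\{\phi(p)\})$, we know that $s\in\mathcal{S}(V)$ and that $s$ is flat at $\phi(p)$. Because $\phi$ induces an isomorphism $\phi^*\colon\mathcal{S}(V)\to\mathcal{S}(U)$, we get $s\circ\phi\in\mathcal{S}(U)$ for free. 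It remains only to check that $s\circ\phi$ is flat at $p$.

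This last point is where flatness is propagated through the composition: for any multi-index $k$, the chain rule (Fa\`a di Bruno's formula) expresses $(s\circ\phi)^{(k)}(p)$ as a universal polynomial in the partial derivatives of $\phi$ at $p$ and the partial derivatives $s^{(l)}(\phi(p))$ with $|l|\le|k|$, in which every monomial contains at least one factor of the form $s^{(l)}(\phi(p))$ (when $|k|=0$ this factor is $s(\phi(p))$ itself). Since $s$ is flat at $\phi(p)$, all of these factors vanish, so $(s\circ\phi)^{(k)}(p)=0$ for every $k$, i.e. $s\circ\phi$ is flat at $p$. Hence $s\circ\phi\in\mathcal{S}(U\setminus\{p\})$. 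The same argument applied to $\phi^{-1}$ and a function $\tilde s\in\mathcal{S}(U\setminus\{p\})$ gives the reverse containment, and Lemma \ref{lemma-enough-to-pull-functions-to-get-F-iso} then yields $\mathcal{S}(U\setminus\{p\})\cong\mathcal{S}(V\setminus\{\phi(p)\})$.

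There is no serious obstacle here; the only point that deserves care is the bookkeeping between the two descriptions of $\mathcal{S}(\cdot)$ (as a closed subspace of $\mathcal{S}(\bR^n)$, and as honest functions on the open set). Concretely, when we regard $s\circ\phi$ as a function on $U\setminus\{p\}$ and extend it by zero to $\bR^n$, this extension coincides with $\operatorname{Ext}_U^{\bR^n}(s\circ\phi)$ precisely because $(s\circ\phi)(p)=s(\phi(p))=0$ by the flatness of $s$ at $\phi(p)$, so that no new discontinuity is created at $p$ and the membership $s\circ\phi\in\mathcal{S}(U\setminus\{p\})$ makes sense as stated.
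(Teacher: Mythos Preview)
Your proof is correct and follows the same overall architecture as the paper's: use the restriction of $\phi$ itself, observe that $\mathcal{S}(W\setminus\{q\})$ sits inside $\mathcal{S}(W)$ as the functions flat at $q$, apply the hypothesis to get $s\circ\phi\in\mathcal{S}(U)$, and then check the extra condition at the single point $p$.

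The only substantive difference is in that last verification. You argue directly from Definition~\ref{def-schwartz-on-open}: flatness of $s$ at $\phi(p)$ plus the chain rule forces every $(s\circ\phi)^{(k)}(p)$ to vanish, so $s\circ\phi$ is flat at $p$. The paper instead invokes Proposition~\ref{prop-characterization-of-schwartz-functions} and checks the decay condition $\sup_{z}|s\circ\phi(z)|/|z-p|^m<\infty$, using that the smooth diffeomorphism $\phi^{-1}$ is locally Lipschitz at $\phi(p)$ to transfer the corresponding decay of $s$ near $\phi(p)$. Your route is slightly more elementary---it needs only the chain rule and no analytic estimate---while the paper's argument keeps to the distance-to-boundary language used elsewhere in the paper. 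Both reach the same conclusion with no gaps.
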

\begin{proof}
Let $s \in \mathcal{S}(V\setminus\{\phi(p)\}) \subset \mathcal{S}(V)$.
Then $s \circ \phi \in \mathcal{S}(U)$ and it suffices to show, by Proposition \ref{prop-characterization-of-schwartz-functions} that for any $m\in\mathbb{N}$
\begin{align*}
    \sup_{z\in U\setminus\{p\}} \abs{ \frac{s\circ\phi(z)}{(z-p)^m}} < \infty.
\end{align*}
Clearly, if $z$ is far from $p$ this is satisfied.  The map $\phi$ is smooth at $p$ and so there exists a constant $C > 0$ so that near $\phi(p)$,  
\begin{align*}
   |\phi^{-1}(y) - p| \ge C|y - \phi(p)|.
\end{align*}
This gives that for $z$ near $p$
\begin{align*}
    \abs{ \frac{s\circ\phi(z)}{(z-p)^m}} &= \abs{ \frac{s(y)}{(\phi^{-1}(y)-p)^m}} \\
    &\le   \frac{|s(y)|}{C|y-\phi(p)|^{m}},
\end{align*}
which is bounded independently of $y$, since $s \in \mathcal{S}(V\setminus \{\phi(p)\})$.  
So for any $z \in U\setminus \{p\}$,
\begin{align*}
     \sup_{z\in U\setminus\{p\}} \abs{ \frac{s\circ\phi(z)}{(z-p)^m}} < \infty.
\end{align*}

The proof for the other direction is exactly the same.
\end{proof}

\section{Schwartz-Koebe Theorem for finitely many quasidisc holes}\label{section-koebeschwartz}
\begin{theorem}\label{schwartzkoebe}
Let $U \subset \bC$ be a domain whose boundary consists of connected components that are either points or quasicircles.  Additionally, assume that there are at most countably many points, at most finitely many quasicircles and no more than one quasicircle is unbounded.  Then $\mathcal{S}(U) \cong \mathcal{S}(V)$, where $V$ is a circle domain, i.e., a domain whose connected components of its boundary are all circles or points.
\end{theorem}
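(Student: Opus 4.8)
The plan is to combine Koebe's theorem (Theorem~\ref{thm-koebe}) for the finitely many quasicircle components with the removability results for countable point sets (Corollary~\ref{cor-points-are-removable}), and then to transfer the Schwartz equivalence through the resulting conformal map using the quasisymmetric extension theorem for multiply-connected domains (Theorem~\ref{thm-qsextensionmultiplyconnected}) together with the machinery of Section~\ref{section-on-bounded-quasidiscs}. First I would let $K \subset \partial U$ be the (at most countable, closed in $\bC$) union of the point-components of the boundary, and set $U^{+} := U \cup K$, so that $\partial U^{+}$ consists only of the finitely many quasicircle components. By Theorem~\ref{thm-koebe} there is a conformal map $\Phi$ from $U^{+}$ onto a circle domain $W$; composing with a M\"obius transformation (which by Lemma~\ref{lemma-mobiuspreservesschwartz} does not affect Schwartz equivalence) I may normalize so that the unbounded quasicircle component, if present, is sent to a circle rather than escaping to $\infty$ — or, conversely, arrange whichever finiteness/boundedness normalization makes the distortion estimates below cleanest. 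The image of $K$ under $\Phi$ is a countable set $K' \subset W$ closed in $\bC$, and restricting $\Phi$ gives a conformal map $U = U^{+}\setminus K \to W \setminus K' =: V$, which is the candidate circle domain for $U$ (a circle domain minus countably many points is still a circle domain in the sense of the theorem, since removed points are point-components).

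Next I would establish the analytic estimates needed to push Schwartz functions across $\Phi$. By Theorem~\ref{thm-qsextensionmultiplyconnected}, $\Phi$ (on $U^{+}$, after the M\"obius normalization making all boundary components bounded quasicircles) extends to a quasisymmetry of $\bC$, hence by Mori's theorem (Theorem~\ref{thm-mori}) $\Phi$ and $\Phi^{-1}$ are bi-H\"older on $\bC$. Then Lemma~\ref{lemma-distortion-of-distance} and Lemma~\ref{lemma-on-fast-decaying} apply verbatim, and Lemma~\ref{lemma-on-bounded-derivatives} (applied locally near each bounded boundary component, exactly as in the proof of Theorem~\ref{thm-unbouded-domains-whose-boundaries-are-bounded-quasicircles}) gives the derivative bounds \eqref{eq-derivbound} and \eqref{eq-derivboundinv} for $\Phi$ near $\partial U^{+}$; if one component is unbounded I handle the behavior at $\infty$ by the same $\phi(z) = a_0 + a_1 z + O(|z|^{-1})$ Taylor-expansion argument used there. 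This yields, by the chain-rule/Leibniz argument of Theorem~\ref{conformalbiholderbetweenbounded}, that $\Phi$ induces an isomorphism $\mathcal{S}(U^{+}) \cong \mathcal{S}(W)$.

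Finally I would pass from $U^{+}$ to $U = U^{+}\setminus K$ and from $W$ to $V = W \setminus K'$. The single-point-removal lemma at the end of Section~\ref{chapter-complements-to-a-quasiarc} shows that if a diffeomorphism induces a Schwartz isomorphism then so does its restriction after deleting one point and its image; I would iterate/adapt this to the countable closed set $K$. The cleanest route: it suffices, by Proposition~\ref{prop-characterization-of-schwartz-functions} and Lemma~\ref{lemma-enough-to-pull-functions-to-get-F-iso}, to show that for $s \in \mathcal{S}(V)$ the pullback $s \circ \Phi$ lies in $\mathcal{S}(U)$, i.e.\ that $s\circ\Phi$ (already known to be in $\mathcal{S}(U^{+})$) has, for every $m$, $\sup_{z \in U} |s(\Phi(z))| / \operatorname{dist}(z, \bR^2\setminus U)^m < \infty$; since $\bR^2\setminus U = (\bR^2\setminus U^{+}) \cup K$ and $\Phi$ is a local diffeomorphism near each point of $K$, near a point $p \in K$ one has $\operatorname{dist}(z,\bR^2\setminus U) \asymp \min(\operatorname{dist}(z,\partial U^{+}), |z-p|)$ and $|\Phi(z) - \Phi(p)| \gtrsim |z-p|$, so the estimate reduces to the already-established bound together with $s \in \mathcal{S}(V)$ controlling decay near $\Phi(p) \in K'$; uniformity over the countably many points follows because away from a fixed neighborhood of $K \cup \partial U^{+}$ the function $s\circ\Phi$ is already rapidly decaying. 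The reverse direction is symmetric.

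I expect the main obstacle to be the last step: making the point-removal argument uniform over the full countable set $K$ simultaneously, rather than one point at a time, since the single-point lemma as stated does not immediately iterate to an infinite family. The resolution is to work directly with the distance-to-the-complement characterization of Proposition~\ref{prop-characterization-of-schwartz-functions} as sketched above, exploiting that $\Phi$ is bi-Lipschitz on a neighborhood of each $p \in K$ with constants that can be taken locally, and that only finitely many points of $K$ can lie in any given compact subset of $U^{+}$ at bounded distance from $\partial U^{+}$, so the "bad" behavior near $K$ is genuinely local and does not accumulate except onto $\partial U^{+}$, where it is already controlled.
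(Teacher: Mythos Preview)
Your overall strategy matches the paper's: reduce to the bounded case by a M\"obius transformation, produce the conformal map to a circle domain via Koebe, extend it to a global quasisymmetry of $\bC$ using Theorem~\ref{thm-qsextensionmultiplyconnected}, and deduce bi-H\"older from Mori's theorem. The paper applies Koebe directly to $U$ and then uses Corollary~\ref{cor-points-are-removable} to extend the resulting map across the point components before invoking Theorem~\ref{thm-qsextensionmultiplyconnected}; you instead apply Koebe to the filled-in domain $U^{+}$ first and restrict afterward. Either order works.

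Where you take an unnecessary detour is the final step. Once $\Phi$ extends to a quasisymmetry of $\bC$, its restriction $\Phi|_U\colon U\to V$ is itself a conformal bi-H\"older map between the bounded domains $U$ and $V$, and Theorem~\ref{conformalbiholderbetweenbounded} gives $\mathcal{S}(U)\cong\mathcal{S}(V)$ in one stroke. That theorem imposes no hypothesis on the structure of the boundaries---only that the map is conformal and bi-H\"older and the domains are bounded. Its proof (via Lemmas~\ref{lemma-distortion-of-distance}--\ref{lemma-on-bounded-derivatives}) compares $\operatorname{dist}(z,\bC\setminus U)$ to $\operatorname{dist}(\Phi(z),\bC\setminus V)$ using only the \emph{global} bi-H\"older estimate, so the countably many point components of $\partial U$ are handled automatically with a single uniform constant. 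Your ``main obstacle''---making the single-point removal lemma uniform over all of $K$---therefore never arises; there is no need to pass through $\mathcal{S}(U^{+})\cong\mathcal{S}(W)$ at all. The concern you raise, that local Lipschitz constants of $\Phi$ at points of $K$ might degenerate as those points accumulate on $\partial U^{+}$, is legitimate for a pointwise argument, but it is entirely absorbed by the global H\"older exponent coming from Mori's theorem. This is precisely how the paper concludes: after establishing the quasisymmetric extension it simply notes that $\phi$ is a conformal bi-H\"older map between bounded domains and invokes Theorem~\ref{conformalbiholderbetweenbounded}.
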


\begin{proof}
Let $Q$ be the set of connected components of $\partial U$ that are quasicircles.  If $Q=\emptyset$, then $U$ is a circle domain and there is nothing to prove. Thus we assume that $Q\neq\emptyset$. As $U$ is connected and $\partial U$ contains a quasicircle, $U$ is either contained in a quasidisc or its complement contains a quasidisc. In both cases $U$ is not dense in $\bC$ (see Remark \ref{remark-quasidisc-cannot-be-dense}). We choose a point $z_0\notin\overline U$ and by applying the M\"obius transformation $z\mapsto \frac{1}{z-z_0}$ on $U$ we get a bounded set that is Schwartz equivalent to $U$ (by Lemma \ref{lemma-mobiuspreservesschwartz}).
The bounded image will also satisfy the hypotheses of the theorem since there is at most one unbounded quasicircle in $Q$.  Thus, we may assume that $U$ is bounded.

By Theorem \ref{thm-koebe}, there exists a conformal map $\phi \colon U \to V$, where $V$ is a circle domain. By applying a M\"obius transformation, we may assume that $V$ is bounded.  We first claim that $\phi$ extends to a quasisymmetry from $\bC$ to $\bC$.

Let $\tilde U$ be the union of $U$ with all of its boundary connected components that are points.
By Corollary \ref{cor-points-are-removable}, $\phi$ extends to a conformal map from $\tilde U$ to $\tilde V$, where $\tilde V$ is the union of $V$ with all of its boundary connected components that are points.  The sets $\tilde U$ and $\tilde V$ satisfy the hypotheses of Theorem \ref{thm-qsextensionmultiplyconnected} and therefore $\phi$ extends to a quasisymmetry from $\bC$ to $\bC$. This implies, by Theorem \ref{thm-mori}, that $\phi$ is bi-H\"older. 

We conclude that $\phi$ is a conformal bi-H\"older map between the bounded domains $U$ and $V$, and so $\mathcal{S}(U)\cong\mathcal{S}(V)$ by Theorem \ref{conformalbiholderbetweenbounded}.
\end{proof}

\section{A simply-connected planar domain that is not Schwartz equivalent to the unit disc}\label{nazarov}The following construction gives two simply-connected proper open subsets of the plane that are not Schwartz equivalent.  They are $C^\infty$-diffeomorphic by the Riemann mapping theorem. This construction was relayed to the authors by F. Nazarov through M. Sodin.

Let $\mathbb D$ be the unit disc in the plane and let $$G:=\bigcup\limits_{n\in\mathbb{N}}\big(B(n,\frac{1}{n^2})\cup(n,n+1)\times(-\frac{e^{-e^{n^2}}}{2},\frac{e^{-e^{n^2}}}{2})\big),$$ i.e., the set $G$ consists of the union of discs centered at the natural numbers $n\in\mathbb{N}$ with decreasing radii $\frac{1}{n^2}$, connected with strips of width 1 and height $e^{-e^{n^2}}$ (see Figure 1 below).

    \begin{figure}[h]
    \centering
    \includegraphics[width=8cm]{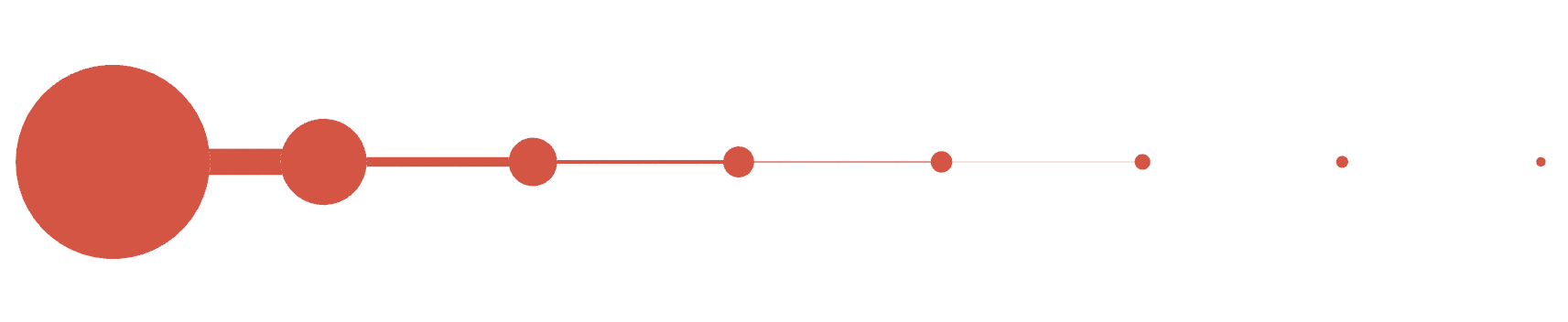}
    \caption{The set $G$}
    \end{figure}
    
We will show that $\mathcal{S}(\mathbb{D})\not\cong\mathcal{S}(G)$. Suppose towards a contradiction that there exists a diffeomorphism $\phi \colon \mathbb{D} \to G$ such that $\phi^{*}|_{\mathcal{S}(G)}:\mathcal{S}(G)\to \mathcal{S}(\mathbb{D})$ is an isomorphism.  Let $K_n := B(n, \frac{1}{2n^2})$
and $D_n := \phi^{-1}(K_n)$.

Let $\chi(z):\mathbb C\to\mathbb{R}$ be a smooth (bump) function such that $\chi$ restricted to the ball of radius $\frac{1}{2}$ around the origin equals 1, and $\chi$ equals 0 outside $\mathbb{D}$. Define \[
f(z):=\sum\limits_{n\in\mathbb{N}}e^{-n}\chi(n^2 (z-n)).
\]
One easily sees that $f\in\mathcal{S}(\bC)$. Moreover, $f$ is  supported in $G$ and is identically $0$ around any boundary point of $G$, and so we constructed  $f \in \mathcal S(G)$ such that $f$ restricted to $K_n$ equals to $e^{-n}$.
    
By the assumption on $\phi$, we have that $f \circ \phi\in\mathcal{S}(\mathbb{D})$, and so there exists $C\in\bR_{>0}$ such that 
\begin{align*}
    \sup_{z \in \mathbb{D}} \frac{|f(\phi(z))|}{d(z,\partial \mathbb{D})} \le C.
\end{align*}
In particular, for any $z\in D_n$ we have $e^{-n}\leq C d(z,\partial\mathbb{D})$, and so 
\begin{align*}
   d(D_n,\partial \mathbb{D})\geq C^{-1} e^{-n}.
\end{align*}
    
Consider the radial Schwartz function $g\in\mathcal{S}(\mathbb{D})$ given by 
\[
    g(z):=e^{\frac{1}{\abs{z}-1}}=e^{\frac{1}{-d(z,\partial \mathbb{D})}}.
\]
In particular, for $z\in D_n$ we have that $g(z)\geq e^{-C e^n}$. For every $n\in\mathbb{N}$, fix some $p_n\in D_n$ and define $\gamma_n$ to be the straight path connecting $p_n$ to $p_{n+1}$. As $g$ is radial we have that for any $n\in\mathbb{N}$, 
\[
    g(z)\geq e^{-C e^{n+1}},
\]
for $z \in \gamma_n$.
Since $\phi$ is a diffeomorphism and $\gamma_n$ is a continuous curve that connects $D_n$ to $D_{n+1}$, we have that $\phi(\gamma_n)$ is a continuous curve that connects $K_n$ to $K_{n+1}$. In particular there exists a point $w_n\in\phi(\gamma_{n})$ with $x$ coordinate $n+\frac{1}{2}$. Note that this point lies in a strip of height $e^{-e^{n^2}}$ in  $G$, and so $d(w_n,\partial G)\leq e^{-e^{n^2}}$. 

By our assumption, $g\circ \phi^{-1}$ is Schwartz and so there exists $C'\in\mathbb{R}_{>0}$ such that 
\[
    \sup_{w \in G} \frac{|g(\phi^{-1}(w))|}{d(w,\partial G)} \le C'
\]
and, in particular, 
\[
    g (\phi^{-1}(w_n)) \leq C'e^{-e^{n^2}}.
\]
On the other hand $\phi^{-1}(w_n)\in\gamma_n$ and so $$g(\phi^{-1}(w_n))\geq e^{-C e^{n+1}},$$which is a contradiction.
    
\begin{remark}The fact that $G$ is unbounded is not essential. There are two easy ways to construct a bounded simply-connected planar domain that is not Schwartz equivalent
to the unit disc using the construction of $G$ above. Firstly, the image of $G$ under the M\"obius transformation $z\mapsto\frac{1}{z-z_0}$ (where $z_0$ is any fixed point outside of $\overline G$) is a such a domain, as by Lemma \ref{lemma-mobiuspreservesschwartz} M\"obius transformations preserve the Schwartz space. Alternatively, one can define $G$ to be a countable union of discs of decreasing radii $\frac{1}{n^2}$, connected
with strips of width $\frac{10}{n^2}$ (instead
of width 1) and height $e^{-e^{n^2}}$. This set is also not Schwartz equivalent to $\mathbb{D}$ by essentially the same arguments.\end{remark}

\section{Planar cusp domains}\label{cusp_domains}As discussed in the Introduction,
any open simply-connected semi-algebraic planar domain is Schwartz equivalent to the unit disc.
In particular, the one sided infinite strip $$S: = \{(x,y) \in \mathbb{R}^2 : 1 < x < \infty, 0 < y < 1\}$$ and the one sided infinite strip with an \emph{algebraic (polynomial)} cusp at infinity $$\{(x,y) \in\mathbb{R}^2 | 1 < x < \infty, 0 < y < \frac{1}{x}\}$$ are such domains. In what follows we will show that the one sided infinite strip with an \emph{exponential} cusp at infinity $$\Omega
:= \{(u,v) \in \mathbb{R}^2 : 0 < u < \infty, 0 < v < e^{-p(u)}\},$$ where $p(u)$ is a non constant polynomial whose highest degree term has a positive coefficient, is Schwartz equivalent to the one sided infinite strip $S$, and by above also to the unit disc.

Indeed, there exists a constant $M > 0$ so that for all $u > M$, $p'(u)$ is positive and moreover $p(M)>1$. If $p$ is of degree 1 then it has the form $p(u)=\mu+\lambda u$ for some $\lambda>0$ and $\mu\in\mathbb{R}$ and in this case we take $M$ such that also $M>\frac{1}{\lambda}$. 
If $p$ is of degree at least 2 we take $M$ such that $p'(u) > 1$ for all $u > M$ and $M>1$. Note that either way $p$ is strictly
increasing whenever $u>M$.  Define the diffeomorphism $\phi \colon \Omega \to S$ as follows.
Let
\begin{align*}
    \phi_1(u,v) = \paren{\frac{e^{p(M)} - 1}{M} u + 1, ve^{p(u)}}
\end{align*}
and
\begin{align*}
    \phi_2(u,v) := (e^{p(u)},ve^{p(u)}).
\end{align*}
Let $\chi \colon \mathbb R \to \mathbb R$ be a  non-decreasing $C^\infty$-smooth function such that $\chi(u) = 1$ when $u \ge 2M$ and $\chi(u) = 0$ when $u \le M$.
Finally, let
\begin{align*}
    \phi(u,v) = (1-\chi(u))\phi_1(u,v) + \chi(u) \phi_2(u,v).
\end{align*}
Let us show that $\phi:\Omega\to S$ is a diffeomorphism. It is clear that $\phi(u,v)$ is smooth on $\Omega$ and one easily see that $\phi(\Omega)=S$.
So it suffices to show that $\phi$ is a homeomorphism.  
Additionally,  whenever $u \le M$, $\phi_1$ is a diffeomorphism and whenever $u \ge 2M$, $\phi_2$ is a diffeomorphism.  So we can consider only the range when $M < u < 2M$.
Since $\phi_1$ and $\phi_2$ agree in the second coordinate and $ve^{p(u)}$ always sends $(0,e^{-p(u)})$ to $(0,1)$ homeomorphically, it suffices to verify that the first coordinate of $\phi$ is a homeomorphism.
This follows if the derivative of $a(u) = (1-\chi(u))\paren{\frac{e^{p(M)} - 1}{M} u + 1} + \chi(u)e^{p(u)}$ is positive.  We calculate that
\begin{align*}
    a'(u) = -\chi'(u)\paren{\frac{e^{p(M)} - 1}{M} u + 1} + (1-\chi(u))\frac{e^{p(M)} - 1}{M} + \chi'(u) e^{p(u)} + \chi(u) e^{p(u)}p'(u).
\end{align*}
Note that $\chi'(u), \chi(u), (1-\chi(u))$ and $p'(u)$ are all non-negative for $M < u < 2M$ and so $a'(u)$ is positive if
\begin{align*}
    b(u) = e^{p(u)} - \paren{\frac{e^{p(M)} - 1}{M} u  + 1} > 0.
\end{align*}

Note that $b(M)=0$, so it is enough to show that for $M<u<2M$
$$b'(u) = e^{p(u)}p'(u) - \frac{e^{p(M)} - 1}{M}$$
is strictly positive. If $p$ is of degree 1 then we have $$b'(u)=\lambda e^{\mu+\lambda u}-\frac{e^{\mu+\lambda u}-1}{M}>(\lambda-\frac{1}{M})e^{\mu+\lambda u}+\frac{1}{M}>0$$ as $M>\frac{1}{\lambda}$. Otherwise, $p$ is of degree at least 2 and we have for $u > M$, $p'(u) > 1$ and $M>1$.  
So 
\begin{align*}
    b'(u) &= e^{p(u)}p'(u) - \frac{e^{p(M)} - 1}{M} \\
    & > e^{p(u)} - \frac{e^{p(M)} - 1}{M} > 0.
\end{align*}

Therefore $b(u)$ is strictly increasing for $M<u<2M$ and hence is positive.
This shows that $\phi$ is a diffeomorphism.

Furthermore, on any bounded set $K \subset \Omega$, the definition for $\phi$ can be extended to a larger domain that includes $K$.  This gives that $\phi$ and its the derivatives on $\{(u,v) \in \mathbb R^2: 0 < u < 3M\} \cap \Omega$ are bounded uniformly.
The same can be said about $\phi^{-1}$ on $\{(x,y) \in \mathbb R^2 : 1 < x < e^{p(3M)}, 0 < y < 1\}$.

By Lemma \ref{lemma-enough-to-pull-functions-to-get-F-iso}, it suffices to show that both $\phi$ and $\phi^{-1}$ preserve
the Schwartz spaces via composition.  
We have that for $x>e^{p(2M)}$ (i.e., for $u > 2M$),
\begin{align*}
    \phi^{-1}(x,y) = \bigg (g(x),\frac{y}{x}\bigg),
\end{align*}
where $g(x) = p^{-1}(\log x)$.  Note that for any $r>0$ there exists $C_r>0$ such that whenever $x>e^{p(2M)}$,
\begin{align}\label{eq-goodboundforg}
    g(x) < C_r x^r.
\end{align}

We also record the following bounds on the
derivatives of $\phi$ and $\phi^{-1}$ whenever $u>2M$, or $x>e^{p(2M)}$ (below $k$ is an arbitrary multi-index):
    \begin{align*}
        |\phi^{(k)} (u,v)| &<e^{p(u)} q_k(u) \\
        |(\phi^{-1})^{(k)}(x,y)|&<C_k,
    \end{align*}
    where $C_k\in\bR$ is a positive constant and $q_k$ is a polynomial.
Note that there exists $C'_k,C''_{k}>0$ such that $\abs{q_{k}(x)}<C'_k+C''_k x^{deg(q_k)}$ for any $x>0$. 
   
Let $f \in \mathcal{S}(S)$.  To show that $f \circ \phi\in\mathcal{S}(\Omega)$,
we bound the term
\begin{align}\label{eq-ratiotoboundstripexample}
        \abs{\frac{f^{(k_1)}(\phi(u,v))|\phi^{(k_2)}(u,v)|^{k_3}}{d((u,v),\partial \Omega)^l}},
\end{align}
    where $k_1,k_2$ are appropriate multi-indices and $k_{3},l \in \mathbb{N}$.  

For $u\le 2M$, $\phi$ and $\phi^{-1}$ are $C^\infty$-smooth and bi-Lipschitz up to the boundary.   In particular $\phi$ is bi-H\"older.
So
\begin{align}\label{eq-boundforfonusmaller2m}
    \sup_{\{(u,v) \in \Omega, 0<u\leq2M\}}   \abs{\frac{f^{(k_1)}(\phi(u,v))|\phi^{(k_2)}(u,v)|^{k_3}}{d((u,v),\partial \Omega)^l}}
\end{align}
is bounded.
To see this, let $\rho\in C^\infty(\mathbb{R})$ be such that $\rho|_{(-\infty,e^{2.4M})}\equiv 1$ and $\rho|_{(e^{2.6M},+\infty)} \equiv 0$. Then, $f(x,y)\rho(x)\in\mathcal{S}(S\cap \{(x,y) \in \mathbb R^2
: 1< x < e^{p(3M)}\})$. Lemma \ref{lemma-on-fast-decaying}, applied to $\phi$ restricted to the set  $S \cap \{(x,y) \in \mathbb R^2 : 1< x < e^{p(3M)}\}$, gives a desired bound for \eqref{eq-boundforfonusmaller2m}.
Hence, it is left to bound \eqref{eq-ratiotoboundstripexample} whenever $u>2M$.

For $u > 2M$, by the bounds on the derivatives of $\phi$,
\begin{align*}
        \abs{\frac{f^{(k_1)}(\phi(u,v))|\phi^{(k_2)}(u,v)|^{k_3}}{d((u,v),\partial
\Omega)^l}} &\leq \abs{\frac{f^{(k_1)}(x,y)e^{k_3p(u)}q_{k_2}(u)^{k_3}}{\min(|v|,|v-e^{-p(u)}|)^l}}
\\
        &=\abs{\frac{f^{(k_1)}(x,y)x^{k_3}q_{k_2}(g(x))^{k_3}}{\min(|\frac{y}{x}|,|\frac{y}{x}-\frac{1}{x}|)^l}}
\\
        &=\abs{ \frac{f^{(k_1)}(x,y)x^{k_3+l}q_{k_2}(g(x))^{k_3}}{\min(|y|,|y-1|)^l}}
\\
        &=\abs{ \frac{f^{(k_1)}(x,y)x^{k_3+l}q_{k_2}(g(x))^{k_3}}{d((x,y),\partial S)^l}}.
\end{align*}
By the bound for $q_k$ and \eqref{eq-goodboundforg},
\begin{align*}
       \abs{ \frac{f^{(k_1)}(x,y)x^{k_3+l}q_{k_2}(g(x))^{k_3}}{d((x,y),\partial S)^l}} & \leq \abs{ \frac{f^{(k_1)}(x,y)x^{k_3+l}(C'_{k_2}+C''_{k_2}
(g(x))^{deg(q_{k_2})})^{k_3}}{d((x,y),\partial S)^l}} \\
        & \leq \abs{ \frac{f^{(k_1)}(x,y)x^{k_3+l}(C'_{k_2}+C''_{k_2}
(C_1 x)^{deg(q_{k_2})})^{k_3}}{d((x,y),\partial S)^l}}.
\end{align*}
The last term is uniformly bounded since $f \in \mathcal{S}(S)$.

We also need to show that $f \circ \phi$ has Schwartz-type decay at $\infty$.
 Let $l_1,l_2 ,k_{3}\in \mathbb{N}$ and $k_1,k_2$ be multi-indices. Then,   \begin{align*}
        \abs{u^{l_1}v^{l_2}f^{(k_1)}(\phi(u,v))|\phi^{(k_2)}(u,v)|^{k_3}} &\leq \abs{u^{l_1}
f^{(k_1)}(x,y) e^{k_3p(u)}q_{k_2}(g(x))^{k_3}} \\
        & = \abs{ g(x)^{l_1}x^{k_3}f^{(k_1)}(x,y)q_{k_2}(g(x))^{k_3}} \\
        & \leq \abs{C_{1}x^{k_{3}+l_1} f^{(k_1)}(x,y)(C'_{k_2}+C''_{k_2}
(C_1 x)^{deg(q_{k_2})})^{k_3}},
    \end{align*}
where the last inequality follows again by the bound for $q_k$ and \eqref{eq-goodboundforg}.  The resulting term is bounded since $f$ is Schwartz.
    
We now show the converse direction.  We denote $\psi = \phi^{-1}$.  Let
$f \in \mathcal S(\Omega)$. It suffices to consider only the case when $x>e^{p(2M)}$ (i.e., $u > 2M$), since the bounded case follows by Lemma \ref{lemma-on-fast-decaying} and the smoothness of $\phi^{-1}$ and its derivatives on bounded domains (as explained above).
Let $k_1,k_2$ be some multi indices, and let $k_{3},l \in \mathbb{N}$. 
Then,    
\begin{align*}
    \abs{\frac{f^{(k_1)}(\psi(x,y))|\psi^{(k_2)}(x,y)|^{k_3}}{d((x,y),\partial S)^l}} & \leq  \frac{C_{k_2}\abs{f^{(k_1)}(u,v)}}{\min(|y|,|1-y|)^l} \\
    &= \frac{C_{k_2}\abs{f^{(k_1)}(u,v)}}{\min(|ve^{p(u)}|,|1-ve^{p(u)}|)^l}\\
    &= \frac{C_{k_2}\abs{f^{(k_1)}(u,v)e^{-lp(u)}}}{\min(|v|,|e^{-p(u)}-v|)^l}\\
    &\le \frac{C_{k_2}\abs{f^{(k_1)}(u,v)}}{d((u,v),\partial \Omega)^l},
\end{align*}
which is bounded since $f$ is Schwartz.  We now consider the decay at
$\infty$.  Let $l_1,l_2 \in \mathbb N$, then
\begin{align*}
    \abs{x^{l_1}y^{l_2}f^{(k_1)}(\psi(x,y))\psi^{(k_2)}(x,y)} & \leq \abs{C_{k_2} e^{l_1p(u)}
f^{(k_1)}(u,v)}.
\end{align*}
Since $f$ is Schwartz, there exists $ C_{k_1}>0$ such that for any $(u,v) \in \Omega$,
\begin{align*}
    \abs{f^{(k_1)}(u,v)} &\leq  C_{k_1}  d((u,v),\partial \Omega)^{l_1 + 1} \\
    & \leq  C_{k_1} e^{-(l_1+1)p(u)}.
\end{align*}
Combining these inequalities,
\begin{align*}
    \abs{x^{l_1}y^{l_2}f^{(k_1)}(\psi(x,y))\psi^{(k_2)}(x,y)} \leq C_{k_2}  C_{k_1} e^{-p(u)},
\end{align*}
which is bounded on $\Omega$.  This completes our proof that $\mathcal S(\Omega) \cong \mathcal S(S)$.

\section{Countable unions of intervals (examples in $\bR^1$)}\label{line_examples}In this section we study the Schwartz equivalence of a few families of subsets of the real line, each of these subsets consists of a disjoint countable union of open intervals. The first two examples deal with sets of the form $N_{a}: = \bigcup\limits_{n
\in \mathbb{N}} (n,n+a(n))$, where $a:\mathbb{N}\to(0,1]$ is some function. The simplest example of such a set is  $N:= \bigcup\limits_{n
\in \mathbb{N}} (n,n+1)$, which is obtained by the constant function $a(n) = 1$. In the third example of this section we will show that $N$ and the the complement in the unit interval to the standard $\frac{1}{3}$-Cantor set are Schwartz equivalent.

\begin{example}\label{example_countable_union_equiv} The first example considers a countable union of intervals whose interval sizes do not shrink faster than polynomial speed.  More precisely, if there exist  $c,p,n_0 > 0$ such that $a(n), b(n) \ge c |n|^{-p}$ for all $n>n_0$, then $\mathcal{S}(N_a)\cong\mathcal{S}(N_b)$.

\

Indeed, let $\phi \colon N_a \to N_b$ map $(n,n+a(n))$ to $(n,n+b(n))$ by
$x \mapsto \frac{b(n)}{a(n)}(x-n) + n$, or explicitly
\[
\phi(x) = \frac{b(\floor
x)}{a(\floor x)}(x -\floor x) + \floor x,\quad \phi^{-1}(y) =\frac{a(\floor
y)}{b(\floor
y)}(y-\floor y) + \floor y.
\]
In particular, \begin{align*}
        \phi'(x) =\frac{b(\floor x)}{a(\floor x)}.
    \end{align*} 
   
Suppose that $ f \in \mathcal S(N_b)$. Let us show that $f\circ \phi
\in \mathcal S(N_a)$.
Clearly $f\circ \phi$ goes to zero with all its derivatives when approaching
any boundary point of $N_a$. We are left to show that $f\circ \phi$ goes
to zero with all its derivatives as $|x|\to\infty$, even after being multiplied
by any polynomial. We start with the following observation: For any $l_1\in\mathbb{N}\cup\{0\}$
and any $l_2,l_3,l_4\in\mathbb{Z}$, 
\begin{align}\label{eq-limoffiszero}
    \lim\limits_{|y|\to\infty}|f^{(l_1)}(y)\cdot
y^{l_2} (a(\floor y))^{l_3} (b(\floor y))^{l_4}|=0.
\end{align}
This follows immediately from the asymptotic bounds 
\begin{align*}
    a(\floor y)&<1 \quad\text{and}\quad
    a(\floor y)^{-1}\leq c^{-1}\floor y^{p}\leq c^{-1}y^{p},\\ 
    b(\floor y)&<1 \quad \text{and} \quad 
    b(\floor y)^{-1}\leq c^{-1}\floor y^{p}\leq c^{-1}y^{p},
\end{align*}
and from the fact that
$f$ is a Schwartz function on $N_b$. As $\phi'$ is locally constant, 
    \begin{align*}
        (f\circ \phi )^{(m)}(x) = f^{(m)}(\phi(x))\frac{b(\floor x)^m}{a(\floor
x)^m},
    \end{align*}
    for any $m\in\mathbb{N}\cup\{0\}$. Set $k\in\mathbb{N}\cup\{0\}$, and
write $y=\phi(x)$. We have $\floor x=\floor y$ and 
\begin{align*}
    |x^k(f\circ \phi) ^{(m)}(x)|&= \frac{|\phi^{-1}(\phi(x))|^k
b(\floor
x)^m|f^{(m)}(y)|}{a(\floor
x)^m} \\
&=\bigg |\frac{a(\floor y)}{b(\floor
y)}(y-\floor y) + \floor y\bigg|^{k}\frac{
b(\floor
y)^m|f^{(m)}(y)|}{a(\floor
y)^m}.
\end{align*}
By \eqref{eq-limoffiszero}, this term foes to $0$ as $|y|\to\infty$.  Hence $f \circ \phi \in \mathcal S(N_a)$.
A symmetric argument
shows that $f\circ \phi^{-1}
\in \mathcal S(N_b)$ for any  $ f \in \mathcal S(N_a)$ and so $\mathcal{S}(N_a)\cong\mathcal{S}(N_b)$ by Lemma \ref{lemma-enough-to-pull-functions-to-get-F-iso}.

\end{example}

\begin{example}\label{example_countable_union_non_equiv}The second example considers the case of a countable union of intervals where the interval sizes shrink faster than any polynomial.  In this setting the union of intervals is not Schwartz equivalent to a countable union of unit intervals.
More precisely, assume $a(n)$
is such that for all $p > 0$ there exists $n_p \in \mathbb{N}$ such that $a(n)
\le n^{-p}$ for any $n \ge n_p$.  Then, 
$N_a$ is not Schwartz equivalent to $N:=\bigcup\limits_{n
\in \mathbb{N}} (n,n+1)$.
The typical example to have in mind is the case when $a(n) = e^{-n}$.

\
    
    Indeed, let $\phi \colon N_a \to N$ be any diffeomorphism. Suppose that $\phi((n,n
+ a(n))) = (m,m+1)$.  Since $\phi$ is a diffeomorphism it induces a bijection
from $\mathbb{N}$ to $\mathbb{N}$, we call the bijection $m(n)$.
There exists an infinite subsequence $n_i \in \mathbb{N}$ so that $m(n_i)
\le n_i$.
    Indeed, assume towards contradiction that there exists $n^*\in\mathbb{N}$
such that $m(n)>n$ for any $n\geq n^*$. In particular $m(n)\neq n^*$ for
any $n\geq n^*$. As $m$ is a bijection on $\mathbb{N}$,  $m$ restricted to
$\{1,2,\dots,n^*-1\}$ is a bijection of $\{1,2,\dots,n^*-1\}$ and $m(n)\neq
n^*$ for any $n\in\{1,2,\dots n^*-1\}$. Thus there does not exist $n\in\mathbb{N}$
such that $m(n)=n^*$, which is a contradiction.

In the sequel let $m_i = m(n_i)$.
We construct $f \in \mathcal S(N)$ such that $f(m_i+\frac{1}{2}) = a(n_i)$.
 Let $\chi(x):\mathbb{R}\to\mathbb{R}$ be a smooth bump function supported
on $[-1/4,1/4]$ such that $\chi(0) = 1$.  
    For any $l\in\mathbb{N}\cup\{0\}$ denote $C_l:=\max\limits_{x\in\mathbb{R}}|\chi^{(l)}(x)|$.
Define
    \begin{align*}
        f(x) := \sum_{i \in \mathbb{N}} a(n_i) \chi(x-m_{i}-1/2).
    \end{align*}
    Clearly $f(x)$ goes to zero with all its derivatives when approaching
any boundary point of $N$ and we only need to verify the decay of $f$ near
$\infty$. Fix $k,l\in\mathbb{N}\cup\{0\}$. For $x \in (m_i,m_i+1)$ we have
\begin{align*}
    |x^kf^{(l)}(x) |&= |x^k a(n_i) \chi^{(l)}(x-m_{i}-1/2)| \\
    &\leq C_{l}|x^k a(n_i)|.
\end{align*}
However, as $m_i\leq n_i$ we also have $x<m_i+1\le n_i+1$ and so
\begin{align*}
    |x^kf^{(l)}(x)| < C_l(n_i+1)^k a(n_i).
\end{align*}
By the assumption on $a(n)$ this goes to $0$ as $n_i\to\infty$. 
Hence, $f \in \mathcal S(N)$.

We next show that $f\circ \phi \notin \mathcal S(N_a)$.
Define $y_i:=\phi^{-1}(m_i+\frac{1}{2})$.
The definition gives that $y_i\in(n_i,n_i+a(n_i)) $, $d(y_i,\partial N_a)<a(n_{i})$ and
\begin{align*}
    \frac{f \circ \phi (y_i)}{d(y_i,\partial N_a)^2}  &= \frac{a(n_i)}{d(y_i,\partial
N_a)^2} \\
    & \ge \frac{a(n_i)}{a(n_i)^2} \\ &= \frac{1}{a(n_i)} \to \infty
\end{align*}
as $i \to \infty$. However, $y_i\to\infty$ as $i\to\infty$ and we conclude
that $f \circ \phi \notin \mathcal S(N_a)$, i.e., $\phi$ does not induce a Schwartz
equivalence. As $\phi$ was an arbitrary diffeomorphism, we conclude that
 $N_a$ is not Schwartz equivalent to $N$.

\end{example}

\begin{example}\label{example_cantor}
In this final example, we show that the complement of the standard $\frac{1}{3}$-Cantor set is Schwartz equivalent to the countable union of unit intervals.
This provides an example in $\mathbb R$ of two sets whose boundaries have different Hausdorff dimensions that are Schwartz equivalent.
Let $C' = [0,1] \setminus C$, where $C$ is the standard
$\frac{1}{3}$-Cantor set.  Then, $C'$ is Schwartz equivalent to $N = \bigcup\limits_{n\in \mathbb{N}} (n,n+1)$.
    
Indeed, we enumerate the intervals of $C'$ so that $C'=\bigcup\limits_{n
\in \mathbb{N}} I_n$, $|I_n|\geq|I_m|$ whenever $n\leq m$, and the left endpoint
of $I_n$ is smaller then the left endpoint
of $I_m$ if both $|I_n|=|I_m|$ and $n<m$. This enumeration is unique,
for example, $I_1 = (1/3,2/3)$, $I_2 = (1/9,2/9)$ and $I_3 = (7/9,8/9)$.
In particular, there are $2^n$ intervals of length $3^{-(n+1)}$.

Let $a(n) := |I_n|$, and so $a(n) = 3^{-m}$ for some $m \in \mathbb N$.
Since there are $2^{m-1}$ intervals of length $3^{-m}$, we have that
$2^{m-1} \le n < 2^{m}$ and so
\begin{align*}
    m-1 \le \frac{\log n}{\log 2} < m,
\end{align*}
and
\begin{align*}
       3^{m} \le 3^{\frac{\log n}{\log 2}+1} =3 n^{\log 3/\log 2} <3 n^{2}.
    \end{align*}

Define $\phi \colon N \to C'$ as the linear map that sends $(n,n+1)$ to $I_n$.
We will show that $\phi$ induces a Schwartz equivalence between $C'$ and $N$.

Fix some $f \in \mathcal S(C')$. We first prove that $f\circ\phi\in\mathcal{S}(N)$.
Clearly $f\circ\phi$ goes to zero with all its derivatives when approaching
any boundary point of $N$ and we only need to verify the decay of $f\circ\phi$
and its derivatives at $\infty$.  Fix $k,l\in\mathbb{N}\cup\{0\}$. 
    We can express $I_n$ as $I_n = (a_n,a_n + 3^{-m})$
where $a_n$ is the left endpoint of $I_n$.
In this notation, $\phi(x) = 3^{-m}(x-n) +a_n$ for any $x \in (n,n+1)$. In particular,
$\phi'(x)=a(n)=3^{-m}$ for any $x \in (n,n+1)$.  Let $\phi(x) =
y$. Then $x=\phi^{-1}(y)=3^m(y-a_n)+n$ for any $y\in I_n$.
So
\begin{align*}
        x^l|(f\circ \phi)^{(k)}(x)| &=  3^{-mk}x^l |f^{(k)}(\phi(x))|\\
        &= 3^{-mk}(3^{m}(y-a_n) +n)^l|f^{(k)}(y)|.
    \end{align*}
    Recall $y \in (a_n,a_n + 3^{-m})$ and so $y-a_n<3^{-m}$ and  $d(y,\partial
C') \le 3^{-m}$.  Additionally, $n<2^m<3^{m}$ and so
    \begin{align*}
        x^l|(f\circ \phi)^{(k)}(x)| &\le 3^{-mk}(1+n)^l|f^{(k)}(y)| \\
        & <3^{-mk}(1+3^m)^lf^{(k)}(y)\\
     & \leq (1+3^m)^lf^{(k)}(y)\\
          & \leq (1+d(y,\partial
C')^{-1})^lf^{(k)}(y).
    \end{align*}
This term tends to $0$ as $y\to\partial C'$ since $f\in\mathcal{S}(C')$.

Now let $f \in \mathcal S(N)$ and let $\psi = \phi^{-1}$.  
We next prove that $f\circ\psi\in\mathcal{S}(C')$. As $C'$ is bounded it is enough to show
that for any $k,l\in\mathbb{N}\cup\{0\}$: $$\sup\limits_{y\in C'}\frac{|(f\circ
\psi)^{(k)}(y)|}{d(y,\partial C')^l}<\infty.$$ 

Let $x=\phi^{-1}(y)=3^m(y-a_n)+n$ for any $y\in I_n$.
We calculate that
    \begin{align*}
        d(x,\partial N) &= d( 3^m (y-a_n) + n, \partial N) \\
        & = \min(3^m(y-a_n), 1 - 3^m (y-a_n)) \\
        & = 3^m \min(y-a_n, 3^{-m} - (y-a_n)) \\
        &= 3^m d(y,\partial C').
    \end{align*} 

Fix $k,l\in\mathbb{N}\cup\{0\}$. If $y \in
I_n$, then $\psi'(y) = 3^m$, $2^{m-1} \le n < 2^{m}$, $d(\psi(y),\partial
N)= 3^m d(y,\partial C')$, and $3^m<3n^2<3\psi(y)^{2}$.
Combining these,
    \begin{align*}
        \frac{|(f\circ \psi)^{(k)}(y)|}{d(y,\partial C')^l} &= \frac{3^{mk}|f^{(k)}(\psi(y))|}{d(y,\partial
C')^l} \\
        &= \frac{3^{mk+ml}|f^{(k)}((\psi(y))|}{d(\psi(y),\partial N)^l}\\
         &\leq  \frac{(3\psi(y)^{2})^{k+l}|f^{(k)}((\psi(y))|}{d(\psi(y),\partial
N)^l}= \frac{(3x^{2})^{k+l}|f^{(k)}(x)|}{d(x,\partial
N)^l},
    \end{align*}
which is bounded since $f\in\mathcal{S}(N)$.

\end{example}


\begin{thebibliography}{MMM}


\bibitem[AB50]{ahlforsbeurling} L.~V.~Ahlfors, A.~Beurling, {\em Conformal invariants and function-theoretic null-sets},
Acta Math., 83, (1950), 101--129.


\bibitem[AG08]{AG} A.~Aizenbud, D.~Gourevitch, {\em Schwartz functions on Nash manifolds},
International Mathematics Research Notices (2008), DOI:10.1093/imrn/rnm155.

\bibitem[Ah63]{ahlfors63} L.~V.~Ahlfors, {\em Quasiconformal Reflections},
Acta Math. {\bf 109}, (1963), 291--301.

\bibitem[Ah06]{ahlfors} L.~V.~Ahlfors, {\em Lectures on Quasiconformal Mappings,}
2nd Edition, American Mathematical Society, Providence, RI, 2006.

\bibitem[AIM09]{AIM} K.~Astala, T.~Iwaniec,G.~Martin, {\em Elliptic partial
differential equations and quasiconformal
              mappings in the plane}, Princeton University Press, Princeton,
NJ, 2009.

\bibitem[Ar70]{arthur} J.~G.~Arthur, {\em Harmonic analysis of tempered distributions on semisimple
Lie groups of real rank one},
Thesis, Yale, 1970.

\bibitem[Ar75]{arthur2} J.~G.~Arthur, {\em A theorem on the Schwartz space of a reductive Lie group},
Proc. Nat. Acad. Sci. USA, Vol. 72, No. 12, pp. 4718--4719, December 1975.



\bibitem[BP82]{BP} J.~Becker, Ch.~Pommerenke, {\em H\"{o}lder continuity of conformal mappings and nonquasiconformal
              {J}ordan curves}, Comment. Math. Helv.{\bf57},
     (1982), 221--225,
     10.1007/BF02565858.


\bibitem[C89]{casselman} W.~Casselman, {\em Introduction to the Schwartz space
of $\Gamma\backslash G$}, Canadian Journal of Mathematics XL (1989), no. 2, 285--320.

\bibitem[CHM00]{casselmanhechtmili} W.~Casselman, H.~Hecht, D.~Mili$\check c $i\'c, {\em Bruhat filtrations and
Whittaker vectors for real groups}, The mathematical legacy of Harish-Chandra
(Baltimore, Md, 1998), 15190, Proc. Sympos. Pure Math., 68, Amer. Math. Soc., Providence, RI, 2000.

\bibitem[CG21]{CG} Y.~Choie, J.~R.~Getz, {\em Schubert Eisenstein series and Poisson summation for Schubert varieties},
arXiv:2107.01874v2.

\bibitem[dC91]{dC} F.~du Cloux, {\em Sur les repr\'esentations diff\'erentiables des groupes de Lie al\'egbriques},
Annales scientifiques de l'\'E.N.S , {\bf $4^e$ s\'erie, tome 24, $n^o$ 3} (1991) 257--318.

\bibitem[ES18]{ES} B.~Elazar, A.~Shaviv, {\em Schwartz functions on real algebraic
varieties}, Canadian Journal of Mathematics 70 (2018), no. 5, 1008-1037,
DOI:10.4153/CJM-2017-042-6.

\bibitem[Fa97]{Fa} K.~Falconer, {\em Fractal Geometry -- Mathematical Foundations and Applications},
John Wiley \& Sons Ltd (1997), ISBN:0-471-96777-7.

\bibitem[Fr98]{Fr} F.~G.~Friedlander, {\em Introduction to the theory of distributions} ($2^{nd}$ edition),
Cambridge University Press (1998), ISBN:0-521-64971-4.

\bibitem[Ge20]{Ge} J.~R.~Getz, {\em On triple product L-functions},
arXiv:1912.01405.

\bibitem[GeHL21]{GHL} J.~R.~Getz, C-H.~Hsu, S.~Leslie, {\em Harmonic analysis on certain spherical varieties},
arXiv:2103.10261.

\bibitem[GH12]{GH} F.~W.~Gehring, K.~Hag, {\em The ubiquitous quasidisk}, American Mathematical Society, Providence, RI, (2012).


\bibitem[GSS19]{GSS} D.~Gourevitch, S.~Sahi, E.~Sayag, {\em Analytic Continuation of Equivariant Distributions},
International Mathematics Research Notices (2019), no. 23, pp. 7160--7192, DOI:10.1093/imrn/rnx326.

\bibitem[HC66]{harishchandra} Harish-Chandra, {\em Discrete series for semisimple
Lie groups. II. Explicit
determination of
the characters},
Acta Math. {\bf 116}, (1966), 1--111.


\bibitem[HS93]{HS93} Z.-X.~He, O.~Schramm, {\em Fixed points, Koebe uniformization and circle packings}, Ann. Math. (2) {\bf 137} (1993), 369--406.

\bibitem[K1918]{Koebe} P.~Koebe, {\em Abhandlungen zur {T}heorie der konformen {A}bbildung}, Math. Z., {\bf 2}, (1918),198--236.

\bibitem[LV73]{lehtovirtanen} O.~Lehto, K.~I.~Virtanen, {\em Quasiconformal mappings in the plane}, Springer-Verlag, New York-Heidelberg, 1973.

\bibitem[NP85]{NP} R.~N\"{a}kki, B.~Palka, {\em Hyperbolic geometry and {H}\"{o}lder continuity of conformal mappings}, Ann. Acad. Sci. Fenn. Ser. A I Math., 10 (1985), 433--444.

\bibitem[Po91]{Po} Ch.~Pommerenke, {\em Boundary Behavior of Conformal Maps},
Springer-Verlag Berlin Heidelberg (1991), ISBN 978-3-642-08129-3.

\bibitem[Sa16]{sakellaridis} Y.~Sakellaridis, {\em The Schwartz space of a smooth semi-algebraic stack}, Sel. Math. (N.S.) {\bf22}(4), 2401--2490 (2016), DOI:10.1007/s00029-016-0285-3.

\bibitem[Sc51]{schwartz} L.~Schwartz, {\em Th\'eorie des distributions},
l'Institut de MathÃ©matique de l'UniversitÃ© de Strasbourg, nos. 9
and 10 ; Actualit\'es Scientifiques et Industrielles, nos. 1091 and
1122. Vol. I, 1950, 148 pp. Vol. II, 1951, 169 pp.


\bibitem[Sha20]{Shaviv} A.~Shaviv, {\em Tempered Distributions and Schwartz
functions on definable manifolds},
J. Funct. Anal. {\bf 278} (2020) 108471, DOI: 10.1016/j.jfa.2020.108471.

\bibitem[Shi83]{Shi1} M.~Shiota, {\em Classification of Nash Manifolds},
Annales de l'Institut Fourier, {\bf tome 33, $n^o$ 3} (1983), 209--232.

\end{thebibliography}
\end{document}